\newcommand{\NN}{\mathbb{N}}
\newcommand{\QQ}{\mathbb{Q}}
\newcommand{\RR}{\mathbb{R}}
\newcommand{\ZZ}{\mathbb{Z}}
\newcommand{\1}{\mathbf{1}}
\newcommand{\acts}{\mathrel{\hbox{\large$\looparrowright$}}}
\newcommand{\agood}{A_{\text{\normalfont good}}}
\newcommand{\dleaf}{D^\leaf}
\newcommand{\fund}{\mathcal{F}}
\newcommand{\halfspace}{\mathcal{H}}
\newcommand{\leaf}{\text{\upshape leaf}}
\newcommand{\Lie}[1]{\mathfrak{\lowercase{#1}}}
\newcommand{\notacts}{\mathrel{\hbox{\lower1pt\hbox to 0pt{\hskip6pt$\pmb\not$\hss}\ensuremath\acts}}}
\newcommand{\open}{\mathcal{O}}
\newcommand{\tends}{\stackrel{n \to \infty}{\longrightarrow}}
\newcommand{\weyl}{\mathcal{W}}
\DeclareMathOperator{\Homeo}{Homeo}
\DeclareMathOperator{\prob}{Prob}
\DeclareMathOperator{\PSL}{PSL}
\DeclareMathOperator{\rank}{rank}
\DeclareMathOperator{\SL}{SL}
\newcommand\bigset[2]{\left\{\, #1 
 \mathrel{\left| \vphantom {\left\{ #1 \mid #2 \right\} }
 \right.} #2 \,\right\} }
\newcommand{\ul}{\underline}
\newenvironment{thmref}[1]{\begingroup}{\endgroup}
\newcommand{\csee}[1]{(see~\cref{#1})}
\newcommand{\fullcsee}[2]{(see~\fullcref{#1}{#2})}
\newcommand{\pref}[1]{\textup{(\ref{#1})}}
\newcommand{\fullcref}[2]{\cref{#1}\pref{#1-#2}}
\newcommand{\fullref}[2]{\ref{#1}\pref{#1-#2}}
\numberwithin{equation}{section}
\newtheorem{cor}[equation]{Corollary}
\newtheorem{key}[equation]{Key Proposition}
\newtheorem{lem}[equation]{Lemma}
\newtheorem{prop}[equation]{Proposition}
\newtheorem{thm}[equation]{Theorem}
\crefname{cor}{corollary}{corollaries}
\crefname{key}{proposition}{propositions}
\crefname{lem}{lemma}{lemmas}
\crefname{prop}{proposition}{propositions}
\crefname{thm}{theorem}{theorems}
\Crefname{cor}{Corollary}{Corollaries}
\Crefname{key}{Proposition}{Propositions}
\Crefname{lem}{Lemma}{Lemmas}
\Crefname{prop}{Proposition}{Propositions}
\Crefname{thm}{Theorem}{Theorems}
\theoremstyle{definition}
\newtheorem*{ack}{Acknowledgments}
\newtheorem{assump}[equation]{Assumption}
\newtheorem{defn}[equation]{Definition}
\newtheorem{eg}[equation]{Example}
\newtheorem{notation}[equation]{Notation}
\newtheorem{note}[equation]{Note}
\newtheorem{recall}[equation]{Reminder}
\newtheorem{rem}[equation]{Remark}
\newtheorem{warn}[equation]{Warning}
\newcounter{case}
\newenvironment{case}{\par\medbreak\noindent\refstepcounter{case}\textbf{Case \thecase.}\unskip\em}{\unskip}
\newcounter{caseholder}
\numberwithin{case}{caseholder}
\renewcommand{\thecase}{\arabic{case}}
\crefname{case}{case}{cases}
\Crefname{case}{Case}{Cases}
\newcounter{subfigure}
\numberwithin{subfigure}{figure}
\def\@listI{\leftmargin\leftmargini \parsep\z@skip
  \topsep\listisep \itemsep\smallskipamount 
  \listparindent\normalparindent}
\newcommand{\noprelistbreak}{\smallskip\@nobreaktrue\nopagebreak} 
\let\oldsection=\section
\renewcommand{\section}{\penalty-1000\oldsection}
\def\l@subsection{\@tocline{2}{0pt}{4em}{2.5pc}{}}
\def\subsection{\@startsection{subsection}{2}%
  \z@{.5\linespacing\@plus.7\linespacing\penalty-500}{-.5em}%
  {\normalfont\bfseries\mathversion{bold}}}
\begin{document}

\title[Non-left-orderability of lattices in semisimple Lie groups]{Non-left-orderability of lattices 
\\ in higher-rank semisimple Lie groups \\
(after Deroin and Hurtado)}

\author{Dave Witte Morris}

\address{Department of Mathematics and Computer Science,
University of Lethbridge, 
Lethbridge, Alberta, T1K~3M4, Canada}

\email{dmorris@deductivepress.ca, https://deductivepress.ca/dmorris}

{\mathversion{bold}
\maketitle
}

\begin{abstract}
Let $G$ be a connected, semisimple, real Lie group with finite centre, with $\rank_\RR G \ge 2$.
B.\,Deroin and S.\,Hurtado recently proved the 30-year-old conjecture that no irreducible lattice in~$G$ has a left-invariant total order. (Equivalently, they proved that no such lattice has a nontrivial, orientation-preserving action on the real line.) We will explain many of the main ideas of the proof, by using them to prove the analogous result for lattices in $p$-adic semisimple groups. The $p$-adic case is easier, because some of the technical issues do not arise.
\end{abstract}

\tableofcontents

\section{Introduction}

\begin{defn}[{\cite[p.~7]{DeroinNavasRivasBook}}]
A group~$\Gamma$ is \emph{left-orderable} if it has a left-invariant total order. This means there is a transitive binary relation~$\prec$, such that, for all $a,b,c \in \Gamma$, 
	\begin{itemize}
	\item exactly one of the following is true: $a \prec b$, or $a = b$, or $a \succ b$,
	and
	\item if $a \prec b$, then $ca \prec cb$.
	\end{itemize}
\end{defn}

Understanding which groups are left-orderable is a longstanding problem in algebra \cite{KopytovMedvedevBook,MuraRhemtullaBook} and is also important in topology \cite{ClayRolfsenBook}. The following result proves a conjecture that was suggested informally (at least for the case where $G$ is simple) by the current author more than 30 years ago, and became official when it was proposed independently by \'E.\,Ghys \cite[p.~200]{Ghys-ActRes} in 1999.

\begin{thm}[Deroin-Hurtado {\cite[Thm.~1.3]{DeroinHurtado}}] \label{DeroinHurtadoThm}
If\/ $\Gamma$ is an irreducible lattice in a connected, semisimple, real Lie group~$G$ with finite centre, and\/ $\rank_\RR G \ge 2$, then $\Gamma$ is not left-orderable. 
\end{thm}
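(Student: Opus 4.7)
The plan is to suppose for contradiction that $\Gamma$ admits a non-trivial orientation-preserving action $\rho\colon\Gamma\to\Homeo_+(\RR)$---which for countable groups is equivalent to left-orderability---and to derive a contradiction from the higher rank of~$G$. First I would fix an admissible symmetric probability measure~$\mu$ on~$\Gamma$ whose support generates~$\Gamma$. By Furstenberg's boundary theory, the Poisson--Furstenberg boundary of $(\Gamma,\mu)$ may be identified with $G/P$ (for a minimal parabolic $P\subseteq G$), equipped with a quasi-invariant measure~$\nu$, and every $\mu$-stationary compact $\Gamma$-space admits a $\Gamma$-equivariant measurable boundary map from $G/P$.

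Second, I would extend $\rho$ to the two-point compactification $\overline{\RR}=[-\infty,+\infty]$ and take a weak-$*$ limit of the Ces\`aro averages $\tfrac{1}{n}\sum_{k=0}^{n-1}\mu^{*k}*\delta_0$ to obtain a $\mu$-stationary probability measure $\eta\in\prob(\overline{\RR})$ and hence a $\Gamma$-equivariant measurable map $\varphi\colon G/P\to\prob(\overline{\RR})$. The key task is to upgrade $\varphi$ to a map into $\overline{\RR}$ itself. Following the Deroin--Kleptsyn--Navas dichotomy, either the action already has a global fixed point in $\overline{\RR}$ (which leads either to a proper invariant open sub-interval pinning the action, or to a non-trivial real-valued quasimorphism on~$\Gamma$, both of which contradict Margulis's normal-subgroup theorem together with the vanishing of $H^1(\Gamma,\RR)$), or the stationary measure is essentially unique and atomless, and after reparametrizing $\RR$ by its cumulative distribution the action becomes minimal. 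Selecting, say, the leftmost point of the essential support of $\varphi(\xi)$ then yields a $\Gamma$-equivariant measurable map $\psi\colon G/P\to\overline{\RR}$.

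The conclusion is pure rigidity. By Margulis's factor theorem, any $\Gamma$-equivariant measurable quotient of $G/P$ must be of the form $G/Q$ for some parabolic $Q\supseteq P$, and one checks that no such quotient admits a $\Gamma$-equivariant measurable map to the linearly ordered space~$\overline{\RR}$ unless it is a single point. Hence $\psi$ is essentially constant, which pushes us back into the fixed-point case already ruled out. The hard part---and the genuinely new content in Deroin--Hurtado---is the reparametrization step: in the real setting one must carefully control the regularity of the action so that a Zimmer-type cocycle superrigidity theorem can be applied to the derivative (or a suitable translation) cocycle, and this is where higher rank of~$G$ really enters. In the $p$-adic setting treated in the rest of this paper, ultrametric geometry allows one to replace this reparametrization by an argument at the level of compact open subgroups, which is the simplification the author exploits.
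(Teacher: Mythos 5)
Your outline follows the classical boundary-map strategy (stationary measure on a compactification, Furstenberg boundary $G/P$, Margulis factor theorem, cohomological vanishing), which is essentially the route that works for circle actions but is known to break down for the line --- and the write-up does not overcome the known obstruction. Concretely: in $\overline{\RR}=[-\infty,+\infty]$ the two endpoints are \emph{global fixed points} of any orientation-preserving action of~$\Gamma$ on~$\RR$, so $\delta_{\pm\infty}$ are $\mu$-stationary, and the Ces\`aro limit $\eta$ you extract may perfectly well be supported on $\{\pm\infty\}$; in that case $\varphi$ and $\psi$ are a.e.\ constant with value a fixed point and yield no information. For the same reason your dichotomy collapses: ``the action has a global fixed point in $\overline{\RR}$'' is \emph{always} true and is not contradictory, and the claims that this branch produces a nontrivial quasimorphism (and that this is killed by $H^1(\Gamma,\RR)=0$ --- quasimorphisms live in bounded cohomology, not $H^1$), and that no factor $G/Q$ admits a nonconstant equivariant map to $\overline{\RR}$, are asserted rather than proved; even granting the latter, an essentially constant $\psi$ lands you back in the harmless fixed-point case, not in a contradiction. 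Finally, the proposed ``reparametrize and apply Zimmer cocycle superrigidity to the derivative cocycle'' step is not available: the action is only by homeomorphisms, there is no derivative cocycle and no integrability, and this is precisely why the problem stood open for thirty years.

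The paper's (i.e.\ Deroin--Hurtado's) actual argument is entirely different and none of its ingredients appear in your proposal. One replaces $\overline{\RR}$ by the almost-periodic compactification~$Z$ \csee{AlmPerSpace}, a compact space carrying a free $\RR$-flow on which $\Gamma$ acts \emph{without} global fixed points, by Lipschitz maps along orbits and with zero mean displacement; one then induces to $X=(G\times Z)/\Gamma$, obtains a $\mu_G$-stationary measure $\mu_X$ and the $P$-invariant measure $\mu_X^P$, proves a negative leafwise Lyapunov exponent forces contraction of entire $\RR$-orbits \csee{keyprop}, and uses this plus $\Rrank G\ge 2$ to propagate $P$-invariance of $\mu_X^P$ through centralizers $C_G(a)$ and adjacent Weyl chambers until $\mu_X^P$ is $G$-invariant \csee{propagating}, contradicting \cref{NotInvt}. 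If you want to pursue your own route, the two points you must repair are (i) producing a stationary structure that cannot degenerate onto the fixed points at infinity --- this is exactly what the space $Z$ accomplishes --- and (ii) replacing the appeal to cocycle superrigidity by an argument that makes sense for actions that are merely by homeomorphisms.
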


\Cref{DeroinHurtadoThm} can be restated as follows:

\begin{cor}[Deroin-Hurtado {\cite[Thm.~1.3]{DeroinHurtado}}] \label{NoActR}
Assume\/ $\Gamma$ is an irreducible lattice in a connected, semisimple, real Lie group~$G$ with finite centre, and\/ $\rank_\RR G \ge 2$. Then\/ $\Gamma$ has no nontrivial action on the real line by orientation-preserving homeomorphisms.
\end{cor}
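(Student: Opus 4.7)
The plan is to deduce the corollary from \Cref{DeroinHurtadoThm} using two further ingredients beyond that theorem itself: Margulis's normal subgroup theorem, and the classical equivalence, for countable groups, between being left-orderable and admitting a faithful action on~$\RR$ by orientation-preserving homeomorphisms. I would argue by contradiction: suppose $\phi\colon\Gamma \to \Homeo_+(\RR)$ is a nontrivial homomorphism, and set $N = \ker\phi \lneq \Gamma$.

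First I would apply Margulis's normal subgroup theorem, which is available since $\Gamma$ is an irreducible lattice with $\Rrank G \ge 2$: every normal subgroup of~$\Gamma$ is either finite and central in~$G$ or of finite index. The finite-index alternative is impossible, because then $\Gamma/N \iso \phi(\Gamma)$ would be a nontrivial finite subgroup of $\Homeo_+(\RR)$, whereas any $f \in \Homeo_+(\RR)$ with $f(x) \ne x$ has $\{f^n(x)\}$ strictly monotone in~$n$ and hence infinite order. So $N$ is finite and contained in $Z(G)$. Then $G/N$ is again a connected semisimple real Lie group with finite centre and real rank at least~$2$, and $\Gamma/N$ is an irreducible lattice in it, so \Cref{DeroinHurtadoThm} applies and tells us $\Gamma/N$ is not left-orderable.

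The final step is to contradict this by constructing a left-invariant total order on the countable group $\Gamma/N$ directly from its faithful orientation-preserving action on~$\RR$. I would fix a countable dense subset $\{y_1, y_2, \dots\} \subseteq \RR$ and, for $g, h \in \Gamma/N$, declare $g \prec h$ iff at the least index~$n$ with $g y_n \ne h y_n$ one has $g y_n < h y_n$. Trichotomy uses faithfulness together with the observation that two continuous maps of~$\RR$ agreeing on the dense set $\{y_n\}$ are equal; left-invariance is immediate since every element of $\Gamma/N$ preserves the standard order on~$\RR$; transitivity reduces to a brief case analysis on the relative order of the first-differing indices of the two pairs involved. The main technical step, and essentially the only place a genuine verification is needed, is this dynamic realization; the Lie-theoretic bookkeeping that $\Gamma/N \subseteq G/N$ inherits the hypotheses of the theorem is routine, and no further serious obstacle should arise.
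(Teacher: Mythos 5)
Your proposal is correct and follows essentially the same route as the paper: take the kernel $N$ of the action, invoke the Margulis Normal Subgroup Theorem to see that $N$ is finite and central, and then contradict \cref{DeroinHurtadoThm} by noting that $\Gamma/N$ is a left-orderable irreducible lattice in $G/N$. The only difference is cosmetic: where the paper cites references for the facts that a faithful orientation-preserving action on~$\RR$ yields a left order and that finite groups are not left-orderable, you prove them directly (via the countable-dense-subset order and the absence of torsion in $\Homeo_+(\RR)$), which is a standard and correct substitution.
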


\begin{proof}
Suppose $\Gamma$ has a nontrivial, orientation-preserving action on~$\RR$, and let $N$ be the kernel of the action. Then $\Gamma/N$ has a faithful, orientation-preserving action on~$\RR$, so it is not difficult to see that $\Gamma/N$ is left-orderable \cite[Thm.~6.8]{Ghys-ActOnCircle}.

On the other hand, since no finite groups are left-orderable \cite[\S1.4.1, p.~27]{DeroinNavasRivasBook}, we know that $N$ is a normal subgroup of infinite index, so the Margulis Normal Subgroup Theorem \cite[Thm.~17.1.1, p.~347]{Morris-ArithGrpsBook} tells us that $N$ is contained in the centre of~$G$. So $N$ is normal in~$G$. Then $\Gamma/N$ is a left-orderable, irreducible lattice in the semisimple Lie group $G/N$. This contradicts \cref{DeroinHurtadoThm}.
\end{proof}

\begin{rem} \label{VirtTrivR}
If we do not require the homeomorphisms to be orientation-preserving, then there could be a nontrivial action: the map $x \mapsto -x$ is a homeomorphism of order~$2$, and it is possible that $\Gamma$ has a nontrivial homomorphism to~$\ZZ/2\ZZ$. However, it follows from \cref{DeroinHurtadoThm} that every action of~$\Gamma$ on~$\RR$ (whether orientation-preserving or not) must be \emph{virtually trivial}. This means that some finite-index subgroup of~$\Gamma$ acts trivially. (In fact, the subgroup will have index $1$ or~$2$.)
\end{rem}

By combining \cref{DeroinHurtadoThm} with the following result, Deroin and Hurtado also obtained a strong result for actions on the circle~$S^1$, not only actions on~$\RR$.

\begin{thm}[Ghys \cite{Ghys-ActRes}, cf.\ Burger-Monod {\cite[Cor.~1.5]{BurgerMonod-BddCoho}, \cite[Cor.~1.7]{Burger-criterion}}] \label{GhysBurgerMonod}
Assume\/ $\Gamma$ is an irreducible lattice in a connected, semisimple, real Lie group~$G$, and\/ $\rank_\RR G \ge 2$. If no almost-simple factor of~$G$ is locally isomorphic to\/ $\SL(2,\RR)$, then every action of\/~$\Gamma$ on the circle~$S^1$ has a finite orbit.
\end{thm}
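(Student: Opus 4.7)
The plan is to derive a contradiction from the hypothesis that some action $\rho \colon \Gamma \to \Homeo_+(S^1)$ has no finite orbit, by using the bounded Euler class together with Burger--Monod's description of bounded cohomology of higher-rank lattices. First I would reduce to the case that $\rho$ is minimal: if it is not, the unique minimal closed $\Gamma$-invariant subset of $S^1$ is a Cantor set $K$, and collapsing each connected component of $S^1 \setminus K$ to a point produces a semi-conjugate minimal action $\bar\rho$ that has a finite orbit iff $\rho$ does, so I may assume $\rho$ itself is minimal. Then I pull back the universal bounded Euler class on $\Homeo_+(S^1)$ to $e_b(\rho) \in H^2_b(\Gamma;\ZZ)$, and write $e_b^\RR(\rho)$ for its image in $H^2_b(\Gamma;\RR)$. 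By the Ghys--Matsumoto classification of circle actions up to semi-conjugacy, a minimal action is conjugate to an action by rotations iff it preserves a probability measure on~$S^1$, which in turn is equivalent to $e_b^\RR(\rho) = 0$; the problem thus reduces to proving $e_b^\RR(\rho) = 0$.

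For this vanishing I would invoke Burger--Monod: for an irreducible lattice $\Gamma$ in a higher-rank semisimple group~$G$, restriction induces an isometric isomorphism $H^2_{cb}(G;\RR) \to H^2_b(\Gamma;\RR)$, and $H^2_{cb}(G;\RR)$ splits as a direct sum over the almost-simple factors of~$G$. The hypothesis that no factor of $G$ is locally isomorphic to $\SL(2,\RR)$ enters precisely here: by Burger's extension criterion, combined with the explicit description of $H^2_{cb}$ via K\"ahler forms of the Hermitian symmetric factors, the image of $e_b^\RR(\rho)$ in $H^2_{cb}(G;\RR)$ must vanish, and hence $e_b^\RR(\rho) = 0$ in $H^2_b(\Gamma;\RR)$.

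It follows that $\rho$ is conjugate to an action by rotations, and therefore factors through the abelianization $\Gamma^{\mathrm{ab}}$. Since irreducible higher-rank lattices have finite abelianization (via Kazhdan's property~(T) or the Margulis Normal Subgroup Theorem), the rotation image is a finite subgroup of the circle group, so every orbit is finite, contradicting the standing assumption. The main obstacle is the Burger--Monod step: identifying which classes in $H^2_{cb}(G;\RR)$ can arise as bounded Euler classes of $S^1$-actions, and verifying that the no-$\SL(2,\RR)$-factor hypothesis is exactly what forces vanishing. This hypothesis is genuinely necessary, since lattices in $\SL(2,\RR)$ (e.g.\ surface groups) act minimally on~$S^1$ without a finite orbit and have nonzero bounded Euler class.
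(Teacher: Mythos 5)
The paper does not actually prove this statement: it is imported verbatim from the literature, with the proof attributed to Ghys \cite{Ghys-ActRes} (boundary-theoretic argument) and to Burger--Monod/Burger (bounded-cohomology argument), so there is no internal proof to compare against. Your sketch is an outline of the second, Burger--Monod route, and its overall architecture is sound: the trichotomy and collapse to a minimal action, the bounded Euler class $e_b^\RR(\rho)\in H^2_b(\Gamma;\RR)$, the Burger--Monod isometric isomorphism $H^2_{cb}(G;\RR)\cong H^2_b(\Gamma;\RR)$, and the final step that an invariant probability measure plus finiteness of $\Gamma^{\mathrm{ab}}$ yields a finite orbit (this last point is exactly how the paper argues in similar situations, via \fullcref{SSLattFacts}{abel}). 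Two small imprecisions are repairable: ``$e_b^\RR(\rho)=0$ iff there is an invariant probability measure'' needs the exact sequence $H^1(\Gamma;\RR/\ZZ)\to H^2_b(\Gamma;\ZZ)\to H^2_b(\Gamma;\RR)$ together with Ghys's semiconjugacy theorem, and the endgame is a contradiction with minimality (a finite rotation group cannot act minimally on an infinite circle) rather than literally ``$\rho$ is conjugate to rotations.''

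The genuine gap is the step you yourself flag as the main obstacle: the claim that the class in $H^2_{cb}(G;\RR)$ corresponding to $e_b^\RR(\rho)$ ``must vanish'' when no factor of $G$ is locally isomorphic to $\SL(2,\RR)$. This does not follow formally from the K\"ahler-class description of $H^2_{cb}(G;\RR)$: that space is nonzero whenever $G$ has Hermitian factors, e.g.\ for $G=\mathrm{SU}(2,1)\times\mathrm{SU}(2,1)$, which satisfies the hypotheses of the theorem, $H^2_{cb}(G;\RR)\cong\RR^2$, so vanishing cannot be read off from the description of the target. Nor does a norm count settle it, since scalar multiples of K\"ahler classes realize every norm. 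The real content is precisely Ghys's rigidity of the bounded Euler class under semiconjugacy, or Burger's extension criterion: a class of $H^2_b(\Gamma;\RR)$ that is a bounded Euler class and lies in the image of restriction from $H^2_{cb}(G;\RR)$ forces the action, up to semiconjugacy (and finite-index/finite-orbit corrections), to extend to a continuous action of $G$ on $S^1$, and such an action must factor through a quotient locally isomorphic to $\PSL(2,\RR)$ --- which the hypothesis excludes. As written, your argument defers exactly this point to the sources cited in the theorem's header, so as a blind proof it is incomplete at its crux, although as a roadmap of the cited proof it is accurate and in the same spirit as the paper's own reliance on those references.
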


\begin{cor}[Deroin-Hurtado \cite{DeroinHurtado}] \label{NoActCircle}
Assume\/ $\Gamma$ is an irreducible lattice in a connected, semisimple, real Lie group~$G$, and\/ $\rank_\RR G \ge 2$. If no almost-simple factor of~$G$ is locally isomorphic to\/ $\SL(2,\RR)$, then every action of\/ $\Gamma$ on the circle~$S^1$ is virtually trivial.
\end{cor}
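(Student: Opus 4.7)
The plan is to use \cref{GhysBurgerMonod} to produce a fixed point, cut the circle there to obtain an action on the line, and then apply \cref{NoActR}. The hypotheses of \cref{GhysBurgerMonod} are satisfied, so the action of~$\Gamma$ on~$S^1$ has a finite orbit~$F$. Let $\Gamma_1$ be the kernel of the induced action of~$\Gamma$ on~$F$; this is a finite-index (hence normal-of-finite-index, after further shrinking if desired) subgroup of~$\Gamma$ that fixes every point of~$F$. Pick any $p \in F$; then $\Gamma_1$ stabilizes $p$ and therefore acts on the open arc $S^1 \setminus \{p\}$, which is homeomorphic to~$\RR$.

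Next I would pass to the subgroup $\Gamma_2 \le \Gamma_1$ of elements that act on $S^1 \setminus \{p\} \cong \RR$ by orientation-preserving homeomorphisms; this has index at most~$2$ in~$\Gamma_1$, so $\Gamma_2$ has finite index in~$\Gamma$. Since finite-index subgroups of irreducible lattices are themselves irreducible lattices, $\Gamma_2$ is an irreducible lattice in the same group~$G$ with $\rank_\RR G \ge 2$, so \cref{NoActR} applies and tells us that the action of~$\Gamma_2$ on~$\RR$ is trivial.

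Unwinding, $\Gamma_2$ acts trivially on $S^1 \setminus \{p\}$, and since the action on $S^1$ is by homeomorphisms and fixes every point of a dense set, it fixes~$p$ too; thus $\Gamma_2$ acts trivially on all of~$S^1$. Because $\Gamma_2$ has finite index in~$\Gamma$, this means the original action of~$\Gamma$ on~$S^1$ is virtually trivial, as desired.

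There is essentially no hard step here: the corollary is a routine combination of the two quoted results, and the only point that requires a small check is the passage from the $\Gamma_1$-action on $S^1 \setminus \{p\}$ to an orientation-preserving action, which is handled by the index-$\le 2$ subgroup~$\Gamma_2$ (analogous to the remark after \cref{NoActR} that the finite-index subgroup in \cref{VirtTrivR} can be taken of index~$1$ or~$2$). The only conceptual content beyond \cref{GhysBurgerMonod} and \cref{NoActR} is the observation that finite-index subgroups of irreducible lattices remain irreducible lattices, so the hypotheses of \cref{NoActR} propagate to~$\Gamma_2$.
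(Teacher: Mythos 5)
Your proposal is correct and follows essentially the same route as the paper: apply \cref{GhysBurgerMonod} to get a finite orbit, pass to the finite-index stabilizer of a point in it, view the resulting action on $S^1 \smallsetminus \{p\} \simeq \RR$, and invoke the real-line result. The only cosmetic difference is that you handle orientation-reversing elements explicitly via the index-$\le 2$ subgroup $\Gamma_2$ and cite \cref{NoActR}, whereas the paper packages exactly that step into \cref{VirtTrivR}.
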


\begin{proof}
By \cref{GhysBurgerMonod}, we may let $x$ be a point whose orbit is finite. Then its stabilizer $\Gamma_{\!x}$ is a finite-index subgroup that acts on $S^1 \smallsetminus \{x\}$, which is homeomorphic to~$\RR$. Then we see from \cref{VirtTrivR} that a finite-index subgroup of~$\Gamma_{\!x}$ must act trivially.
\end{proof}

\begin{rem}
The group $\PSL(2,\RR)$ acts faithfully on $\RR \cup \{\infty\} \simeq S^1$ (by linear-fractional transformations). Therefore, if some almost-simple factor of~$G$ is locally isomorphic to $\SL(2,\RR)$, then every lattice in~$G$ has a highly nontrivial action on~$S^1$ (because there is a surjective homomorphism $G \to \PSL(2,\RR)$). This shows that \cref{NoActCircle} would not be valid without the hypothesis that no factor is locally isomorphic to $\SL(2,\RR)$.
\end{rem}

For simplicity, the statements of the above results assume that the centre of~$G$ is finite. There are counterexamples without this assumption, but Deroin and Hurtado proved that all of them arise from the fact that the action of $\PSL(2,\RR)$ on~$S^1$ lifts to an orientation-preserving action of $\widetilde\SL(2,\RR)$ (the universal cover of $\SL(2,\RR)$) on~$\RR$. In particular:

\begin{cor}[Deroin-Hurtado {\cite[Thm.~1.1 and \S2]{DeroinHurtado}}]
Assume $G$ is a connected, semisimple, real Lie group, and\/ $\rank_\RR G \ge 2$. 
An irreducible lattice~$\Gamma$ in~$G$ is left-orderable if and only if it is torsion-free and there is a surjective homomorphism $G \to \widetilde\SL(2,\RR)$.
\end{cor}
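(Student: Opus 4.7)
The backward direction ($\Leftarrow$) is relatively direct. The faithful action of $\PSL(2,\RR)$ on $S^1 \iso \RR \cup \{\infty\}$ lifts, through the universal covering $\RR \to S^1$, to an orientation-preserving action of $\widetilde\SL(2,\RR)$ on $\RR$ under which the generator of the infinite cyclic centre acts by a non-trivial integer translation (the deck transformation). This lifted action is faithful: its kernel is normal in $\widetilde\SL(2,\RR)$ and trivial on the centre, hence trivial. Precomposing with $\pi \colon G \to \widetilde\SL(2,\RR)$ and restricting to $\Gamma$ yields an orientation-preserving $\Gamma$-action on $\RR$ with kernel $\Gamma \cap \ker\pi$. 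Since $\ker\pi$ is a proper closed normal subgroup of the semisimple group $G$, irreducibility of $\Gamma$ combined with the Margulis Normal Subgroup Theorem forces $\Gamma \cap \ker\pi$ to be finite, and hence trivial because $\Gamma$ is torsion-free. A countable group admitting a faithful orientation-preserving action on $\RR$ is left-orderable, so $\Gamma$ is left-orderable.

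For ($\Rightarrow$), suppose $\Gamma$ is left-orderable. Torsion-freeness is an immediate consequence of the ordering axioms. The substantive step is to produce the surjection $G \to \widetilde\SL(2,\RR)$, and the plan is to reduce to \cref{DeroinHurtadoThm} by passing to the adjoint form. Set $Z = Z(G)$ and $G_{\text{ad}} = G/Z$, a connected semisimple Lie group with trivial centre and $\Rrank \ge 2$; the image $\Gamma_{\text{ad}} \iso \Gamma/(\Gamma \cap Z)$ is an irreducible lattice in $G_{\text{ad}}$. By \cref{DeroinHurtadoThm}, $\Gamma_{\text{ad}}$ is \emph{not} left-orderable; since $\Gamma$ itself is, the kernel $\Gamma \cap Z$ of the projection cannot be trivial, and (being torsion-free) must be infinite. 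Realising the left-ordering dynamically as a faithful orientation-preserving action $\rho \colon \Gamma \hookrightarrow \Homeo(\RR)$, each element of $\Gamma \cap Z$ commutes with $\rho(\Gamma)$; H\"older's theorem, applied on a suitable minimal invariant set, then shows that some non-trivial $z \in \Gamma \cap Z$ acts as a fixed-point-free translation of $\RR$.

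This translating central element is the bridge to the desired Lie-theoretic conclusion. Quotienting $\RR$ by $\langle \rho(z) \rangle$ yields $S^1$ and an induced action of $\Gamma_{\text{ad}}$ on $S^1$. If no almost-simple factor of $G_{\text{ad}}$ were locally isomorphic to $\SL(2,\RR)$, then by \cref{NoActCircle} this circle action would be virtually trivial; a finite-index subgroup of $\Gamma$ would therefore $\rho$-map into the cyclic translation group $\langle \rho(z) \rangle \iso \ZZ$, but the Margulis Normal Subgroup Theorem (together with a dimension count) rules out non-trivial homomorphisms from higher-rank lattices to $\ZZ$, contradicting the faithfulness of $\rho$. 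Hence some almost-simple factor of $G_{\text{ad}}$ is locally isomorphic to $\SL(2,\RR)$; the existence of a translating (infinite-order) central element $z$ projecting to the centre of the corresponding factor of $G$ forces that factor to be the full universal cover $\widetilde\SL(2,\RR)$ (any proper quotient of $\widetilde\SL(2,\RR)$ has finite centre), and projection onto it gives the required surjection. The principal obstacle is this last step, which rigorously promotes the dynamical statement ``a central element translates $\RR$'' to the algebraic statement ``a factor of $G$ is $\widetilde\SL(2,\RR)$ with compatible projection''; it depends on the fine analysis of central extensions and of real-line actions of higher-rank lattices that is the technical heart of Deroin-Hurtado's main theorem.
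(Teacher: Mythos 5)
The paper states this corollary without proof (it simply cites Deroin--Hurtado, Thm.~1.1 and \S2), so your argument must stand on its own, and as written it has a genuine gap in each direction. In the backward direction, the step ``irreducibility plus the Margulis Normal Subgroup Theorem forces $\Gamma \cap \ker\pi$ to be finite, hence trivial'' is false: the NST in that form requires $G$ to have finite centre, whereas the existence of a surjection $G \to \widetilde\SL(2,\RR)$ forces $G$ to split off a direct factor $\widetilde\SL(2,\RR)$ and hence to have infinite centre. Concretely, if $\Gamma$ is the full preimage in $G = \widetilde\SL(2,\RR) \times \widetilde\SL(2,\RR)$ of a torsion-free irreducible lattice in $\SL(2,\RR) \times \SL(2,\RR)$, then $\Gamma$ is torsion-free and satisfies the hypotheses, but $\Gamma \cap \ker\pi$ contains the infinite cyclic centre of the first factor, so the action through $\pi$ is not faithful; your argument breaks down on exactly the lattices this corollary is about. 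The correct route is to note that (applying the NST in $G/Z(G)$) the kernel $N = \Gamma \cap \ker\pi$ is torsion-free and virtually central, hence abelian, hence left-orderable, while $\Gamma/N$ embeds in $\widetilde\SL(2,\RR)$ and is therefore left-orderable; one then uses the standard fact that an extension of a left-orderable group by a left-orderable group is left-orderable.

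In the forward direction, the reduction to \cref{DeroinHurtadoThm} to get $\Gamma \cap Z(G) \ne 1$ is fine, but the rest does not close. First, it is not automatic that some nontrivial element of $\Gamma \cap Z(G)$ acts on $\RR$ without fixed points: H\"older's theorem applies to \emph{free} actions, and ruling out that every central element has fixed points (the fixed-point sets are closed $\Gamma$-invariant sets, but that alone is not a contradiction) already requires an argument. More seriously, the two facts you do extract --- some infinite-order central element $z$ acts as a translation, and some almost-simple factor of $G/Z(G)$ is locally isomorphic to $\SL(2,\RR)$ --- do not formally imply the existence of a surjection $G \to \widetilde\SL(2,\RR)$: they do not identify \emph{which} factor of $G$ carries $z$, nor why that factor is the full universal cover. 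Nothing you have said excludes the possibility that the locally-$\SL(2,\RR)$ factor is $\PSL(2,\RR)$ while $z$ lies in the centre of a different Hermitian factor (say one covering $\mathrm{Sp}(4,\RR)$), in which case no surjection onto $\widetilde\SL(2,\RR)$ exists. Closing this requires the rigidity of the circle action (Ghys/Burger--Monod, i.e.\ the full strength behind \cref{GhysBurgerMonod}, not merely the finite-orbit statement used in \cref{NoActCircle}): the action of $\Gamma$ on $\RR/\langle \rho(z)\rangle$ is semiconjugate to the projective action coming from an $\SL(2,\RR)$-factor, and matching the resulting central extension by $\langle z\rangle$ with the Euler class of that factor is what ties $z$ to it and shows the factor of $G$ is $\widetilde\SL(2,\RR)$. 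You acknowledge this step is missing, but it is precisely the content of the statement, so the proposal does not yet prove the forward direction.
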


\begin{rem}
\Cref{DeroinHurtadoThm} is a huge advance over what was previously known. In particular:
	\begin{enumerate}
	\item There was not a single example of a cocompact, torsion-free lattice~$\Gamma$ (in a connected, semisimple Lie group), for which it was known that no finite-index subgroup of~$\Gamma$ is left-orderable.
	\item For the noncocompact case, the theorem had been proved under the additional assumption that either $\rank_\QQ \Gamma \ge 2$ \cite{Witte-Circle} or $G$ has more than one noncompact simple factor \cite{LifschitzMorris}. However, the theorem was not known to be true even for the special case where $G = \SL(3,\RR)$.
	\end{enumerate}
See \cite{Ghys-ActOnCircle,Morris-CanLattAct} for expository discussions of actions of lattices (and related groups) on~$\RR$ or~$S^1$, and see \cite[Chap.~4]{DeroinNavasRivasBook} for an introduction to the viewpoint on (non-)left-orderability that is used in the proof of \cref{DeroinHurtadoThm}.
\end{rem}

\begin{rem}
The circle is a $1$-dimensional manifold, so the Deroin-Hurtado Theorem is a part of the ``Zimmer program\rlap,'' which includes the study of actions of lattices on low-dimensional manifolds. (However, unlike in \cref{DeroinHurtadoThm}, the actions are usually assumed to be differentiable, and are often~$C^\infty$.) The recent work of A.\,Brown, D.\,Fisher, and S.\,Hurtado \cite{BrownFisherHurtado-SL(mZ),BrownFisherHurtado-subexp,BrownFisherHurtado-nonuniform} is a huge advance in the area (and seems to have been part of the inspiration for Deroin and Hurtado). See \cite{Fisher-ActOnMfld} for a survey of the Zimmer program, and see \cite{Cantat-BFH,Fisher-afterward} for expositions of the first Brown-Fisher-Hurtado paper.
\end{rem}

In this paper, we will explain some of the main ideas in the proof of \cref{DeroinHurtadoThm} (or, equivalently, \cref{NoActR}). However, $p$-adic groups are easier than real ones, so we will actually prove the following:

\begin{thm}[Deroin-Hurtado] \label{padicThm}
If\/ $\Gamma$ is an irreducible lattice in a $p$-adic semisimple Lie group~$G$, and\/ $\rank_{\QQ_p} G \ge 2$, then\/ $\Gamma$ has no nontrivial action on the real line by orientation-preserving homeomorphisms. 
\end{thm}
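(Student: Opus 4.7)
The plan is to assume for contradiction that $\Gamma$ has a nontrivial orientation-preserving action on $\RR$, and to derive a contradiction by coupling a random walk on $\Gamma$ (driven by a measure pushed down from $G$) with Margulis-style boundary rigidity. First, I would reduce to a faithful action: let $N \trianglelefteq \Gamma$ be the kernel. Exactly as in the proof of \cref{NoActR}, $\Gamma/N$ is left-orderable, so $N$ has infinite index in $\Gamma$; the local-field version of the Margulis Normal Subgroup Theorem then forces $N$ to be central in $G$. Passing to $G/N$ and $\Gamma/N$, I may assume the action is faithful, and in particular $\Gamma$ is left-orderable.

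The second step is to manufacture a stationary probability measure attached to the action. Pick a symmetric, compactly supported, generating probability measure $\mu$ on $G$, and consider a compact $\Gamma$-space $X$ built from the action on $\RR$. The natural candidate is a compactification of the space of nonatomic, fully supported Radon probability measures on $\RR$ (``good measures''), topologised so that pushforward by elements of $\Gamma$ is continuous. By Kakutani-type compactness there is a $\mu$-stationary Borel probability measure $\nu$ on $X$. The critical and most delicate technical point---where the rank-$2$ hypothesis first enters seriously---is to show that $\nu$ cannot escape to the degenerate locus of $X$ (Dirac masses, or measures supported on proper subintervals of $\RR$), so that $\nu$ is concentrated on genuine good measures.

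With $\nu$ in hand, the third step is to invoke boundary theory. If $\mu$ is chosen so that $G/P$ (with its harmonic measure) is a $\mu$-boundary of the induced random walk, where $P$ is a minimal parabolic of $G$ and $G/P$ is the totally disconnected space of chambers at infinity of the Bruhat-Tits building $\BT{G}$, then $\nu$ disintegrates to give a measurable $\Gamma$-equivariant boundary map $\phi \colon G/P \to X$. Margulis's intermediate factor theorem, available for lattices in higher-rank semisimple groups over any local field, then forces $\phi$ to factor through a proper parabolic quotient $G/Q$ with $Q \supsetneq P$. The final contradiction follows because the stabiliser in $\Homeo^+(\RR)$ of a good measure is the translation group $\RR$: any $G$-equivariant assignment of good measures to points of $G/Q$ produces a nontrivial continuous homomorphism from a Levi of $Q$ (which, by higher rank, contains a noncompact almost-simple factor) into $\RR$, and this is incompatible with $G$ being semisimple over $\QQ_p$, hence having no nontrivial continuous real characters. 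The main obstacle is the middle step---keeping $\nu$ concentrated on good measures rather than on the boundary of $X$; the $p$-adic setting is easier than the real case treated by Deroin and Hurtado precisely because the totally disconnected structure of $G/P$ and the combinatorial rigidity of $\BT{G}$ let one argue chamber-by-chamber instead of needing delicate continuous dynamical arguments on~$\RR$.
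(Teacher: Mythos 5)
The crux of your argument is the step you yourself flag as ``critical and most delicate'': showing that the stationary measure $\nu$ on your compactified space of measures does not concentrate on the degenerate locus. You give no mechanism for this, and in fact this is exactly where the classical boundary-theory strategy (Ghys, Witte--Zimmer, Burger--Monod style) breaks down for actions on the \emph{line}, as opposed to the circle. Because $\RR$ is noncompact and every orientation-preserving action of $\Gamma$ fixes the ends, any reasonable compactification of the space of nonatomic, fully supported Radon measures contains $\Gamma$-fixed degenerate points (``measures at infinity''), and the constant map to such a point is always a $\Gamma$-equivariant boundary map. Nothing in your setup rules out that the Furstenberg boundary map $\phi\colon G/P \to \prob(X)$ is exactly this trivial map, so no contradiction can be extracted; moreover a genuinely faithful, fixed-point-free action of a left-orderable group typically admits \emph{no} stationary probability measure on $\RR$ at all (only an infinite harmonic Radon measure), so there is no a priori reason for $\nu$ to charge the ``good'' locus. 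There is also a secondary slip: the stabiliser in $\Homeo^+(\RR)$ of a nonatomic, fully supported \emph{probability} measure is trivial, not the translation group $\RR$; the translation picture is correct only for infinite Lebesgue-like Radon measures, which again cannot be reached by a stationary-probability-measure argument on a compactification.

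The paper's proof avoids this obstruction by a completely different device: following Deroin--Hurtado, one first replaces the given action by a harmonic (``almost-periodic'') realization, producing a compact space $Z$ carrying a free $\RR$-flow whose orbits are $\Gamma$-invariant \csee{AlmPerSpace}, then induces to a compact $G$-space $X$, takes an $\RR$-invariant measure $\mu_Z$ and the resulting $\mu_G$-stationary measure $\mu_X$ together with its $P$-invariant counterpart $\mu_X^P$. The contradiction is not obtained from a factor theorem but from a dynamical Lyapunov-exponent argument along the $\RR$-orbits: the functional $\chi_P$, local and then global contraction \csee{keyprop}, invariance of $\mu_X^P$ under centralizers $C_G(a)$, and finally propagation of invariance across Weyl chambers (this is where higher rank enters) to conclude $\mu_X^P$ is $G$-invariant, contradicting \cref{NotInvt}, which rests on the finiteness of the abelianization of $\Gamma$. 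Your reduction to a faithful action via the Normal Subgroup Theorem matches the paper, but the core of your proposal leaves the decisive non-degeneracy step unproved, and it is precisely the step that the Deroin--Hurtado machinery was invented to replace.
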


Here is an outline of the paper.
	\begin{itemize}
	\item \Cref{PrelimSect} recalls some standard definitions and notation, and a few basic results. (The reader is encouraged to skip this \lcnamecref{PrelimSect}, and refer back when necessary.) 
	\item \Cref{OverviewSect} sketches the proof of \cref{padicThm}. 
	\item Further explanation of several arguments can be found in \cref{DeroinHurtadoPfSect}. Some details are omitted, but specific references to the original work of Deroin and Hurtado \cite{DeroinHurtado} are provided.
	\item Our exposition of the proof of \cref{padicThm} relies on the simplifying assumption that $G = K \Gamma$ \csee{G=KGamma}, but \cref{FiniteSetSect} provides a brief introduction to the main tool that allows this assumption to be eliminated (namely, the theory of harmonic functions).
	\end{itemize}

\begin{note}
This is an expository paper. 
None of the nontrivial ideas or results are new, so the author does not claim credit for any of them, even if no reference is given. In particular, although \cref{padicThm} is not stated in~\cite{DeroinHurtado}, it follows directly from the methods there, so it can be considered to be implicit in the paper and should be attributed to Deroin and Hurtado.
 
On the other hand, the author takes responsibility for any errors and other deficiencies in the manuscript (of course).
\end{note}

\begin{ack}
I thank B.\,Deroin, S.\,Hurtado, and J.\,L\'ecureux for informative discussions related to the proof of \cref{DeroinHurtadoThm} and possible generalizations. I also acknowledge support of the Institut Henri Poincar\'e (UAR 839 CNRS-Sorbonne Universit\'e) and LabEx CARMIN (ANR-10-LABX-59-01).
\end{ack}

\section{Preliminaries} \label{PrelimSect}

\subsection{Measure theory}

\begin{recall} \label{RecallMeasureTheory} 
Let $\mu$ be a measure on a separable, metrizable topological space~$Y$. 
	\begin{enumerate}
	\item We say that $\mu$ is a  \emph{probability measure} if $\mu(Y)= 1$.
	\item The \emph{support} of~$\mu$ is the (unique) smallest closed set whose complement has measure~$0$.
	\item $\mu$ is \emph{atomic} if $\mu(B) = \sum_{b \in B} \mu \bigl( \{b\} \bigr)$ for every Borel set~$B$. Informally, this means that $\mu$ is a sum of point masses.
	\item A property is true \emph{for $\mu$-a.e.\ $y \in Y$} if the set of points where it is false has measure~$0$. In this case, we may also say that the property is true \emph{$\mu$-a.e.}, or that is is true for $\mu$-a.e.~$y \in Y$.
	\item A measure $\mu'$ is \emph{absolutely continuous} with respect to~$\mu$ if, for every Borel set~$B$, such that $\mu(B) = 0$, we also have $\mu'(B) = 0$.
	\item If $\mu'$ is absolutely continuous with respect to~$\mu$, then there is a non-negative measurable function $d\mu'/d\mu$ (called the \emph{Radon-Nikodym derivative} of~$\mu'$ with respect to~$\mu$), such that $\mu' = (d\mu'/d\mu) \, \mu$, in the sense that, for every real-valued continuous function with compact support, we have
		\[ \int_Y f \, d\mu' = \int_Y f(y) \, \frac{d\mu'}{d\mu}(y) \, d \mu(y) . \]
	The Radon-Nikodym derivative is unique up to a set of $\mu$-measure~$0$.
	\item If two measures are absolutely continuous with respect to each other, then they are in the same \emph{measure class}. (This is an equivalence relation.)
	\item  \label{RecallMeasureTheory-f*}
	If $f \colon Y \to Z$ is a continuous function to a topological space~$Z$, then we define a measure $f_*\mu$ on~$Z$ by
		\[ f_*\mu(B) = \mu \bigl( f^{-1}(B) \bigr) . \]
	\end{enumerate}
\end{recall}

\subsection{Topological groups}

\begin{recall}
Assume $H$ is a locally compact topological group.
\leavevmode
	\begin{itemize}
	\item $H$ is said to be \emph{locally compact} if it has a nonempty, open subset whose closure is compact. All groups in this paper (including $G$, $K$, $\Gamma$, and~$\RR$) are locally compact (and second countable).
	\item It is a basic fact in the theory of topological groups that every locally compact group has a \emph{Haar measure}. This is a measure $m_H$ on~$H$, such that:
		\begin{itemize}
		\item $m_H$ is left-invariant: $m_H(h B) = m_H(B)$ for all $h \in H$ and every Borel set~$B$,
		\item $m_H$ is locally finite: $m_H(C) < \infty$ for every compact set~$C$,
		and
		\item $m_H$ has full support: $m_H(U) > 0$ for every open set~$U$.
		\end{itemize}
	Furthermore, the Haar measure is unique, up to a scalar multiple: if $\mu$ is another Haar measure on~$H$, then there is a constant $c > 0$, such that $\mu(B) = c \, m_H(B)$ for every Borel set~$B$.
	\item $H$ is \emph{unimodular} if its Haar measure is bi-invariant. This means it is both left-invariant and right-invariant: $\mu_H(hB) = \mu_H(B) = \mu_H(Bh)$. Note that this implies that $\mu_H$ is invariant under conjugation: $\mu_H(h B h^{-1}) = \mu_H(B)$.
	\item If $H$ is either a semisimple Lie group or a discrete group or a compact group, then $H$ is unimodular.
	\item If $H$ is compact, then $m_H(H) < \infty$ (because the measure of every compact set is finite), and we usually choose the normalizing scalar to make $m_H$ a probability measure. 
	\item If $H$ is \ul{not} compact, then $m_H(H) = \infty$, so it is not possible for the Haar measure on a noncompact group to be a probability measure.
	\end{itemize}
\end{recall}

\begin{defn}
Assume $G$ is a unimodular topological group.
\noprelistbreak
	\begin{itemize}
	\item If $\Gamma$ is any discrete subgroup of~$G$, then any Haar measure $m_G$ induces a well-defined $G$-invariant measure $m_{G/\Gamma}$ on $G/\Gamma$ by setting:
		\[ m_{G/\Gamma}(B) = m_G \bigl( \pi^{-1}(B) \cap \fund \bigr) , \]
where $\pi \colon G \to G/\Gamma$ is the natural quotient map ($\pi(g) = g \Gamma$), and $\fund$ is a measurable set of coset representatives for~$\Gamma$ in~$G$ (or, in other words, $\fund$ is a \emph{fundamental domain} for $\Gamma$ in~$G$).
	\item A subgroup~$\Gamma$ of a locally compact topological group~$G$ is a \emph{lattice} in~$G$ if 
	$\Gamma$ is discrete, 
	and
	the quotient space $G/\Gamma$ has finite measure (i.e., $m_{G/\Gamma}(G/\Gamma) < \infty$).
	\item A lattice $\Gamma$ is \emph{irreducible} if $\Gamma N$ is dense in~$G$, for every closed, noncompact, normal subgroup~$N$ of~$G$.
	\item A closed subgroup~$\Gamma$ of~$G$ is \emph{cocompact} if $G/\Gamma$ is compact. 
	\end{itemize}
\end{defn}

Every discrete, cocompact subgroup is a lattice (because the Haar measure of every compact set is finite). The converse is not true in general, but the first part of the following result is a special case of the fact that it is true in $p$-adic groups:

\begin{thm} \label{SSLattFacts} 
Assume $\Gamma$ is an irreducible lattice in a $p$-adic semisimple Lie group~$G$, such that $\rank_{\QQ_p} G \ge 2$. Then:
	\begin{enumerate}
	\item  \label{SSLattFacts-padicCocpct}
	$\Gamma$ is cocompact \cite[Prop.~IX.3.7, p.~313]{MargulisBook},
	\item \label{SSLattFacts-abel}
	the commutator subgroup $[\Gamma, \Gamma]$ has finite index in~$\Gamma$ \cite[Thm.~IX.5.4, p.~325]{MargulisBook},
	and 
	\item \label{SSLattFacts-CircleAction}
	if~$\Gamma$ acts on the circle~$S^1$ by homeomorphisms, then the action has a finite orbit \cite[Cor.~6.10]{WitteZimmer-CircleBundle}.
	\end{enumerate}
\end{thm}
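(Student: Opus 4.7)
Each of the three assertions is cited as a black-box result from deep structure theory, so my plan is to indicate the main ingredient I would try to bring to bear on each, and where the real difficulty lies.

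For part (1), I would first apply Godement's compactness criterion, which reduces the question to showing that $\Gamma$ contains no nontrivial unipotent elements. To rule these out, I would invoke the Margulis arithmeticity theorem---available because $\rank_{\QQ_p} G \ge 2$---to realize $\Gamma$ up to commensurability as a group of $S$-integer points in an algebraic group defined over a global field, with the ambient group anisotropic at the places outside the distinguished $p$-adic one. A strong approximation / adelic argument then forces any unipotent element of such an $S$-integer group to be trivial. The main obstacle is arithmeticity itself, which rests essentially on the higher-rank hypothesis; the conclusion would simply fail in rank~$1$, where $p$-adic analogs of noncocompact lattices do exist.

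For part (2), the plan is to apply the Margulis normal subgroup theorem, which is available because $\Gamma$ is an irreducible lattice in a higher-rank semisimple group: every normal subgroup of~$\Gamma$ is either finite or of finite index. The commutator subgroup $[\Gamma,\Gamma]$ is normal, so it falls into one of these two cases. It cannot be finite, for otherwise $\Gamma$ would be virtually abelian, contradicting the fact that higher-rank lattices contain nonabelian free subgroups (Tits alternative). Hence $[\Gamma,\Gamma]$ has finite index in~$\Gamma$. Once the normal subgroup theorem is granted, this step is short; its hard input is the normal subgroup theorem.

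For part (3), the plan is a bounded-cohomology argument in the spirit of Ghys and Burger-Monod. A hypothetical action of~$\Gamma$ on~$S^1$ without any finite orbit would produce a nontrivial bounded Euler class in $H^2_b(\Gamma,\RR)$; one then shows that for an irreducible higher-rank lattice any such class must descend from a real factor of~$G$ locally isomorphic to $\SL(2,\RR)$. Since $G$ is a $p$-adic group, no such factor exists and the Euler class must vanish, forcing the action to have a finite orbit. The hard part is the control of $H^2_b(\Gamma,\RR)$ for $p$-adic lattices, which is precisely the content of the cited Witte-Zimmer paper, and this is the step I expect to be the genuine obstacle.
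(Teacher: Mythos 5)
The paper does not actually prove this theorem: all three parts are quoted from the literature (Margulis for (1) and (2), Witte--Zimmer for (3)), so the comparison is between your sketches and the standard derivations. Your part (2) is fine and is the usual deduction from the Normal Subgroup Theorem (the small extra step, that a finitely generated group with finite commutator subgroup is virtually abelian, is routine: the centralizer of $[\Gamma,\Gamma]$ has finite index and is centre-by-finite). Your part (3) is a genuinely different route from the cited one: Witte--Zimmer argue with amenability and cocycle (algebraic hull) techniques, not by controlling $H^2_b$; what you describe is the Ghys/Burger--Monod argument, and it does work for $p$-adic lattices because, by Burger--Monod, $H^2_b(\Gamma,\RR)$ is governed by $H^2_{cb}(G,\RR)$, which vanishes when $G$ has no factor locally isomorphic to $\SL(2,\RR)$, in particular for $p$-adic~$G$. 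However, one step of your sketch is stated incorrectly: an action with no finite orbit need not have nonzero \emph{real} bounded Euler class (an irrational rotation action of $\ZZ$ has vanishing real class and no finite orbit). Vanishing of the real class only tells you that the integral bounded Euler class comes from $H^1(\Gamma;\RR/\ZZ)$, i.e.\ from the rotation-number homomorphism; you then need the finite abelianization from part (2) to conclude that this homomorphism has finite image, so the class is torsion, the action is semiconjugate to an action by finite-order rotations, and a finite orbit exists. Without that ingredient the implication you assert is false for general groups.

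Part (1) is where there is a genuine error. Your closing claim, that the conclusion fails in rank one because noncocompact $p$-adic lattices exist there, is false in characteristic zero: \emph{every} lattice in a semisimple group over $\QQ_p$ is cocompact, with no rank hypothesis (Tamagawa); noncocompact lattices occur only over local fields of positive characteristic. Consequently, arithmeticity is a detour that misplaces where the content of (1) lies. The elementary reasons are: a nontrivial unipotent $u$ satisfies $u^{p^n} \to \1$, so a discrete subgroup of~$G$ contains no unipotent elements at all; and, more directly, if $K$ is a compact open subgroup of~$G$, then every $K$-orbit on $G/\Gamma$ is open with Haar measure bounded below (since finite subgroups of~$G$ have bounded order in characteristic zero), so finite covolume forces finitely many $K$-orbits and hence compactness of $G/\Gamma$. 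Your arithmetic route can be repaired in the higher-rank case at hand---compactness of the ambient group at all archimedean places forces the global form to be anisotropic over the number field, because a rational unipotent would be a nontrivial unipotent element of a compact group, and Godement's criterion then gives cocompactness; strong approximation is not the relevant tool---but as written the plan rests on a mistaken picture of what can happen in rank one.
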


\subsection{Group actions}

\begin{defn} \label{GrpActDefns}
Assume
	\begin{itemize}
	\item a topological group~$H$ acts by homeomorphisms on a metrizable topological space~$Y$, 
	\item $\mu_H$ is a probability measure on~$H$,
	and
	\item $\mu_Y$ is a probability measure on~$Y$.
	\end{itemize}
Then:
	\begin{enumerate}
	\item The action of~$H$ is \emph{free} if $h y \neq y$ for all nonidentity $h \in H$ and all $y \in Y$.
	\item The measure $ \mu_Y$ is \emph{$H$-invariant} if $h_* \mu_Y =  \mu_Y$ for all $h \in H$ (where $h_*\mu_Y$ was defined in \fullcref{RecallMeasureTheory}{f*}). In other words, $\mu_Y(hB) =  \mu_Y(B)$, for every $h \in H$ and every Borel set~$B$.
	\item An $H$-invariant measure~$\mu_Y$ is \emph{ergodic} if, for every $H$-invariant Borel set~$B$, either $\mu_Y(B) = 0$, or $\mu_Y(Y \setminus B) = 0$.
	\item The convolution $\mu_H * \mu_Y$ is the measure on~$Y$ defined by 
		\[ \text{$\mu_H * \mu_Y = \int_H h_* \mu_Y \, d\mu_H(H)$,
		\quad so \quad 
		$(\mu_H * \mu_Y)(B) = \int_H \mu_Y(h^{-1} B) \, d\mu_H(h)$} . \]
	\item \label{GrpActDefns-stationary}
	We say that $\mu_Y$ is \emph{$\mu_H$-stationary} if $\mu_H * \mu_Y = \mu_Y$. More concretely, for every Borel set~$B$, this means that we have
	\[ \mu_Y(B) = \int_H \mu_Y(h^{-1} B) \, d\mu_H(h) . \]
In other words, the measure of every set is equal to the average of the measures of its translates.

	\item If $\mu_Y$ is $H$-invariant, then there is a unique probability measure~$\eta$ on the space $\mathcal{E}$ of ergodic $H$-invariant measures on~$Y$, such that
	\[ \mu_Y = \int_{\mathcal{E}} \xi \, d\eta(\xi) . \]
(See \cite[Thm., p.~77]{Phelps-Choquet}, for example.)
Measures in the support of~$\eta$ are called \emph{ergodic components} of~$\mu_Y$. 
	\item A function $f \colon \RR \to \RR$ is \emph{Lipschitz} if there is a constant $C \in \RR$, such that
		\[ \text{$|f(x) - f(y)| \le C \, |x - y|$ \quad for all $x,y \in \RR$} . \]
	\end{enumerate}
\end{defn}

\subsection{Dynamical systems}

\begin{prop}[cf.\ {\cite[Cor.~4.2 and Thm.~4.4, pp.~98--99]{EinsiedlerWardBook}}] \label{ExistsRInvProb}
Assume $\RR$ acts continuously on a compact, metrizable space~$Y$. Then there is at least one\/ $\RR$-invariant probability measure on~$Y$ that is ergodic.
\end{prop}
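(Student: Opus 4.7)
The plan is to first construct an $\RR$-invariant probability measure on $Y$ by a time-averaging argument, and then to extract an ergodic one via the ergodic decomposition already recorded in the preliminaries (or, equivalently, via Krein-Milman).

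For the invariant-measure step, I would run the classical Krylov-Bogolyubov construction. Write $\phi_t \colon Y \to Y$ for the time-$t$ homeomorphism of the action, choose any probability measure $\mu_0$ on $Y$ (for instance a point mass $\delta_{y_0}$ for some $y_0 \in Y$), and form the time averages
\[ \mu_T \;=\; \frac{1}{T} \int_0^T (\phi_t)_* \mu_0 \, dt \qquad (T > 0). \]
Since $Y$ is compact and metrizable, the space $\prob(Y)$ of Borel probability measures on $Y$ is compact and metrizable in the weak-$*$ topology (by Banach-Alaoglu and the Riesz representation theorem), so some sequence $\mu_{T_n}$ with $T_n \to \infty$ converges weakly to a probability measure~$\mu$. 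For each fixed $s \in \RR$, a change of variables gives $(\phi_s)_* \mu_T - \mu_T = \frac{1}{T}\int_T^{T+s} (\phi_t)_* \mu_0 \, dt - \frac{1}{T}\int_0^s (\phi_t)_* \mu_0 \, dt$, a signed measure of total variation at most $2|s|/T$. Since $\nu \mapsto (\phi_s)_* \nu$ is weak-$*$ continuous (because $\phi_s$ is continuous on the compact space~$Y$), passing to the limit gives $(\phi_s)_* \mu = \mu$. As this holds for every $s \in \RR$, the measure $\mu$ is $\RR$-invariant.

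For the ergodicity step, I would apply the ergodic decomposition recalled in \cref{GrpActDefns} to the invariant measure $\mu$ just constructed: there is a probability measure $\eta$ on the space $\mathcal{E}$ of ergodic $\RR$-invariant probability measures on $Y$ such that $\mu = \int_{\mathcal{E}} \xi \, d\eta(\xi)$. Since $\mu$ is a probability measure, $\eta$ is nonzero, so its support is nonempty, and any element of the support is an ergodic $\RR$-invariant probability measure on~$Y$. (Alternatively, the set of $\RR$-invariant probability measures is a nonempty, convex, weak-$*$ compact subset of $\prob(Y)$, so Krein-Milman supplies an extreme point, and one checks by the standard conditional-measure argument that the extreme points are precisely the ergodic ones.)

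The only delicate point is the verification of $\RR$-invariance of the weak-$*$ limit in the first step. This is where one needs both the total-variation bound $\|(\phi_s)_* \mu_T - \mu_T\| \le 2|s|/T$, which is finite for each fixed~$s$, and the weak-$*$ continuity of $(\phi_s)_*$ coming from continuity of the action; everything else is formal.
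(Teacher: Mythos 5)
Your proof is correct, and it is essentially the argument the paper is pointing to: the paper offers no proof of \cref{ExistsRInvProb}, citing instead the standard treatment in Einsiedler--Ward, which is exactly your route of Krylov--Bogolyubov time averages to get an invariant measure followed by ergodic decomposition (equivalently, Krein--Milman applied to the compact convex set of invariant measures) to extract an ergodic one. Your adaptation to a flow is fine as written; note only that with a general initial measure $\mu_0$ one should invoke joint continuity of the action to make sense of $\mu_T$, a point that disappears if you take $\mu_0 = \delta_{y_0}$ as you suggest.
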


\begin{thm}[Pointwise Ergodic Theorem {\cite[Thm.~2.30, p.~44]{EinsiedlerWardBook}}] \label{PointwiseErgThm}
Assume a cyclic group $\langle g \rangle$ acts by homeomorphisms on a metrizable space~$Y$, and $\mu$ is an ergodic $\langle g \rangle$-invariant probability measure on~$Y$. Then, for any continuous function $f$ on~$Y$ with compact support, and $\mu$-a.e.\ $y \in Y$, we have
	\[ \lim_{n \to \infty} \frac{1}{n} \sum_{i=1}^n f(g^i y) = \int_Y f \, d\mu . \]
\end{thm}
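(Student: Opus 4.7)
The plan is to follow Birkhoff's classical strategy, which rests on the Maximal Ergodic Theorem. Write $A_n f(y) = \tfrac{1}{n} \sum_{i=1}^n f(g^i y)$ and set
$f^*(y) = \limsup_n A_n f(y)$, $f_*(y) = \liminf_n A_n f(y)$, so that $f_* \le f^*$ pointwise.

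My first step is to show that $f^*$ and $f_*$ are $g$-invariant everywhere. The telescoping identity
\[
A_n f(gy) - A_n f(y) = \tfrac{1}{n}\bigl(f(g^{n+1}y) - f(gy)\bigr) \longrightarrow 0
\]
uses only that $f$ is bounded, which is automatic since $f$ is continuous with compact support. By the ergodicity of~$\mu$, any $g$-invariant measurable function is constant $\mu$-a.e., so there exist constants $C_* \le C^*$ with $f^* \equiv C^*$ and $f_* \equiv C_*$ $\mu$-a.e. The theorem then reduces to showing $C_* = C^* = \int_Y f \, d\mu$.

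The crucial input is the Maximal Ergodic Theorem: for any $h \in L^1(\mu)$, if
\[
E_h := \Bigl\{\, y \in Y : \sup_{n \ge 1} \sum_{i=0}^{n-1} h(g^i y) > 0 \,\Bigr\} ,
\]
then $\int_{E_h} h \, d\mu \ge 0$. Granting this, I would apply the inequality to $h := f - (C^* - \epsilon)$ for arbitrary $\epsilon > 0$. Because $\limsup_n A_n f = C^*$ $\mu$-a.e.\ (and the two Birkhoff-average conventions differ only by $|f(g^n y) - f(y)|/n \to 0$), some partial sum of $h$ is positive for $\mu$-a.e.~$y$, so $E_h$ is $\mu$-conull. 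The inequality becomes $\int_Y f \, d\mu \ge C^* - \epsilon$; letting $\epsilon \to 0^+$ yields $\int_Y f \, d\mu \ge C^*$. The symmetric application with $h := (C_* + \epsilon) - f$ gives the reverse bound $\int_Y f \, d\mu \le C_*$. Combining with $C_* \le C^*$ forces $C^* = C_* = \int_Y f \, d\mu$.

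The principal obstacle is establishing the Maximal Ergodic Theorem. I would use A.\,Garsia's short proof: set
\[
F_N(y) := \max\!\Bigl( 0,\, h(y),\, h(y)+h(gy),\, \ldots,\, \sum_{i=0}^{N-1} h(g^i y) \Bigr) ,
\]
so $F_N \ge 0$ and $E_N := \{F_N > 0\} \nearrow E_h$. The recursion $\sum_{i=0}^{k-1} h(g^i y) = h(y) + \sum_{i=0}^{k-2} h(g^i \cdot gy)$ gives the pointwise bound $F_N(y) \le h(y) + F_N(gy)$. Integrating this over $E_N$, and combining $F_N \equiv 0$ off $E_N$ with the $g$-invariance of~$\mu$, produces
\[
\int_{E_N} h \, d\mu \;\ge\; \int_{E_N} \bigl( F_N - F_N \circ g \bigr) d\mu \;\ge\; \int_Y F_N \, d\mu - \int_Y F_N \circ g \, d\mu \;=\; 0 .
\]
Since $E_N \nearrow E_h$, dominated convergence (with majorant $|h| \in L^1$) delivers $\int_{E_h} h \, d\mu \ge 0$. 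This telescoping-plus-invariance identity is the one genuinely delicate ingredient; everything else is bookkeeping.
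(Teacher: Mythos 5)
Your argument is correct: it is the classical Birkhoff proof (invariance of $\limsup$/$\liminf$ of the averages, ergodicity to make them constant, then the Maximal Ergodic Theorem via Garsia's trick applied to $f-(C^*-\epsilon)$ and $(C_*+\epsilon)-f$), and the small indexing and integrability points you flag are handled properly. The paper does not prove this statement at all --- it simply cites Einsiedler--Ward --- and your proof is essentially the standard argument given in that reference, so nothing further is needed.
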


\begin{prop} \label{PActsMinimal} 
Let $\Gamma$ be a \ul{cocompact}, irreducible lattice in a \textup(real or $p$-adic\textup) semisimple Lie group~$G$, and let $P$ be a parabolic subgroup of~$G$. Then:
	\begin{enumerate}
	\item \label{PActsMinimal-meas}
	The $P$-invariant probability measure on $G/\Gamma$ is unique \textup(namely, the $G$-invariant measure $m_{G/\Gamma}$ is the only $P$-invariant probability measure\textup).
	\item \label{PActsMinimal-set}
	$G/\Gamma$ has no closed, nonempty, proper, $P$-invariant subsets. In other words, for all $y \in G/\Gamma$, the orbit $Py$ is dense in $G/\Gamma$.
	\end{enumerate}
\end{prop}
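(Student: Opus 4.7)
The plan is to reduce both parts to the case of a minimal parabolic $P_0 \subseteq P$, then use the structure theory of parabolics together with a Mautner-style contraction argument. Since $P \supseteq P_0$, every $P$-invariant probability measure is $P_0$-invariant, and every $P_0$-orbit lies inside a $P$-orbit, so it suffices to prove both conclusions with $P$ replaced by $P_0$.

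For \pref{PActsMinimal-meas}, let $\mu$ be a $P_0$-invariant probability measure. Write $P_0 = Z_G(A) \cdot N$, where $A$ is a maximal $\QQ_p$-split torus and $N$ is the unipotent radical of~$P_0$; then $\mu$ is in particular invariant under $N$, under $A$, and under the compact anisotropic factor $M$ of $Z_G(A)$. The key step is to show that $\mu$ is also invariant under the opposite horospherical subgroup $N^-$. Choose $a \in A$ that strictly contracts $N^-$ under conjugation, so $a^n u a^{-n} \to e$ for every $u \in N^-$. Combining $a$-invariance of~$\mu$, the compactness of $G/\Gamma$, and a Hopf/Mautner-style comparison of Birkhoff averages along $\{a^n y\}$ and $\{a^n u y = (a^n u a^{-n})(a^n y)\}$ (the latter points are asymptotically close to the former, and continuous functions on the compact space $G/\Gamma$ are uniformly continuous) yields $u_* \mu = \mu$ for every $u \in N^-$. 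Since $\Lie{g} = \Lie{m} \oplus \Lie{a} \oplus \Lie{n} \oplus \Lie{n}^-$, the subgroups $M$, $A$, $N$, $N^-$ together generate~$G$, so $\mu$ is $G$-invariant and hence equals $m_{G/\Gamma}$ by uniqueness of Haar measure.

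For \pref{PActsMinimal-set}, let $Y \subseteq G/\Gamma$ be a nonempty closed $P_0$-invariant subset. By Zorn's lemma and the compactness of $G/\Gamma$, we may assume $Y$ is $P_0$-minimal. The minimal parabolic $P_0$ is amenable (compact-by-abelian-by-unipotent), so the compact $P_0$-space~$Y$ carries a $P_0$-invariant probability measure, which by \pref{PActsMinimal-meas} must be $m_{G/\Gamma}$. Since $m_{G/\Gamma}$ has full support, $Y$ contains $G/\Gamma$, so $Y = G/\Gamma$.

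The main obstacle is the Mautner-style step in \pref{PActsMinimal-meas}: the formal identity $(a^n)_* u_* \mu = (a^n u a^{-n})_* \mu \to \mu$ in the weak-$*$ topology is immediate, but by itself does not force $u_* \mu = \mu$, because $(a^n)_*$ is far from injective on arbitrary probability measures. A rigorous implementation passes to an ergodic component of~$\mu$ and runs a careful generic-point comparison via the pointwise ergodic theorem \cref{PointwiseErgThm} (the set of $a$-generic points for~$\mu$ is essentially $N^-$-invariant by uniform continuity of test functions, and this forces $u_*\mu$ and $\mu$ to coincide). Alternatively one can invoke general measure-rigidity results for horospherical actions (Ratner's theorem in the real case, or its Margulis--Tomanov analogue in the $p$-adic case). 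Everything else---the reduction to~$P_0$, the amenability step, and the use of full support---is routine.
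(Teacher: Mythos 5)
Your architecture (reduce to a minimal parabolic $P_0=Z_G(A)N$, prove the measure statement, then deduce the topological statement) is legitimate, but the central step of part~\pref{PActsMinimal-meas} --- that a $P_0$-invariant probability measure $\mu$ is $N^-$-invariant --- is exactly the nontrivial content, and your argument for it does not close. Passing to an $a$-ergodic component discards the hypothesis that makes the claim true: an $a$-invariant, $a$-ergodic measure need \emph{not} be $N^-$-invariant (measures on compact $A$-orbits, e.g.\ closed-geodesic measures in the rank-one picture, are counterexamples), so no argument that only uses $a$-invariance of the component can succeed. Moreover, the asserted mechanism ``the set of $a$-generic points for $\mu$ is $N^-$-invariant, and this forces $u_*\mu=\mu$'' is not valid: invariance of the generic set under $u\in N^-$ only shows that $u_*\mu$ is carried by the set of $\mu$-generic points, which does not identify $u_*\mu$ (a point mass at a single generic point is also carried by that set), and $u_*\mu$ is not $a$-invariant, so Birkhoff cannot be applied to it. The missing ingredient is a comparison with the Haar measure, which is how the paper proceeds: choosing $a$ so that $P$ is its nonexpanding parabolic, the set $Y$ of points whose $a$-Birkhoff averages converge to $\int f\,d\mu$ is $U_a^-$-invariant by the contraction/uniform-continuity argument, and by Fubini and $P$-invariance of $\mu$ it contains (mod null sets) a set of the form $U_a^-\,(\text{a.e. } P)\,y$; since $U_a^-P$ contains a neighbourhood of the identity, $m_{G/\Gamma}(Y)>0$, and Moore's ergodicity theorem --- this is where the irreducibility of $\Gamma$ enters, and your proposal never uses irreducibility or any ergodicity of $m_{G/\Gamma}$ --- yields a point of $Y$ that is also Birkhoff-generic for $m_{G/\Gamma}$, forcing $\int f\,d\mu=\int f\,dm_{G/\Gamma}$. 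Your fallback appeal to Ratner (or Margulis--Tomanov) is much heavier than needed and is also incomplete as stated: it only says the $N$-ergodic components are homogeneous measures $m_{Hx}$ with $N\subseteq H$ and $Hx$ closed of finite volume, and one still has to rule out proper such $H$ (using cocompactness, absence of unipotents in $\Gamma$, irreducibility) before concluding $H=G$.

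On the other hand, your deduction of part~\pref{PActsMinimal-set} from part~\pref{PActsMinimal-meas} is correct and genuinely different from the paper: since $P_0$ is amenable, any nonempty closed $P_0$-invariant set carries a $P_0$-invariant probability measure, which by \pref{PActsMinimal-meas} must be $m_{G/\Gamma}$, whose support is everything (the Zorn/minimality step is unnecessary). The paper instead proves \pref{PActsMinimal-set} directly and independently of \pref{PActsMinimal-meas}, from density of almost every $a$-orbit (Moore ergodicity again) together with the $U_a^-$-contraction trick. So the genuinely new part of your route buys a cleaner \pref{PActsMinimal-set}, but it transfers the whole burden onto \pref{PActsMinimal-meas}, where the gap above sits.
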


\begin{proof}[Idea of proof]
Since $P$ is parabolic, we know there is an $\RR$-split semisimple element $a \in P$, such that
	\[ P = \bigl\{\, p \in G \mid \text{$\{\, a^n p a^{-n} \mid n \le 0 \,\}$ is bounded} \, \bigr\} . \]
By the Moore Ergodicity Theorem \cite[Exer.~11.2\#11, p.~218]{Morris-ArithGrpsBook}, we also know that $a$ is ergodic on $G/\Gamma$ (with respect to the $G$-invariant probability measure $m_{G/\Gamma}$).

\pref{PActsMinimal-set} Since $a$ is ergodic, almost every $a$-orbit is dense. Hence, we can choose some $y' \approx y$, such that $\{\, a^n y' \mid n \ge 0\,\}$ is dense in $G/\Gamma$.
Let
	\[ U_a^- = \bigl\{\, u \in G \mid a^n u a^{-n}  \tends \1 \,\} . \]
Then $U_a^- P$ contains a neighbourhood of~$\1$ in~$G$, so we may write $y' = u p y$ with $u \in U_a^-$ and $p \in P$.

For any $x \in G/\Gamma$, there is some large $n > 0$, such that $a^n y' \approx x$. Then
	\[ x \approx a^n y' =  a^n u p y = a^n u^- a^{-n} \cdot a^n p y \approx \1 \cdot a^n p y \in P y . \]

\pref{PActsMinimal-meas} Let $\mu$ be an ergodic $P$-invariant probability measure on $G/\Gamma$, let $f$ be a continuous function on $G/\Gamma$, and let
	\[ Y = \bigset{ y \in G/\Gamma }{ \frac{1}{n} \sum_{i=1}^n f(a^i y) \tends \int_{G/\Gamma} f \, d\mu } . \]
We have $\mu(Y) = 1$ by the Pointwise Ergodic Theorem~\pref{PointwiseErgThm}. Since $\mu$ is $P$-invariant, this implies that for $\mu$-a.e.\ $u \in Y$ and $m_P$-a.e.\ $p \in P$, we have $py \in Y$. Also, it is not difficult to see that $uy \in Y$ for every $u \in U_a^-$. This easily implies that $m_{G/\Gamma}(Y) > 0$. Hence, by the Pointwise Ergodic Theorem~\pref{PointwiseErgThm}, there exists $y \in Y$, such that
	\[ \frac{1}{n} \sum_{i=1}^n f(a^i y) \tends \int_{G/\Gamma} f \, dm_{G/\Gamma} . \]
So we must have $\int_{G/\Gamma} f \, d\mu = \int_{G/\Gamma} f \, dm_{G/\Gamma}$.
\end{proof}

\section{Main ideas of the Deroin-Hurtado proof} \label{OverviewSect}

In this \lcnamecref{OverviewSect}, we will present many of the main ideas in the Deroin-Hurtado paper \cite{DeroinHurtado}. (For readers interested in more details, references to specific statements in~\cite{DeroinHurtado} will usually be provided. In many cases, additional details are also available in \cref{DeroinHurtadoPfSect}.)
Our goal is to establish \cref{padicThm}. For the reader's convenience, we reproduce the statement of this result:

\begin{thmref}{padicThm}
\begin{thm}[Deroin-Hurtado] 
If\/ $\Gamma$ is an irreducible lattice in a $p$-adic semisimple Lie group~$G$, and\/ $\rank_{\QQ_p} G \ge 2$, then\/ $\Gamma$ has no nontrivial action on the real line by orientation-preserving homeomorphisms. 
\end{thm}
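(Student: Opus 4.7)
The plan is to suppose, for contradiction, that $\rho\colon \Gamma \to \Homeo_+(\RR)$ is a nontrivial orientation-preserving action, and to derive a contradiction by constructing a $\Gamma$-equivariant family of probability measures over a parabolic boundary of~$G$ whose higher-rank dynamics eventually force~$\rho$ to be trivial. First I would use cocompactness of~$\Gamma$ \fullcsee{SSLattFacts}{padicCocpct} to form the suspension $X := (G \times \RR)/\Gamma$, where $\Gamma$ acts diagonally via $\gamma \cdot (g,t) = (g\gamma^{-1}, \rho(\gamma)t)$, together with its compactified counterpart $\widehat{X} := (G \times \widehat{\RR})/\Gamma$, where $\widehat{\RR}$ denotes the two-point compactification. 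Then $\widehat{X}$ is a compact $G$-space fibering over $G/\Gamma$, so the parabolic dynamics of \cref{PActsMinimal} become applicable.

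The core technical step is to construct a $\Gamma$-equivariant measurable map $\varphi\colon G/P \to \prob(\widehat{\RR})$, where $P$ is a minimal parabolic of~$G$ (with $\Gamma$ acting on $G/P$ by left translation and on $\prob(\widehat{\RR})$ via~$\rho$). Since $P$ is amenable and $\widehat{X}$ is compact, a $P$-invariant probability measure $\mu_X$ on~$\widehat{X}$ exists; its projection to $G/\Gamma$ is $P$-invariant, hence equals $m_{G/\Gamma}$ by \fullcref{PActsMinimal}{meas}. Disintegrating $\mu_X$ along the fibers of $\widehat X \to G/\Gamma$ gives a measurable family of fiber measures in $\prob(\widehat{\RR})$; combining this with the equivariance built into the definition of~$X$ and with $P$-invariance of~$\mu_X$ produces~$\varphi$. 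In Deroin--Hurtado's original argument, the analogous map is obtained via harmonic measures for a suitable random walk on~$\Gamma$, with the Poisson boundary identified with~$G/P$.

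Next I would exploit higher rank. Because $\rank_{\QQ_p} G \ge 2$, the unipotent radical of~$P$ contains commuting unipotent subgroups $U_1, U_2$ whose orbits on $G/P$ exhibit rich independent dynamics; combining this with minimality \fullcsee{PActsMinimal}{set} of parabolic actions on $G/\Gamma$ forces $\varphi$ to be essentially constant with value some $\nu \in \prob(\widehat{\RR})$, which by $\Gamma$-equivariance must be $\rho(\Gamma)$-invariant. If $\nu$ has positive mass on~$\RR$, the standard translation-number/cumulative-distribution argument (using orientation-preservingness and preservation of a Radon measure) yields either a nontrivial homomorphism $\Gamma \to \RR$, contradicting finiteness of $\Gamma/[\Gamma,\Gamma]$ \fullcsee{SSLattFacts}{abel}, or a $\Gamma$-fixed point on~$\RR$; in the latter case I would induct by restricting to each of the two complementary half-lines. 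If instead $\nu$ is concentrated on $\{-\infty,+\infty\}$, then $\rho$ regarded as a circle action on $\widehat{\RR}$ (with $-\infty$ identified to $+\infty$) satisfies the hypothesis of \fullcref{SSLattFacts}{CircleAction}, giving additional finite orbits and hence fixed points on~$\RR$ with which to continue the induction. The induction terminates only when $\rho$ is trivial on a finite-index subgroup, and combined with finiteness of $\Gamma/[\Gamma,\Gamma]$ this forces~$\rho$ itself to be trivial, contradicting our hypothesis.

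The principal obstacle is the construction of~$\varphi$ with genuine equivariance for the left $\Gamma$-action on $G/P$: disintegration of $\mu_X$ naturally produces only a $P\backslash G/\Gamma$-indexed family of fiber measures, and upgrading this to a $\Gamma$-equivariant map on~$G/P$ is the step that in the real case requires the full harmonic-measure machinery of \cite{DeroinHurtado}. In the present $p$-adic cocompact setting, uniqueness of $P$-invariant probability measures \fullcsee{PActsMinimal}{meas} should serve as the crucial substitute, but making the transition from $P\backslash G/\Gamma$-indexing to $G/P$-indexing rigorous is the most delicate point. A secondary issue is tracking the atomic part of $\nu$ at $\pm\infty$ carefully enough that the final reduction reaches the hypothesis of \fullcref{SSLattFacts}{CircleAction} and does not stall on degenerate configurations.
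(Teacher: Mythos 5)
Your proposal has a genuine gap, and it occurs exactly at the point you label a ``secondary issue\rlap.'' The two-point compactification $\widehat{\RR}$ has the fatal defect that $\pm\infty$ are global fixed points of the $\Gamma$-action, so the sub-bundle $(G \times \{\pm\infty\})/\Gamma \subset \widehat{X}$ is a closed $G$-invariant set. A $P$-invariant measure on $\widehat{X}$ obtained from amenability (or any $\Gamma$-equivariant map $G/P \to \prob(\widehat{\RR})$, which exists for the same soft reason) may perfectly well be supported on that sub-bundle, i.e.\ the resulting $\nu$ may be $\delta_{+\infty}$ or a combination of point masses at the ends; this is $\rho(\Gamma)$-invariant for \emph{every} action $\rho$, so no contradiction can follow. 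Your proposed escape route in this degenerate case does not work: collapsing $-\infty$ to $+\infty$ produces a circle action that \emph{automatically} has a finite orbit (the image of the ends), so \fullcref{SSLattFacts}{CircleAction} is vacuous there and yields no fixed point in $\RR$ with which to continue the induction. The induction scheme itself is also not controlled (restricting to complementary half-lines can repeat forever, and nothing forces termination), but the main point is that the boundary-map strategy, which does work for circle actions (Ghys, Burger--Monod), degenerates on the line precisely because every naive compactification of $\RR$ has $\Gamma$-fixed ends. There was no previously known way around this, which is why even $G=\SL(3,\QQ_p)$ was open before Deroin--Hurtado.

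The paper's proof is built to avoid exactly this degeneration, and it is genuinely different from what you propose. Instead of $\widehat{\RR}$ it uses the almost-periodic space $Z$ \csee{AlmPerSpace}: a compact space carrying a free $\RR$-action with $\Gamma$-invariant orbits on which $\Gamma$ acts with \emph{no global fixed point} and by uniformly Lipschitz maps, together with a normalization \pref{MeanDisplacement} making an $\RR$-invariant measure $\mu_Z$ stationary for a random walk on~$\Gamma$. The induced space $X=(G\times Z)/\Gamma$ then carries a $\mu_G$-stationary measure $\mu_X$ and a $P$-invariant measure $\mu_X^P$, and the contradiction is reached not by a translation-number argument but by a dynamical one: the leafwise derivative defines a character $\chi_P$ on~$A$, negativity of $\chi_P$ forces contraction along entire $\RR$-orbits (\cref{keyprop}, via random-walk tracking à la Karlsson--Margulis), this yields invariance of $\mu_X^P$ under centralizers $C_G(a)$ \csee{C_G(a)}, and higher rank lets one propagate this to full $G$-invariance \csee{propagating}, contradicting \cref{NotInvt} (which is where the finite abelianization \fullref{SSLattFacts}{abel} enters, playing a role loosely analogous to the one you intended). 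If you want to salvage your outline, the first thing you must replace is $\widehat{\RR}$ by a compactification without $\Gamma$-fixed points; that replacement is \cref{AlmPerSpace}, and once it is made the soft ``constant boundary map'' step no longer suffices and the quantitative contraction machinery becomes necessary.
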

\end{thmref}

\subsection{Assumptions and notation}

\begin{notation} \label{pGGamma}
Throughout the remainder of this paper:
	\begin{itemize}
	\item $p$ is a prime number,
	\item $G$ is a $p$-adic semisimple Lie group, such that $\rank_{\QQ_p} G \ge 2$,
	\item $\Gamma$ is an irreducible lattice in~$G$,
	\item $K$ is a compact, open subgroup of~$G$,
	\item $P$ is a minimal parabolic subgroup of~$G$,
	and
	\item $A$ is a maximal $\QQ_p$-split torus in~$P$.
	\end{itemize}
\end{notation}

The proof of \cref{padicThm} is by contradiction, so we assume:

\begin{assump} \label{AssumeActsOnR}
 $\Gamma$ has a faithful action by orientation-preserving homeomorphisms of~$\RR$.
\end{assump}

\begin{rem} \label{G=SL3}
\leavevmode\noprelistbreak
	\begin{enumerate}
	\item The proof of \cref{NoActR} shows that any nontrivial, orientation-preserving action of~$\Gamma$ on~$\RR$ will become faithful after modding out a finite, normal subgroup of~$G$. Therefore, even though \cref{padicThm} refers to a \emph{nontrivial} action, no loss of generality results from \cref{AssumeActsOnR}'s requirement that the action is \emph{faithful}.
	\item The reader will not miss out on any of the main ideas if they assume that $G = \SL(3, \QQ_p)$ or that $G = \SL(2, \QQ_p) \times \SL(2,\QQ_p)$. Even these two special cases were not known before the work of Deroin and Hurtado.
	\item \label{G=SL3-P}
	If $G = \SL(3,\QQ_p)$, we may let
	\[ K = \SL(3, \ZZ_p),
	\qquad
	P =  \left[ \begin{matrix} * & * & * \\ 0 & * & * \\ 0 & 0 & * \end{matrix} \right] ,
	\qquad
	A =  \left[ \begin{matrix} * & 0 & 0 \\ 0 & * & 0 \\ 0 & 0 & * \end{matrix} \right] 
	. \]
	\item The same proof applies when $G$ is a finite product of $p$-adic semisimple Lie groups, for various primes~$p$.
	\end{enumerate}
\end{rem}

\begin{note} \label{KGGammaFinite}
By \fullcref{SSLattFacts}{padicCocpct}, we know that $G/\Gamma$ is compact.
On the other hand, since $K$ is open, we also know that $K \backslash G$ is discrete. Therefore, the double-coset space $K \backslash G / \Gamma$ is both compact and discrete. So $K \backslash G / \Gamma$ must be finite. 
\end{note}

\begin{assump} \label{G=KGamma}
To simplify the proof, we will assume that the finite set $K \backslash G / \Gamma$ has only one element. This means:
	\begin{align} \label{G=KGamma-eqn}
	G = K \, \Gamma 
	. \end{align}
(See \cref{FiniteSetSect} for some comments on the more complicated situation where $K \backslash G / \Gamma$ is a larger finite set.)
\end{assump}

\begin{rem}
The conclusion of \cref{KGGammaFinite} is a key reason why the $p$-adic case is much easier than the real case. If $G$ is a (noncompact) real semisimple Lie group  and $K$ is a compact subgroup of~$G$, then $K \backslash G / \Gamma$ is a highly nontrivial manifold that plays a role in the proof of the Deroin-Hurtado Theorem~\pref{DeroinHurtadoThm}.
\end{rem} 

\begin{note} \label{G=KxGamma} 
It is not difficult to see that the real line has no nontrivial, orientation-preserving homeomorphisms of finite order. Hence, \cref{AssumeActsOnR} implies that $\Gamma$ has no nontrivial elements of finite order \cite[\S1.4.1, p.~27]{DeroinNavasRivasBook}. Since every subgroup of~$\Gamma$ is discrete, and every discrete subgroup of a compact group is finite, this implies $\Gamma \cap K$ is trivial.
By combining this with~\pref{G=KGamma-eqn}, we see that 
	\begin{align} \label{G=KxGamma-eqn} 
	\text{the natural map $(k, \gamma) \mapsto k \gamma $ is a $K$-equivariant homeomorphism $K \times \Gamma \simeq G$} 
	.\end{align}
Also, we can identify $G/\Gamma$ with~$K$. 
\end{note}

Recall that a Borel measure~$\mu$ on a topological space~$Y$ is a \emph{probability measure} if $\mu(Y) = 1$.

\begin{notation} \label{nuGDefn}
Let 
\noprelistbreak
	\begin{enumerate}
	\item $m_K$ be the Haar measure on~$K$ (normalized to be a probability measure),
	and
	\item \label{nuGDefn-G}
	$\mu_G$ be a nice probability measure on~$G$ whose support is compact and generates~$G$. We assume \cite[Defn.~3.6]{DeroinHurtado}:
		\begin{enumerate}
		\item $\mu_G$ is absolutely continuous with respect to the Haar measure~$m_G$,
		\item \label{nuGDefn-G-Kinvt}
		$\mu_G$ is \emph{bi-$K$-invariant}, which means $\mu_G(B) = \mu_G(kB) = \mu_G(Bk)$ for every Borel set~$B$ and every $k \in K$,
		and
		\item $\mu_G$ is \emph{symmetric}, which means $\mu_G(B^{-1}) = \mu_G(B)$ for every Borel set~$B$, where $B^{-1} = \{\, b^{-1} \mid b \in B\,\}$.
		\end{enumerate}
	\end{enumerate}
\end{notation}

\begin{warn}
The measure~$\mu_G$ is definitely \emph{not} the Haar measure on~$G$ (because the Haar measure on a noncompact group can never be a probability measure and cannot have compact support).
\end{warn}

\begin{defn} \label{nuGammaDefn}
Define an atomic probability measure~$\mu_\Gamma$ on~$\Gamma$ by $\mu_\Gamma \bigl( \{\gamma\} \bigr) = \mu_G(K\gamma)$. (Note that the support of~$\mu_\Gamma$ is finite, since the support of~$\mu_G$ is compact.)
Since $\mu_G$ is $K$-invariant on the left (and the Haar measure on~$K$ is unique), this implies 
	\[ \mu_G = m_K \times \mu_\Gamma , \]
where we are using the identification $G \simeq K \times \Gamma$ \csee{G=KxGamma}. We assume $\mu_G$ is chosen so that the support of $\mu_\Gamma$ generates~$\Gamma$.
\end{defn}

\begin{rem}
It is more difficult to construct the appropriate measure~$\mu_\Gamma$ when \cref{G=KGamma} does not hold \fullcsee{RandWalkDefns}{muGamma}, and it is usually not true that $\mu_\Gamma$ has finite support. 
\end{rem}

\subsection{The almost-periodic space \texorpdfstring{$Z$}{Z} and the induced \texorpdfstring{$G$}{Z}-space \texorpdfstring{$X$}{X}}

The action of~$\Gamma$ on~$\RR$ obviously extends to an action of~$\Gamma$ on the one-point compactification $\RR \cup \{\infty\}$. However, $\Gamma$ fixes the point~$\infty$ in this action. The following \lcnamecref{AlmPerSpace} shows that $\Gamma$ acts with no fixed points on a much more sophisticated compactification of~$\RR$. (This is a general result about finitely generated groups that act on~$\RR$; it does not use our assumption that $\Gamma$ is a lattice.) Deroin and Hurtado \cite[\S1.3.5]{DeroinHurtado} call this compactification the \emph{almost-periodic space}.

\begin{thm}[see \cref{AlmPerSpacePf}, {\cite[Thm.~1.1]{Deroin-AlmPer}, \cite[Thm.~5.4]{DeroinHurtado}, \cite[Cor.~4.2.11]{DeroinNavasRivasBook}}] \label{AlmPerSpace}
There is a \textup(nonempty\textup) compact metrizable space $Z$, such that: 
	\begin{enumerate}
	\item \label{AlmPerSpace-act}
	$\Gamma$ acts on~$Z$ \textup(by homeomorphisms\textup) with no global fixed point, 
	\item \label{AlmPerSpace-R}
	$\RR$ has a continuous action on $Z$ that is free, 
	\item \label{AlmPerSpace-orbits}
	each\/ $\RR$-orbit is\/ $\Gamma$-invariant, 
	and 
	\item \label{AlmPerSpace-orpresLip}
	the action of\/~$\Gamma$ on each\/ $\RR$-orbit is by orientation-preserving Lipschitz maps.
	\end{enumerate}
\end{thm}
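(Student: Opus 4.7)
The plan is to build $Z$ as an \emph{almost-periodic} compactification obtained from a $\Gamma$-quasi-invariant Radon measure on $\RR$. The essential new input is a nonzero Radon measure $\nu$ on $\RR$, constructed from the random walk on $\Gamma$ driven by $\mu_\Gamma$: I would take $\nu$ to be a weak-$*$ cluster point of renormalized Ces\`aro averages $\frac{1}{N} \sum_{n<N} \mu_\Gamma^{*n} * \nu_0$ of translates of a reference measure $\nu_0$. The key property we need is that each Radon-Nikodym derivative $\phi_\gamma := d\gamma_* \nu / d\nu$ lies in $L^\infty(\nu)$, with $\|\log \phi_\gamma\|_\infty$ controlled by the $\mu_\Gamma$-word-length of $\gamma$. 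Symmetry of $\mu_\Gamma$ \fullcsee{nuGDefn}{G} and the fact that its support generates $\Gamma$ \csee{nuGammaDefn} are used to guarantee that such a $\nu$ exists and is nontrivial.

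Once $\nu$ is in hand, reparametrize $\RR$ by the cumulative function $F(x) = \nu((0,x])$ for $x \geq 0$ (and $F(x) = -\nu((x,0])$ for $x < 0$), landing in an interval $J = F(\RR) \subseteq \RR$. Transporting the $\Gamma$-action through $F$ turns each $\gamma$ into an orientation-preserving bi-Lipschitz self-map of $J$, the Lipschitz constant being $\|\phi_\gamma\|_\infty$. Define $Z$ to be the closure, in the product topology on $C(\RR, \RR)^\Gamma$, of the family of \emph{pointed} actions $\{\sigma_t : t \in J\}$ given by $\sigma_t(\gamma)(s) = F(\gamma \cdot F^{-1}(t+s)) - t$, interpreted on the appropriate neighbourhood of $0$. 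Equicontinuity of $\{\sigma_t(\gamma) : t \in J\}$ on compacta (at fixed $\gamma$), coming from the uniform Lipschitz bound, gives compactness of $Z$ via Arzel\`a-Ascoli; separability of the target gives metrizability.

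The space $Z$ carries a natural $\RR$-action by re-basing, $(s \cdot \sigma)(\gamma)(u) = \sigma(\gamma)(u+s) - s$, and a $\Gamma$-action given on the dense subset by $\gamma \cdot \sigma_t = \sigma_{\gamma \cdot t}$, which extends continuously to all of $Z$ by $(\gamma \cdot \sigma) = \sigma(\gamma)(0) \cdot \sigma$. Properties~\pref{AlmPerSpace-R}, \pref{AlmPerSpace-orbits}, and~\pref{AlmPerSpace-orpresLip} are then visible from the construction: the $\RR$-action is free (a nonzero shift changes the evaluation at $0$), each $\Gamma$-orbit lies inside an $\RR$-orbit by definition of the $\Gamma$-action, and the Lipschitz bounds $\|\phi_\gamma\|_\infty$ pass to the closure.

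The hard part is property~\pref{AlmPerSpace-act}: ruling out a global $\Gamma$-fixed point in $Z$. A hypothetical $z = \lim_n \sigma_{t_n}$ fixed by every $\gamma \in \Gamma$ would give, in the limit, a pointed action of $\Gamma$ fixing $0$, i.e., $\sigma_z(\gamma)(0) = 0$ for every $\gamma$. Combined with the uniform Lipschitz control on the $\sigma_z(\gamma)$ and the faithfulness of the original action (\cref{AssumeActsOnR}), careful analysis should force $\Gamma$ to act trivially on a nonempty open sub-interval of $\RR$ after transporting back through $F^{-1}$, contradicting faithfulness. Making this rigorous — controlling how the bounds $\|\phi_\gamma\|_\infty$ behave as $t_n \to \infty$, and ensuring that the limit retains enough of the original dynamics to force nontriviality — is where the bulk of the technical work lies and is precisely the reason the measure $\nu$ must be built from the random walk rather than chosen arbitrarily.
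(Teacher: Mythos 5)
Your overall route (conjugate the action by the distribution function of a random-walk measure to make it uniformly Lipschitz with bounded displacement, then take a compact closure of re-based copies of the action, with $\RR$ acting by translation of the basepoint and $\Gamma$ acting through evaluation at the basepoint) is in spirit the same construction the paper uses in \cref{AlmPerSpacePf}, where the first step is quoted as a black box from Deroin--Kleptsyn--Navas--Parwani. But there are genuine gaps at exactly the points where the proof has real content. First, your justification of freeness of the $\RR$-action \pref{AlmPerSpace-R} --- ``a nonzero shift changes the evaluation at $0$'' --- is not a proof and is false for general groups: a limit of the re-based actions $\sigma_{t_n}$ can perfectly well be an action that commutes with some translation by $t\neq 0$ (already for $\Gamma=\ZZ$ acting by a translation, every point of the closure is fixed by a nonzero shift). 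Freeness is precisely where the lattice hypotheses must enter: in the paper, a point fixed by the shift $t$ gives an action of $\Gamma$ on the circle $\RR/t\ZZ$, which has a finite orbit by \fullcref{SSLattFacts}{CircleAction}; orientation-preservation plus the finiteness of the abelianization \fullcsee{SSLattFacts}{abel} then forces a global fixed point of that action on $\RR$, contradicting a nontriviality condition. Your sketch never invokes these facts for property \pref{AlmPerSpace-R}, so the claim is unsupported.

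Second, you explicitly leave property \pref{AlmPerSpace-act} unresolved (``careful analysis should force\dots''), and this is not a routine technicality: as $t_n\to\infty$ the limits of $\sigma_{t_n}$ can degenerate to actions with a fixed point at the basepoint, and nothing in your construction prevents this. The paper's fix is structural rather than analytic: $Z$ is defined as the set of \emph{all} homomorphisms $\Gamma\to\Homeo^+(\RR)$ satisfying the uniform Lipschitz and bounded-displacement bounds together with the closed, quantitative nontriviality condition $\int_\Gamma(\varphi_\gamma(t)-t)^2\,d\mu_\Gamma(\gamma)>1/C$ for every $t$ \pref{ZNontriv}; this condition survives in limits, is nonempty for suitable $C$ by the DKNP theorem, and then the absence of a global $\Gamma$-fixed point follows in two lines from freeness of the $\RR$-action (a fixed $\varphi$ would have $\varphi_\alpha(0)=0$ for all $\alpha$, violating the nontriviality condition). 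You would need to build an analogous uniform, closed lower bound on displacement into your $Z$ before taking closures. Finally, your construction of the measure $\nu$ as a weak-$*$ cluster point of Ces\`aro averages on the noncompact line is shaky: mass can escape, the limit need not be stationary, and without stationarity you do not get the $L^\infty$ bounds on $d\gamma_*\nu/d\nu$ (for a genuinely stationary Radon measure these bounds are immediate from $\nu\ge\mu_\Gamma(\gamma)\,\gamma_*\nu$ on generators, which is the mechanism behind the cited result); the paper sidesteps all of this by quoting the existence of a Lipschitz, bounded-displacement (indeed zero-mean-displacement, see \pref{MeanDisplacement}) action directly.
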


\begin{notation}
We will use additive notation for the action of~$\RR$ on~$Z$:
	\[ \text{the image of a point $z \in Z$ by $t \in \RR$ is denoted $z + t$} . \]
This additive notation allows us to also define subtraction for points that are in the same $\RR$-orbit. Namely, for $z_1, z_2 \in Z$ and $t \in \RR$: 
	\[ \text{if $z_1 = z_2 + t$, then we let $z_1 - z_2 \coloneqq t$} . \]
Since the $\RR$-action is assumed to be free, the difference $z_1 - z_2$ is well-defined (when it exists, i.e., whenever $z_1$ and~$z_2$ are in the same $\RR$-orbit).
\end{notation}

We ``induce'' (or ``suspend'') the $\Gamma$-action to obtain a $G$-action (on a different space).

\begin{defn}[{\cite[\S5.3]{DeroinHurtado}, \cite[p.~75]{ZimmerBook}}] \label{XDefn}
Let 
	\[ \text{$X = \mathop{\mathrm{Ind}}\nolimits_\Gamma^G Z \coloneqq (G \times Z) / \Gamma$, \quad 
	where \quad $(h,z)*\gamma = (h\gamma, \gamma^{-1} z)$} . \] 
Then $G$ acts on this quotient space by 
	\[ g [(h,z)] = {[(gh, z)]} , \]
where we use $[(h,z)]$ to denote the image of $(h,z)$ under the natural quotient map from $G \times Z$ to~$(G \times Z)/\Gamma$.
\end{defn}

\begin{note} \label{Xbundle}
\leavevmode\noprelistbreak
	\begin{enumerate}
	\item For $[(h,z)] \in X$ and $\gamma \in \Gamma$, we have
		\[ [(h\gamma,z)] = [(h\gamma \cdot \gamma^{-1}, \gamma z)] = [(h,\gamma z)] . \]
	\item \label{Xbundle-X}
	The natural map $(k, z) \mapsto [(k,z)]$ is a $K$-equivariant homeomorphism	$K \times Z \simeq X$ (by \pref{G=KxGamma-eqn}). This implies that $X$ is compact (because $K$ and $Z$ are compact).
	\item \label{Xbundle-equi}
	The function $[(h, z)] \mapsto h \Gamma$ is a well-defined $G$-equivariant map $X \to G/\Gamma$. It gives $X$ the structure of a fibre bundle over~$G/\Gamma$ with fibres homeomorphic to~$Z$.
	\item \label{Xbundle-OnLeaf}
	For $g \in G$ and $k \in K$, there exist $k' \in K$ and $\gamma = \gamma(g, k) \in \Gamma$, such that $gk = k' \gamma$ (by \pref{G=KGamma-eqn}). Then, for all $z \in Z$, we have
		\[ g [(k, z)] = [(gk, z)] = [(k' \gamma, z)] = [(k', \gamma z)] . \]
	Hence, $g$ acts on the entire fibre $[\{k\} \times Z]$ via the element~$\gamma(g, k)$.
	\end{enumerate}
\end{note}

We have defined $z_1 - z_2$ when $z_1$ and~$z_2$ are points in~$Z$ that are in the same $\RR$-orbit. We extend this to points in~$X$:

\begin{notation}
The action of~$\RR$ on~$Z$ can be extended to a free action of~$\RR$ on~$X$ via the identification $X \simeq K \times Z$: $[(k, z)] + t = [(k, z + t)]$. This yields a definition of $x - y$ for all $x,y \in X$, such that $x$ and~$y$ are in the same $\RR$-orbit.
\end{notation}

\begin{warn}
For $g \in G$, it is usually \emph{not} true that $[(g, z)] + t = [(g, z + t)]$. To see this, write $g = k\gamma$, with $k \in K$ and $\gamma \in \Gamma$ (by using~\pref{G=KGamma-eqn}). Then:
	\[ [(g, z)] + t
	= [(k\gamma, z)] + t
	= [(k, \gamma z)] + t
	= [(k, \gamma z + t)]
	= \bigl[ \bigl( k \gamma, \gamma^{-1}(\gamma z + t) \bigr) \bigr]
	= \bigl[ \bigl( g, \gamma^{-1}(\gamma z + t) \bigr) \bigr]
	. \]
This will rarely be equal to $[(g, z + t)]$, because $\Gamma$ is not assumed to commute with the $\RR$-action.
\end{warn}

However, we do have the following weaker property, which follows from \fullcref{Xbundle}{OnLeaf} (and \fullcref{AlmPerSpace}{orpresLip}):

\begin{lem}[cf.\ {\cite[\S5.3.1]{DeroinHurtado}}] \label{LipOnLeaves}
The action of~$G$ on~$X$ respects the $\RR$-orbits, and the restriction of each element of~$G$ to an $\RR$-orbit is Lipschitz \textup(with a Lipschitz constant that does not depend on the choice of the orbit\textup). More precisely, for all $g \in G$, there exists  $C = C(g) \in \RR^+$, such that, for all $x \in X$ and $t \in \RR$, we have $g(x + t) \in gx + \RR$ and
	\[ |g(x + t) - g x| \le C \, |t| . \]
\end{lem}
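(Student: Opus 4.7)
The plan is to unwrap the definitions using the fibre-bundle description $X \simeq K \times Z$ of \fullcref{Xbundle}{X}, reducing everything to the Lipschitz property of the $\Gamma$-action on $\RR$-orbits in~$Z$ supplied by \fullcref{AlmPerSpace}{orpresLip}, together with a finite-double-coset observation coming from the compactness of~$K$ and discreteness of~$\Gamma$.

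First I would fix $g \in G$ and $t \in \RR$, and write $x = [(k,z)]$ with $k \in K$, $z \in Z$. Using \pref{G=KGamma-eqn}, I factor $gk = k'\gamma$ with $k' \in K$ and $\gamma = \gamma(g,k) \in \Gamma$. Then \fullcref{Xbundle}{OnLeaf} gives
\[ gx = [(k', \gamma z)] \qquad\text{and}\qquad g(x+t) = g[(k, z+t)] = [(k', \gamma(z+t))] . \]
By \fullcref{AlmPerSpace}{orbits} the $\RR$-orbit of~$z$ in~$Z$ is $\gamma$-invariant, so $\gamma(z+t)$ and~$\gamma z$ lie in a common $\RR$-orbit and their difference $\gamma(z+t) - \gamma z \in \RR$ is well-defined. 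The definition of the $\RR$-action on~$X$ then shows $g(x+t) \in gx + \RR$ and
\[ g(x+t) - gx = \gamma(z+t) - \gamma z . \]

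For the Lipschitz estimate, \fullcref{AlmPerSpace}{orpresLip} supplies, for each $\gamma \in \Gamma$, a Lipschitz constant $C_\gamma$ valid on every $\RR$-orbit of~$Z$, giving $|\gamma(z+t) - \gamma z| \le C_\gamma \, |t|$. The final step is to see that as $k$ ranges over $K$ the element $\gamma(g,k)$ takes only finitely many values: from $\gamma(g,k) = (k')^{-1} g k$ we get
\[ \gamma(g,k) \in K^{-1} g K \cap \Gamma = KgK \cap \Gamma \]
(using $K^{-1} = K$, since $K$ is a subgroup). The set $KgK$ is compact because $K$ is compact, and its intersection with the discrete set~$\Gamma$ is therefore finite. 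Setting
\[ C(g) = \max \{\, C_\gamma \mid \gamma \in KgK \cap \Gamma \,\} < \infty \]
then yields the desired uniform bound $|g(x+t) - gx| \le C(g) \, |t|$, independent of $x \in X$.

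The only delicate input is the uniform-in-orbit reading of \fullcref{AlmPerSpace}{orpresLip}: for the finite-maximum step to do any work, I need, for each $\gamma \in \Gamma$, a single Lipschitz constant~$C_\gamma$ that controls $\gamma$ on \emph{every} $\RR$-orbit of~$Z$ simultaneously, rather than an orbit-dependent constant. I expect this uniformity is built into the construction of the almost-periodic space (it is the point of the word ``almost-periodic''), but it is the one genuinely substantive ingredient borrowed from \cref{AlmPerSpace} — everything else in the lemma is bookkeeping against the fibre decomposition $X \simeq K \times Z$.
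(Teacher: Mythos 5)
Your argument is correct and is essentially the paper's: the paper simply asserts that the lemma follows from \fullcref{Xbundle}{OnLeaf} and \fullcref{AlmPerSpace}{orpresLip}, and your observation that $\gamma(g,k)\in KgK\cap\Gamma$, a compact-intersect-discrete (hence finite) set, is exactly the bookkeeping needed to make the constant uniform in $x$. The uniform-in-orbit constant $C_\gamma$ that you flag as the one substantive input is indeed built into the construction of~$Z$ (see \cref{GammaLipschitz}, which gives a single constant for each generator valid on all of~$Z$; general $\gamma$ follows by composition), so there is no gap.
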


\subsection{The probability measures \texorpdfstring{$\mu_Z$,~$\mu_X$, and $\mu_X^P$}{on Z and X}}

\begin{notation} \label{muZmuXDefn}
Let 
	\begin{itemize}
	\item $\mu_Z$ be an ergodic $\RR$-invariant probability measure on~$Z$ (see \cref{ExistsRInvProb} for the existence of such a measure),
	and
	\item  $\mu_X = m_K \times \mu_Z$ (under the identification $X \simeq K \times Z$), so $\mu_X$ is a $K$-invariant probability measure on~$X$.
	\end{itemize}
\end{notation}

\begin{rem}
See \cref{GeneralmuX} for a definition of~$\mu_X$ when $X \not\simeq K \times Z$.
\end{rem}

Lipschitz functions on~$\RR$ are differentiable almost everywhere (and the derivative is bounded). Thus, we see from \cref{LipOnLeaves} that each element of~$G$ has a derivative along the $\RR$-orbits (a.e.):

\begin{defn}[{\cite[\S5.2.2]{DeroinHurtado}}]
For $g \in G$ and $\mu_X$-a.e.\ $x \in X$, we let
	\[ \dleaf_g(x) = \lim_{t \to 0} \frac{g(x+ t) - x}{t} . \]
We can similarly define $\dleaf_\gamma(z)$ for $\gamma \in \Gamma$ and $\mu_Z$-a.e.~$z \in Z$.
\end{defn}

By definition, the measure $\mu_Z$ is $\RR$-invariant, so moving along the $\RR$-orbits preserves the measure. Since the $\Gamma$-action preserves these $\RR$-orbits, any non-invariance of~$\mu_Z$ under this action comes from distortion within the $\RR$-orbits. Also, for any $\gamma \in \Gamma$, the action on each $\RR$-orbit is Lipschitz, so the measure class of Lebesgue measure is preserved. This implies that $\gamma_*\mu_Z$ is in the same measure class as~$\mu_Z$. Furthermore:

\begin{lem}[{\cite[Lems.~5.5 and 5.10(4)]{DeroinHurtado}}] \label{RadonNikodym}
For every $\gamma \in \Gamma$, the Radon-Nikodym derivative $d \gamma_*\mu_Z / d \mu_Z $ is equal to $\dleaf_{\gamma^{-1}}$~\textup(a.e.\textup). Similarly, for all $g \in G$, we have
	\[   \frac{d g_*\mu_X }{ d \mu_X} = \dleaf_{g^{-1}} \quad \text{\textup(a.e.\textup)} . \]
\end{lem}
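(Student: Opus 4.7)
The plan is to prove both Radon-Nikodym identities by the same foliated change-of-variables computation: locally disintegrate the $\RR$-invariant measure along $\RR$-orbits, apply the one-variable change of variables on each orbit, and then reassemble. For the identity on $Z$: since $\mu_Z$ is $\RR$-invariant and the $\RR$-action is free, $\mu_Z$ admits a disintegration $\mu_Z = \int \ell_y \, d\bar\mu(y)$ in which $\ell_y$ is Lebesgue measure on the orbit of $y$ (by $\RR$-invariance) and $\bar\mu$ is a transverse measure; locally this takes the concrete form $\mu_Z = \nu_T \otimes dt$ in a flow box $T \times I$. By \fullcref{AlmPerSpace}{orbits} and \fullcref{AlmPerSpace}{orpresLip}, each $\gamma \in \Gamma$ preserves every $\RR$-orbit and acts on it as an orientation-preserving Lipschitz homeomorphism, so parametrizing the orbit of $y$ by $t \mapsto y + t$, the restriction of $\gamma$ becomes a Lipschitz self-homeomorphism $\tilde\sigma_y$ of $\RR$ with derivative $\tilde\sigma_y'(t) = \dleaf_\gamma(y + t)$ a.e. The substitution $s = \tilde\sigma_y(t)$, combined with the chain-rule identity $\dleaf_\gamma(w) \cdot \dleaf_{\gamma^{-1}}(\gamma w) = 1$, yields $\gamma_* \ell_y = \dleaf_{\gamma^{-1}} \ell_y$ for $\bar\mu$-a.e.\ $y$; integrating against $\bar\mu$ then gives $\gamma_* \mu_Z = \dleaf_{\gamma^{-1}} \mu_Z$, which is the first claim.

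The argument for $g \in G$ acting on $X$ follows the same template. Under the identification $X \cong K \times Z$ from \fullcref{Xbundle}{X}, the $\RR$-action on $X$ only touches the $Z$-factor, so $\mu_X = m_K \times \mu_Z$ is $\RR$-invariant and has an analogous local product disintegration along $\RR$-orbits. By \cref{LipOnLeaves}, $g$ sends $\RR$-orbits to $\RR$-orbits and is Lipschitz on each, so the one-variable change of variables again produces a Jacobian factor $\dleaf_{g^{-1}}$ in the leaf direction. In contrast to the $\Gamma$-case, $g$ may now move between distinct $\RR$-orbits; but the $G$-equivariant fibre-bundle projection $X \to G/\Gamma$ of \fullcref{Xbundle}{equi} identifies the transverse direction with the base $G/\Gamma$, on which the induced transverse measure is the $G$-invariant Haar probability $m_{G/\Gamma}$. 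Hence the transverse contribution to the Jacobian is trivial, and the full Radon-Nikodym derivative is simply the leaf factor $\dleaf_{g^{-1}}$.

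The main technical obstacle is justifying the local disintegrations and patching the local computations into a global identity. Because $\RR$-orbits in $Z$ may well be dense, the orbit space $Z/\RR$ is generally non-Hausdorff and there is no well-behaved global quotient; one must therefore work with countably many flow boxes (which exist near $\mu_Z$-a.e.\ point by a classical lemma for invariant measures of free continuous $\RR$-actions) together with a partition of unity, exploiting that the asserted identity $\gamma_*\mu_Z = \dleaf_{\gamma^{-1}}\mu_Z$ is local and can be checked by integrating against continuous compactly supported test functions. The same caveat applies in the $X$-setting, but the global product structure $\mu_X = m_K \times \mu_Z$ together with the $G$-invariance of $m_{G/\Gamma}$ makes the bookkeeping on the transverse direction essentially automatic.
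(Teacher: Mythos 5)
There is no proof of this lemma in the paper to compare against: it is quoted from Deroin--Hurtado with only the heuristic paragraph that precedes it, so your proposal is a genuine argument rather than a reconstruction. For the statement on~$Z$ your outline is essentially correct and standard: $\RR$-invariance of $\mu_Z$ forces the local product form $\nu_T\otimes dt$ in every flow box (local cross-sections exist through every point of a fixed-point-free continuous flow on a compact metric space, and translation-invariance in the flow direction makes the conditional measures Lebesgue); each $\gamma$ preserves every orbit and is bi-Lipschitz along it (both $\gamma$ and $\gamma^{-1}$ are leafwise Lipschitz), so the one-variable change of variables together with the chain-rule identity $\dleaf_\gamma(w)\,\dleaf_{\gamma^{-1}}(\gamma w)=1$ produces the leafwise density $\dleaf_{\gamma^{-1}}$. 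Your opening global disintegration ``$\mu_Z=\int \ell_y\,d\bar\mu(y)$'' with $\ell_y$ infinite Lebesgue measure on a full orbit is not literally meaningful for a probability measure, but you immediately replace it by the local flow-box statement, which is all that is used; the patching by countably many flow boxes and test functions is fiddly (an image plaque can straddle several boxes) but routine bookkeeping.

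The weak point is the $X$-case. Your justification of the ``trivial transverse Jacobian'' misidentifies the transversal: the fibres of the projection $X\to G/\Gamma$ are entire copies of~$Z$, each containing a continuum of $\RR$-orbits, so the transverse direction to the $\RR$-foliation of~$X$ is not $G/\Gamma$ but $G/\Gamma$ times a transversal inside~$Z$, and $g$ genuinely moves that internal transverse coordinate (through an element of~$\Gamma$); the $G$-invariance of $m_{G/\Gamma}$ alone does not control this motion. The repair is immediate and avoids any foliated analysis on~$X$: by \pref{G=KxGamma-eqn} and \fullcref{Xbundle}{OnLeaf}, in the coordinates $X\simeq K\times Z$ the element $g$ acts by $(k,z)\mapsto\bigl(k'(g,k),\,\gamma(g,k)\,z\bigr)$, where $k\mapsto k'(g,k)$ is exactly the action of~$g$ on $G/\Gamma\simeq K$ (hence preserves~$m_K$) and $\gamma(g,k)\in\Gamma$ is locally constant in~$k$. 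Since $g^{-1}$ acts on the fibre over~$k'$ via $\gamma(g,k)^{-1}$, one has $\dleaf_{g^{-1}}\bigl([(k',w)]\bigr)=\dleaf_{\gamma(g,k)^{-1}}(w)$, and Fubini applied to $\mu_X=m_K\times\mu_Z$ reduces the second identity to the first one fibre by fibre. In other words, the $X$-statement should be derived as a corollary of the $Z$-statement (in the same spirit as the paper's computation proving \cref{muXStationary}), rather than by rerunning the flow-box argument with a transverse factor whose structure you have not actually pinned down.
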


The almost-periodic space~$Z$ can be constructed so that the following holds. (See \fullcref{GrpActDefns}{stationary} for the definition of ``stationary\rlap.'')

\begin{prop}[see \cref{nuGammaStationary}, {\cite[Lem.~5.6]{DeroinHurtado}}] \label{nuGammaStationaryStated}
The measure $\mu_Z$ is $\mu_\Gamma$-stationary.
\end{prop}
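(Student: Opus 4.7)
The plan is to transfer $\mu_\Gamma$-stationarity on $Z$ into $\mu_G$-stationarity on the induced space $X$, and then to exploit the freedom in constructing the almost-periodic space $Z$ to produce a measure with both the $\RR$-invariance and $\mu_G$-stationarity properties.

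\textbf{Step 1 (Reformulation on $X$).} First I would establish the equivalence: a probability measure $\nu$ on $Z$ is $\mu_\Gamma$-stationary if and only if $m_K \times \nu$ on $X \simeq K \times Z$ is $\mu_G$-stationary. Unwinding the $G$-action on $X \simeq K\times Z$ via $g\cdot(k',z) = (k^*(g,k'),\,\gamma^*(g,k')\,z)$, where $gk' = k^*(g,k')\,\gamma^*(g,k')$ is the unique decomposition afforded by $G = K\Gamma$ \fullcsee{Xbundle}{OnLeaf}, and using the factorization $\mu_G = m_K \times \mu_\Gamma$ from \cref{nuGammaDefn}, the main computation reduces to the observation that right $K$-invariance of $\mu_G$ implies that, for each fixed $k'\in K$, the map $\gamma\mapsto\gamma^*(g,k')$ pushes $\mu_\Gamma$ forward to itself. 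Moreover, left $K$-invariance of $\mu_G$ forces every $\mu_G$-stationary probability measure on $X$ to be automatically $K$-invariant, since $k_*\mu = k_*(\mu_G * \mu) = (k_*\mu_G) * \mu = \mu_G * \mu = \mu$ for every $k \in K$; hence it is enough to produce a $\mu_G$-stationary, $\RR$-invariant probability measure on $X$, which will automatically have product form $m_K \times \mu_Z$ for some $\mu_\Gamma$-stationary $\RR$-invariant $\mu_Z$ on $Z$.

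\textbf{Step 2 (Main obstacle: producing $\mu_X$).} The principal difficulty is that $G$ does \emph{not} commute with the $\RR$-action on $X$: by \cref{LipOnLeaves}, each $g \in G$ acts on $\RR$-orbits by a Lipschitz homeomorphism rather than by a translation, so $R_t \circ g \ne g \circ R_t$ in general, and the convolution operator $T(\mu) = \mu_G * \mu$ therefore does not preserve the compact convex set $\mathcal{P}_\RR(X)$ of $\RR$-invariant probability measures. This rules out a direct Markov--Kakutani argument for the ``commuting'' pair $T$ and $\{R_t\}$. The resolution is to exploit the freedom we still have in \emph{constructing} $Z$: following Deroin \cite{Deroin-AlmPer}, $Z$ is built as a compactification tailored to the $\mu_\Gamma$-random walk, so that a $\mu_\Gamma$-stationary $\RR$-invariant probability measure emerges from the construction itself (for instance, as the projection of a harmonic measure for the random walk on an $\RR$-equivariant enlargement, or by applying Markov--Kakutani to $T$ on $\mathcal{P}(X)$ and then verifying, via the fine structure of the almost-periodic compactification, that the fiber of the resulting $K$-invariant $\mu_G$-stationary measure is $\RR$-invariant).

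\textbf{Step 3 (Ergodicity).} Finally, given an $\RR$-invariant $\mu_\Gamma$-stationary probability measure $\nu^*$ on $Z$, I would extract an ergodic $\mu_Z$ by decomposing $\nu^*$ into $\RR$-ergodic components; since $\Gamma$ preserves $\RR$-orbits by \fullcref{AlmPerSpace}{orbits}, $\Gamma$ permutes these ergodic components, so $\mu_\Gamma$-convolution descends to a Markov operator on the simplex of $\RR$-ergodic components whose stationary distribution recovers the ergodic decomposition of $\nu^*$. An extreme-point argument within the convex set of $\RR$-invariant $\mu_\Gamma$-stationary probability measures then yields a component that is simultaneously $\RR$-ergodic and $\mu_\Gamma$-stationary, giving the required $\mu_Z$. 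The main technical obstacle throughout is Step 2 -- reconciling $\RR$-invariance with $\mu_\Gamma$-stationarity in the presence of the non-commutativity of $G$ with $\RR$ -- which is precisely why the proposition asserts that $Z$ \emph{can be constructed} to have this property, rather than that every almost-periodic space does.
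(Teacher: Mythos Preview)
Your approach has a genuine gap: you never identify the concrete mechanism that makes $\mu_Z$ stationary, and your Step~2 amounts to ``the construction of $Z$ somehow does it.'' The paper's proof is direct and computational, and rests on a single property of the almost-periodic space that you do not mention: the \emph{zero mean displacement} condition
\[
\int_\Gamma (\gamma z - z)\, d\mu_\Gamma(\gamma) = 0 \qquad \text{for all } z \in Z,
\]
which comes from the Deroin--Kleptsyn--Navas--Parwani result that any left-orderable group admits an action on $\RR$ with this property. Given this, the proof is a three-line calculation: since $\mu_Z$ is $\RR$-invariant, \cref{RadonNikodym} gives $d(\gamma^{-1})_*\mu_Z/d\mu_Z = \dleaf_\gamma$; then
\[
\int_\Gamma \dleaf_\gamma(z)\, d\mu_\Gamma(\gamma)
= 1 + \lim_{t\to 0} \frac{1}{t}\int_\Gamma \bigl(\gamma(z+t) - (z+t)\bigr)\, d\mu_\Gamma(\gamma)
= 1,
\]
using symmetry of $\mu_\Gamma$ and the mean-displacement identity. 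So the average of the Radon--Nikodym derivatives is $1$, which is exactly $\mu_\Gamma$-stationarity.

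Your Step~1 is also logically inverted relative to the paper: the implication ``$\mu_Z$ is $\mu_\Gamma$-stationary $\Rightarrow$ $\mu_X$ is $\mu_G$-stationary'' is \cref{muXStationaryStated}, which is a \emph{corollary} of the proposition you are trying to prove, not a tool for proving it. Your Step~2 then tries to produce a $\mu_G$-stationary $\RR$-invariant measure on $X$ by abstract fixed-point arguments, but as you yourself note, $T(\mu) = \mu_G * \mu$ does not preserve $\RR$-invariant measures, and you offer no substitute beyond ``the fine structure of the almost-periodic compactification.'' That fine structure \emph{is} the mean-displacement condition; once you have it, there is no need for any of the compactness or ergodic-decomposition machinery in Steps~1--3.
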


This has the following crucial consequence.

\begin{cor}[see \cref{muXStationary},{\cite[Prop.~5.11]{DeroinHurtado}}] \label{muXStationaryStated}
$\mu_X$ is $\mu_G$-stationary.
\end{cor}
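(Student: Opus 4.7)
My plan is to derive $\mu_G * \mu_X = \mu_X$ as a short chain of convolution identities, with the $\mu_\Gamma$-stationarity of $\mu_Z$ from \cref{nuGammaStationaryStated} as the essential input, together with the bi-$K$-invariance of $\mu_G$. The main observations I need are: (i) the product measure $\mu_X = m_K \times \mu_Z$ can also be written as the convolution $\mu_X = m_K * \mu_Z$, where $\mu_Z$ is viewed as a measure on $X$ via the ``slice'' embedding $z \mapsto [(1, z)]$ and $m_K$ as a measure on $G$ supported on $K$, since $k \cdot [(1, z)] = [(k, z)]$; (ii) by \cref{nuGammaDefn}, $\mu_G = m_K \times \mu_\Gamma$ under $G \simeq K \times \Gamma$, which is the same as the group convolution $\mu_G = m_K * \mu_\Gamma$; and (iii) the right-$K$-invariance of $\mu_G$ from \fullcref{nuGDefn}{G-Kinvt}, combined with the fact that $m_K$ is a probability measure, yields $\mu_G * m_K = \mu_G$.

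Granting these, associativity of convolutions (of measures on $G$ with measures on the $G$-space $X$) gives the chain
\[ \mu_G * \mu_X = \mu_G * (m_K * \mu_Z) = (\mu_G * m_K) * \mu_Z = \mu_G * \mu_Z = (m_K * \mu_\Gamma) * \mu_Z = m_K * (\mu_\Gamma * \mu_Z). \]
By \cref{nuGammaStationaryStated}, the inner convolution $\mu_\Gamma * \mu_Z$ equals $\mu_Z$, so the whole expression collapses to $m_K * \mu_Z = \mu_X$, as required.

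The main subtle point is making sure the various convolutions are interpreted consistently across $G$, $X$, and~$Z$. Step~(i) in particular uses the identification $X \simeq K \times Z$, which is only available under the simplifying \cref{G=KGamma}. Likewise, when I apply the stationarity hypothesis, I need to know that $\mu_\Gamma * \mu_Z$ computed as a measure on $X$ (via the $G$-action restricted to~$\Gamma$) coincides with $\mu_\Gamma * \mu_Z$ computed on~$Z$ (via the original $\Gamma$-action); this follows because unwinding the equivalence relation in \cref{XDefn} gives $\gamma \cdot [(1, z)] = [(\gamma, z)] = [(1, \gamma z)]$, so the two $\Gamma$-actions agree on the embedded slice. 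Once this bookkeeping is in place, the argument is purely formal, and I anticipate no analytic obstacle.
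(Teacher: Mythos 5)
Your proof is correct, but it takes a different route from the paper's. The paper's argument (\cref{muXStationary}) works with densities: using the symmetry of~$\mu_G$ and \cref{RadonNikodym}, it reduces stationarity to the pointwise identity $\int_G \dleaf_g\bigl([(k_0,z_0)]\bigr)\,d\mu_G(g)=1$ a.e., which it then verifies by the Chain Rule, the right $K$-invariance of~$\mu_G$, the decomposition $\mu_G=m_K\times\mu_\Gamma$, the fact that $\dleaf_k=1$ for $k\in K$, and finally \cref{nuGammaStationary}. You instead manipulate the measures themselves: writing $\mu_X=m_K*\iota_*\mu_Z$ (with $\iota(z)=[(\1,z)]$), $\mu_G=m_K*\mu_\Gamma$, and $\mu_G*m_K=\mu_G$, and using associativity of convolution together with the $\Gamma$-equivariance of the slice $\iota$, you collapse $\mu_G*\mu_X$ to $m_K*(\mu_\Gamma*\iota_*\mu_Z)=m_K*\iota_*\mu_Z=\mu_X$. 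All the individual identities check out, and your attention to the two bookkeeping points (the identification $X\simeq K\times Z$ from \cref{G=KGamma}, and the agreement of the two $\Gamma$-actions on the embedded slice) is exactly what is needed, so the argument is complete. What your version buys is economy and transparency: it avoids the leafwise derivatives, the Chain Rule, the a.e.\ differentiability issues, and even the symmetry of~$\mu_G$, and it makes plain that under $G=K\Gamma$ the passage from $\mu_\Gamma$-stationarity of~$\mu_Z$ to $\mu_G$-stationarity of~$\mu_X$ is purely formal. What the paper's version buys is continuity with the rest of the argument and with the general case: the Radon--Nikodym/$\dleaf$ formalism is reused later (e.g.\ in \cref{chiDefn} and beyond), and it is the viewpoint that survives when $G\neq K\Gamma$, where $\mu_X$ is no longer a product and is built from harmonic sections as in \cref{GeneralmuX} --- which is the point of the warning following the paper's proof that the implication is not formal in general. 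Your proof, by contrast, genuinely depends on the product structure provided by \cref{G=KGamma}, as you correctly note.
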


Recall that $P$ is a minimal parabolic subgroup (see \cref{pGGamma} and perhaps also \fullcref{G=SL3}{P}).
It is known that all $\mu_G$-stationary measures come from $P$-invariant measures, by averaging the $K$-translates:

\begin{thm}[Furstenburg {\cite[Thm.~2.1]{Furstenberg-RandProd}}] \label{FurstenburgMuP} 
There is a unique $P$-invariant probability measure $\mu_X^P$ on~$X$, such that
	\[ \mu_X = \int_K k_* \, \mu_X^P \, dm_K(k) . \] 
\end{thm}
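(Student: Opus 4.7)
My plan is to follow Furstenberg's classical argument \cite{Furstenberg-RandProd} centered on the Furstenberg boundary. In the $p$-adic setting we have the Iwasawa-type decomposition $G = KP$, so the boundary $B \coloneqq G/P$ is naturally identified with the compact quotient $K/(K \cap P)$. In particular, $B$ carries a unique $K$-invariant probability measure~$\nu_B$, namely the image of~$m_K$ under the quotient $K \to B$; the bi-$K$-invariance of~$\mu_G$ \fullcsee{nuGDefn}{G-Kinvt} implies that $\nu_B$ is also the image of~$\mu_G$ under the quotient $G \to B$.

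For existence, I would run a random walk $(g_n)_{n \ge 1}$ with i.i.d.\ steps of law~$\mu_G$ and form the random probability measures $\nu_n^\omega \coloneqq (g_1 g_2 \cdots g_n)_* \mu_X$ on~$X$. Conditioning on $(g_1, \ldots, g_n)$ and invoking the $\mu_G$-stationarity of~$\mu_X$ \csee{muXStationaryStated} gives the martingale identity $\mathbb{E}[\nu_{n+1}^\omega \mid g_1, \ldots, g_n] = \nu_n^\omega$, so $(\nu_n^\omega)$ is a bounded martingale in $\prob(X)$; the martingale convergence theorem then yields a weak limit random measure $\nu_\infty^\omega$ with $\mathbb{E}[\nu_\infty^\omega] = \mu_X$. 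Separately, a contracting-element argument inside~$A$ shows that the walk projects in~$B$ to a sequence $(g_1 \cdots g_n) P$ that converges almost surely to a random boundary point $\xi(\omega) \in B$ whose law is~$\nu_B$.

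The crucial (and most delicate) step is to show that $\nu_\infty^\omega$ is determined by~$\xi(\omega)$, so that $\nu_\infty^\omega = \psi(\xi(\omega))$ almost surely for a $G$-equivariant measurable map $\psi \colon B \to \prob(X)$. This is the assertion that $B$ is a $\mu_G$-boundary in Furstenberg's sense; the proof uses the contracting dynamics of a suitable $\QQ_p$-split regular element of~$A$ on the unipotent radical of~$P$, which forces the tail behavior of $(g_1 \cdots g_n)_* \mu_X$ on~$X$ to be controlled by the direction of escape in~$B$. Granting this, set $\mu_X^P \coloneqq \psi(eP)$. Because $P$ fixes the basepoint $eP \in B$ and $\psi$ is $G$-equivariant, $\mu_X^P$ is $P$-invariant, and
\[ \mu_X \;=\; \mathbb{E}[\nu_\infty^\omega] \;=\; \int_B \psi(b) \, d\nu_B(b) \;=\; \int_K k_* \mu_X^P \, dm_K(k) , \]
using that $\nu_B$ is the pushforward of~$m_K$ under $K \to K/(K \cap P) = B$.

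For uniqueness, any other $P$-invariant probability $\widetilde{\mu}_X^P$ satisfying the same averaging identity yields a second $G$-equivariant map $\widetilde\psi \colon B \to \prob(X)$, $kP \mapsto k_* \widetilde{\mu}_X^P$, whose $\nu_B$-barycenter is also~$\mu_X$. Rerunning the martingale construction with the decomposition $\mu_X = \int_B \widetilde\psi \, d\nu_B$ and again invoking that the limit $\nu_\infty^\omega$ is almost surely determined by~$\xi(\omega)$ forces $\widetilde\psi = \psi$ $\nu_B$-almost everywhere, hence $\widetilde{\mu}_X^P = \mu_X^P$. The single hard step is the third paragraph: identifying $\nu_\infty^\omega$ as a function of~$\xi(\omega)$. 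Everything else (the martingale property, the expectation formula, and the reduction of uniqueness to an a.e.\ equality of boundary maps) is essentially formal once this Poisson-boundary property of~$B$ is in hand, which is where the semisimple structure of~$G$ — in particular the availability of strongly contracting elements of~$A$ — is essential.
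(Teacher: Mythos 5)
The paper offers no proof of \cref{FurstenburgMuP}: it is quoted directly from Furstenberg \cite{Furstenberg-RandProd}, so there is no internal argument to compare yours with. Your outline is a reconstruction of Furstenberg's classical proof, and its skeleton is the right one: the martingale $\nu_n^\omega = (g_1\cdots g_n)_*\mu_X$ (valid by \cref{muXStationaryStated}), boundary convergence in $B = G/P$, and a factorization of the limit measures through the boundary point. Note, though, that your standing identity $G = KP$ is an extra hypothesis on $K$ beyond ``compact open'' (it holds for the special maximal compact $K = \SL(3,\ZZ_p)$ of \fullcref{G=SL3}{P}, not for an arbitrary compact open subgroup), and it is exactly what makes $\mu_X = \int_K k_*\mu_X^P\, dm_K(k)$ the correct normalization; it is worth saying this explicitly.

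The genuine gap is that your ``crucial step'' is asserted rather than proved, and it is not a routine contraction estimate: the claim that $\nu_\infty^\omega$ is a function of $\xi(\omega)$ alone is essentially the identification of $(G/P,\nu_B)$ as the Poisson boundary of $(G,\mu_G)$, i.e.\ it carries most of the weight of the theorem itself. In particular it must use the absolute continuity and bi-$K$-invariance of $\mu_G$ \fullcsee{nuGDefn}{G}; your sketch never invokes these, and for a general generating measure on~$G$ (whose Poisson boundary is strictly larger than $G/P$) the asserted factorization, and with it the existence half of the correspondence, fails. A second, smaller gap: the map $\psi$ obtained by factoring through $\xi$ is only $\nu_B$-a.e.\ defined and a.e.\ equivariant, so ``set $\mu_X^P \coloneqq \psi(eP)$'' is not well-defined as written; one needs the standard (but not formal) upgrade of an a.e.-equivariant measurable map $G/P \to \prob(X)$ to a strictly equivariant one, which is precisely the identification of such maps with $P$-invariant measures. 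Finally, your uniqueness argument over-invokes the hard step: for any candidate $\widetilde{\mu}_X^P$ the map $kP \mapsto k_*\widetilde{\mu}_X^P$ is already continuous and equivariant, the Iwasawa decomposition gives $(g_1\cdots g_n)_*\mu_X = \int_{G/P} k_*\widetilde{\mu}_X^P \, d\bigl((g_1\cdots g_n)_*\nu_B\bigr)(kP)$, and $(g_1\cdots g_n)_*\nu_B \to \delta_{\xi(\omega)}$ plus continuity already pins down the martingale limit; two candidates then agree $\nu_B$-a.e., hence everywhere by continuity and full support of $\nu_B$, hence at $eP$. So uniqueness needs only the (comparatively easy) $\mu$-boundary property of $G/P$, and it is the existence half that rests entirely on the unproven kernel.
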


The remainder of the proof of the Deroin-Hurtado Theorem will show that the stationary measure~$\mu_X$ is $G$-invariant (or, equivalently, that $\mu_X = \mu_X^P$). This will contradict the following result, and thereby complete the proof of \cref{padicThm}, by establishing that \cref{AssumeActsOnR} must be false, because it leads to a contradiction.

\begin{prop}[{\cite[Prop.~5.12]{DeroinHurtado}}] \label{NotInvt}
We have:
\noprelistbreak
	\begin{enumerate}
	\item \label{NotInvt-Gamma}
	the stationary measure~$\mu_Z$ is \ul{not} $\Gamma$-invariant,
	\item \label{NotInvt-G}
	the stationary measure~$\mu_X$ is \ul{not} $G$-invariant,
	\item \label{NotInvt-P}
	the $P$-invariant measure~$\mu_X^P$ is \ul{not} $K$-invariant \textup(so it is not $G$-invariant\textup),
	and
	\item \label{NotInvt-=}
	$\mu_X \neq \mu_X^P$.
	\end{enumerate}
\end{prop}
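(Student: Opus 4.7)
The plan is to establish part~\pref{NotInvt-Gamma} directly and to deduce parts \pref{NotInvt-G}, \pref{NotInvt-P}, and \pref{NotInvt-=} as essentially formal consequences. For \pref{NotInvt-Gamma}, I would assume $\mu_Z$ is $\Gamma$-invariant and seek a contradiction with \fullcref{AlmPerSpace}{act}. \Cref{RadonNikodym} gives $\dleaf_\gamma \equiv 1$ $\mu_Z$-a.e.\ for every $\gamma \in \Gamma$. Since $\mu_Z$ is $\RR$-invariant and the $\RR$-action is free \fullcsee{AlmPerSpace}{R}, the disintegration of $\mu_Z$ along $\RR$-orbits is, up to a positive scalar, Lebesgue measure; combined with the Lipschitz regularity of \fullcref{AlmPerSpace}{orpresLip} and the fundamental theorem of calculus for absolutely continuous functions, this forces each $\gamma$ to act as a translation on every $\mu_Z$-generic $\RR$-orbit. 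Concretely, there is an $\RR$-orbit-constant function $\tau_\gamma \colon Z \to \RR$ with $\gamma z = z + \tau_\gamma(z)$ $\mu_Z$-a.e. The identity $\gamma_1(\gamma_2 z) = (\gamma_1\gamma_2)z$, together with the $\RR$-invariance of $\tau_{\gamma_1}$, collapses to the cocycle identity $\tau_{\gamma_1\gamma_2}(z) = \tau_{\gamma_1}(z) + \tau_{\gamma_2}(z)$, so $\gamma \mapsto \tau_\gamma(z)$ is a homomorphism $\Gamma \to \RR$ at $\mu_Z$-a.e.\ $z$. By \fullcref{SSLattFacts}{abel}, $\Gamma/[\Gamma,\Gamma]$ is finite, and since $\RR$ is torsion-free this homomorphism must be trivial. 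Hence every $\gamma \in \Gamma$ fixes a full-$\mu_Z$-measure set, and, as $\Gamma$ is countable, the intersection still has full measure and meets the nonempty support of $\mu_Z$, producing a global $\Gamma$-fixed point in $Z$ and contradicting \fullcref{AlmPerSpace}{act}.

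For \pref{NotInvt-G}, I would assume $\mu_X$ is $G$-invariant and reduce to \pref{NotInvt-Gamma}. $G$-invariance gives $\Gamma$-invariance of $\mu_X$, so \cref{RadonNikodym} yields $\dleaf_\gamma \equiv 1$ $\mu_X$-a.e.\ for every $\gamma \in \Gamma$. Writing $\gamma k = k' \gamma'$ with $k' \in K$ and $\gamma' \in \Gamma$ as in \fullcref{Xbundle}{OnLeaf}, we have $\dleaf_\gamma\bigl([(k,z)]\bigr) = \dleaf_{\gamma'}(z)$. A short check using $G = K\Gamma$ and $\Gamma \cap K = \{\1\}$ \csee{G=KxGamma} shows that, for each fixed $k \in K$, the assignment $\gamma \mapsto \gamma'$ is a bijection $\Gamma \to \Gamma$. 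Applying Fubini to $\mu_X = m_K \times \mu_Z$ once for each of the countably many $\gamma \in \Gamma$ and intersecting the resulting full-$m_K$-measure subsets of $K$, I would obtain a single $k_0 \in K$ with the property that $\dleaf_{\gamma'}(z) = 1$ $\mu_Z$-a.e.\ for every $\gamma \in \Gamma$ (with $\gamma'$ depending on $\gamma$ and~$k_0$). Surjectivity of the bijection at $k_0$ then gives $\dleaf_{\tilde\gamma} \equiv 1$ $\mu_Z$-a.e.\ for every $\tilde\gamma \in \Gamma$, so \cref{RadonNikodym} makes $\mu_Z$ a $\Gamma$-invariant measure, contradicting \pref{NotInvt-Gamma}.

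Parts \pref{NotInvt-P} and \pref{NotInvt-=} then follow from \pref{NotInvt-G} together with \cref{FurstenburgMuP} and the Iwasawa-type decomposition $G = KP$ for $p$-adic semisimple groups. If $\mu_X^P$ were $K$-invariant, the averaging formula in \cref{FurstenburgMuP} collapses to $\mu_X = \mu_X^P$, making $\mu_X$ simultaneously $P$- and $K$-invariant and hence $G$-invariant, contradicting \pref{NotInvt-G}; this proves \pref{NotInvt-P}. Symmetrically, if $\mu_X = \mu_X^P$, then $\mu_X$ inherits $P$-invariance from $\mu_X^P$, which combined with its built-in $K$-invariance again forces $G$-invariance, yielding \pref{NotInvt-=}. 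The main obstacle is \pref{NotInvt-Gamma}, and within it the careful passage from the $\mu_Z$-a.e.\ statement ``$\dleaf_\gamma = 1$'' to the pointwise cocycle identity for $\tau_\gamma$; this depends on absolute continuity of the orbit disintegration of~$\mu_Z$ (from $\RR$-invariance) together with a clean FTC argument for Lipschitz orbit maps. Once that foundation is in place, the remaining steps are bookkeeping around Fubini, the bijection $\gamma \mapsto \gamma'$, and the Iwasawa decomposition.
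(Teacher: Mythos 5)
Your argument is correct, and its overall skeleton matches the paper's: part~\pref{NotInvt-Gamma} via $\dleaf_\gamma \equiv 1$ and the finiteness of $\Gamma/[\Gamma,\Gamma]$, part~\pref{NotInvt-G} by reducing to~\pref{NotInvt-Gamma}, and parts \pref{NotInvt-P} and~\pref{NotInvt-=} as formal consequences of \cref{FurstenburgMuP}. The genuine divergence is in the reduction for~\pref{NotInvt-G}: the paper (\cref{NotInvtPf}) gets $\Gamma$-invariance of $\mu_Z$ from $G$-invariance of $\mu_X$ by a two-line measure computation, comparing $\mu_X\bigl([K_0\times Z_0]\bigr)$ with $\mu_X\bigl([\gamma K_0\gamma^{-1}\times\gamma Z_0]\bigr)$ and using unimodularity of~$G$ to cancel $m_K(K_0)$, whereas you go through leafwise Radon--Nikodym derivatives, Fubini over $\mu_X = m_K\times\mu_Z$, and the bijection $\gamma\mapsto\gamma'$ of~$\Gamma$ induced by $G=K\Gamma$, $K\cap\Gamma=\{\1\}$. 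Your route is valid (the bijectivity check and the identity $\dleaf_\gamma\bigl([(k,z)]\bigr)=\dleaf_{\gamma'}(z)$ both follow from \fullcref{Xbundle}{OnLeaf}), but it is noticeably heavier than the paper's direct computation, and it silently re-uses the absolute continuity $\gamma_*\mu_Z\ll\mu_Z$ to upgrade ``derivative $\equiv 1$'' to actual invariance; the unimodularity argument avoids derivatives altogether. In part~\pref{NotInvt-Gamma} you end with a global fixed point in~$Z$, contradicting \fullcref{AlmPerSpace}{act}, while the paper contradicts faithfulness of the action; your version is, if anything, tidier, and your explicit passage from ``$\dleaf_\gamma=1$ a.e.'' to translations on a.e.\ orbit (Lebesgue conditionals plus FTC for Lipschitz orbit maps) fills a step the paper only gestures at. One small caution: for \pref{NotInvt-P} and~\pref{NotInvt-=} you invoke an Iwasawa decomposition $G=KP$, which need not hold for an arbitrary compact open~$K$; what is actually needed (and what the paper asserts) is only that $K$ and~$P$ together generate~$G$, and invariance under both subgroups then gives invariance under the group they generate, so your conclusion stands with that justification.
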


\begin{proof}
\pref{NotInvt-Gamma}
Suppose $\mu_Z$ is $\Gamma$-invariant. Then for all $\gamma \in \Gamma$ we have 
	\[ \frac{d \gamma_* \mu_Z }{ d\mu_Z} = \frac{d  \mu_Z }{ d\mu_Z} = 1 \qquad \text{a.e.}, \]
so \cref{RadonNikodym} tells us $\dleaf_\gamma =1$~a.e. This implies that $\Gamma$ acts by translations on each $\RR$-orbit. However, the group of translations is abelian, and the abelianization of $\Gamma$ is finite \fullcsee{SSLattFacts}{abel}. This contradicts \cref{AssumeActsOnR}, which states that the action of~$\Gamma$ is faithful.

\medbreak

\pref{NotInvt-G} It can be shown that if $\mu_X$ is $G$-invariant, then $\mu_Z$ is $\Gamma$-invariant, which contradicts~\pref{NotInvt-Gamma}. Indeed, it is a general property of induced actions that $\mu_Z$ is $\Gamma$-invariant if and only if $\mu_X$ is $G$-invariant, but see \cref{NotInvtPf} for a proof of the direction we need, that is specialized to our situation.

\medbreak

\pref{NotInvt-=}
Suppose $\mu_X = \mu_X^P$. Since $\mu_X^P$ is $P$-invariant, this assumption implies that $\mu_X$ is $P$-invariant. However, $\mu_X$ is also known to be $K$-invariant \fullcsee{nuGDefn}{G}. These two subgroups generate all of~$G$, so we conclude that $\mu_X$ is $G$-invariant. This contradicts~\pref{NotInvt-G}.

\medbreak

\pref{NotInvt-P}
Suppose $\mu_X^P$ is $K$-invariant. This means $k_* \mu_X^P = \mu_X^P$ for all $k \in K$, so
	\[ \mu_X 
	= \int_K k_* \mu_X^P \, dm_K(k)
	= \int_K \mu_X^P \, dm_K(k)
	= \mu_X^P . \]
This contradicts~\pref{NotInvt-=}.
\end{proof}

\subsection{Contraction on \texorpdfstring{$\RR$}{R}-orbits and the Key Proposition}

\begin{defn}[{\cite[\S7.1, p.~43]{DeroinHurtado}}] \label{chiDefn}
We define $\chi_P \colon A \to \RR$ by
	\[ \chi_P(a) = \int_X \log \, D^\leaf_{\!a} (x) \, d \mu_X^P(x) . \]
(There is a technical issue here: we know that $D^\leaf_{\!a} (x)$ exists for $\mu_X$-a.e.\ $x \in X$, but the definition assumes that it exists for $\mu_X^P$-a.e.\ $x \in X$. See \cref{chiDefnIssue} for a proof that the derivative exists $\mu_X^P$-a.e.)
\end{defn}

\begin{lem}[see \cref{chiPHomo}, {\cite[\S7.1, p.~43]{DeroinHurtado}}] \label{chiPHomoOverview}
It follows from the Chain Rule that $\chi_P$ is a homomorphism.
\end{lem}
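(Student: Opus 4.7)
The plan is to verify $\chi_P(ab) = \chi_P(a) + \chi_P(b)$ for all $a,b \in A$ by writing $\log D^\leaf_{ab}$ as a sum via the Chain Rule, integrating, and then using that $\mu_X^P$ is $A$-invariant (since $A \subseteq P$) to handle the change-of-variables that arises.

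First I would invoke the Chain Rule along each $\RR$-orbit (which is well-defined since the $G$-action is Lipschitz on leaves by \cref{LipOnLeaves}, so all leafwise derivatives in sight exist $\mu_X$-a.e., and, by the technical lemma referenced in \cref{chiDefn}, also $\mu_X^P$-a.e.). Composing $ab$ as ``apply $b$, then apply $a$'' gives, for $\mu_X^P$-a.e.\ $x \in X$,
\[
D^\leaf_{ab}(x) = D^\leaf_a(bx) \cdot D^\leaf_b(x),
\]
and hence
\[
\log D^\leaf_{ab}(x) = \log D^\leaf_a(bx) + \log D^\leaf_b(x).
\]

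Next I would integrate both sides against $\mu_X^P$ and split the integral:
\[
\chi_P(ab) = \int_X \log D^\leaf_a(bx) \, d\mu_X^P(x) + \int_X \log D^\leaf_b(x)\, d\mu_X^P(x).
\]
The second integral is exactly $\chi_P(b)$. For the first, I would change variables $y = bx$, which in measure-theoretic terms means rewriting the integrand against the pushforward $b_*\mu_X^P$:
\[
\int_X \log D^\leaf_a(bx) \, d\mu_X^P(x) = \int_X \log D^\leaf_a(y) \, d(b_*\mu_X^P)(y).
\]
Since $b \in A \subseteq P$ and $\mu_X^P$ is $P$-invariant (by \cref{FurstenburgMuP}), we have $b_*\mu_X^P = \mu_X^P$, so this first integral equals $\chi_P(a)$, giving $\chi_P(ab) = \chi_P(a)+\chi_P(b)$.

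The main obstacle is really a bookkeeping one rather than a conceptual one: one must justify that the Chain Rule identity and the change-of-variables formula are valid with respect to $\mu_X^P$ (not just $\mu_X$), which requires knowing that $D^\leaf_a$, $D^\leaf_b$, and $D^\leaf_{ab}$ all exist and are finite $\mu_X^P$-almost everywhere, and that $\log D^\leaf_a$ is $\mu_X^P$-integrable. Existence a.e.\ is exactly what \cref{chiDefnIssue} (referenced from \cref{chiDefn}) is designed to handle, and integrability of $\log D^\leaf_a$ follows from the uniform Lipschitz bound in \cref{LipOnLeaves} applied to $a$ and $a^{-1}$, which gives two-sided bounds $|\log D^\leaf_a| \le \log \max\bigl(C(a),C(a^{-1})\bigr)$. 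Once these technicalities are in place, the identity above is a single application of the Chain Rule combined with $A$-invariance of $\mu_X^P$.
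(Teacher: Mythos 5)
Your proposal is correct and follows essentially the same route as the paper: apply the leafwise Chain Rule to get $\log D^\leaf_{ab}(x) = \log D^\leaf_a(bx) + \log D^\leaf_b(x)$, integrate against $\mu_X^P$, and use the $P$-invariance (hence $A$-invariance) of $\mu_X^P$ to convert the first integral into $\chi_P(a)$. The extra remarks on $\mu_X^P$-a.e.\ existence (via \cref{chiDefnIssue}) and on integrability of $\log D^\leaf_a$ from the two-sided Lipschitz bounds are sound and merely make explicit technicalities the paper leaves implicit.
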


Notice that if $\chi_P(a) < 0$, then there is constant $C > 1$, such that if $x$ and~$y$ are two nearby points in the same $\RR$-orbit, then, on average, 
	\[ |ax - ay| < \frac{|x - y|}{C}  . \]
Applying~$a$ repeatedly should decrease the distance by an exponential factor, so one would expect the distance to tend to~$0$ in the limit. This is indeed true (if we start with two points that are not too far apart):

\begin{lem}[local contraction on $\RR$-orbits, see \cref{chiPLocal}, {\cite[Lem.~7.1]{DeroinHurtado}}] \label{chiPLocalOverview}
If $a \in A$, such that $\chi_P(a) < 0$, then for $\mu_X^P$-a.e.~$x \in X$, there exists $\epsilon = \epsilon(x) > 0$, such that
	\[ \text{$a^n (x + \epsilon) \ - \ a^n (x - \epsilon) \ \to \ 0$ \quad as $n \to \infty$} . \]
\end{lem}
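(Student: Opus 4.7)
The plan is to combine the chain rule with the Pointwise Ergodic Theorem to obtain pointwise exponential contraction of the leaf-derivative $D^\leaf_{a^n}$, and then to upgrade this to local contraction of small $\RR$-intervals via Egoroff's theorem and a leafwise density argument.

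First, iterating the chain rule (the identity already used in \cref{chiPHomoOverview}) gives, for $\mu_X^P$-a.e.\ $x$,
\[
\log D^\leaf_{a^n}(x) \;=\; \sum_{i=0}^{n-1} \log D^\leaf_a(a^i x).
\]
Since $a\in A\subset P$ and $\mu_X^P$ is $P$-invariant, $\mu_X^P$ is in particular $a$-invariant, so the Pointwise Ergodic Theorem~\pref{PointwiseErgThm} (applied on each $a$-ergodic component of $\mu_X^P$) produces a measurable, $a$-invariant function $\bar\chi$ with
\[
\tfrac{1}{n}\log D^\leaf_{a^n}(x)\ \longrightarrow\ \bar\chi(x) \ \text{ $\mu_X^P$-a.e.,} \qquad \int_X \bar\chi\,d\mu_X^P \;=\; \chi_P(a) \;<\; 0.
\]
A short additional argument upgrades this to $\bar\chi(x)<0$ for $\mu_X^P$-a.e.\ $x$ (not merely on average): if the $a$-invariant set $\{\bar\chi\ge 0\}$ had positive $\mu_X^P$-measure, then restricting $\mu_X^P$ to it and averaging its $K$-translates would produce a second $\mu_G$-stationary probability measure, contradicting the uniqueness supplied by \cref{FurstenburgMuP}. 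Thus $D^\leaf_{a^n}(x)\to 0$ exponentially for $\mu_X^P$-a.e.\ $x$.

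To convert pointwise decay into the stated \emph{local} contraction, I use that each $a^n$ is Lipschitz along $\RR$-orbits (\cref{LipOnLeaves}), so by absolute continuity
\[
a^n(x+\epsilon)-a^n(x-\epsilon) \;=\; \int_{-\epsilon}^{\epsilon} D^\leaf_{a^n}(x+t)\,dt.
\]
By Egoroff's theorem, for any $\delta>0$ there is a set $X_\delta\subset X$ with $\mu_X^P(X_\delta)>1-\delta$ on which $D^\leaf_{a^n}(y)\le e^{-\alpha n}$ holds uniformly in $y$ for $n\ge N(\delta)$, with a fixed $\alpha>0$. Because $\mu_X=m_K\times\mu_Z$ is $\RR$-invariant along leaves while $\mu_X^P$ is absolutely continuous with respect to $\mu_X$ (via \cref{FurstenburgMuP}), the Lebesgue Density Theorem applied in the $t$-variable shows that, for $\mu_X^P$-a.e.\ $x$, the set $\{t : x+t\in X_\delta\}$ has Lebesgue density~$1$ at $t=0$. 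Shrinking $\epsilon=\epsilon(x,\delta)$ so that this set occupies all but a fraction $\delta$ of $(-\epsilon,\epsilon)$, I split the integral: the good part is $\le 2\epsilon\, e^{-\alpha n}\to 0$, and the bad part, of Lebesgue measure $\le 2\epsilon\delta$, is controlled by iterating the same estimate with $\delta$ replaced by $\delta/2$ along a diagonal subsequence.

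The main obstacle is exactly this second step, namely transferring $\mu_X^P$-a.e.\ pointwise exponential decay of $D^\leaf_{a^n}(x)$ into integrated decay over a small $\RR$-neighbourhood. Two intertwined subtleties must be dealt with: (i) the limiting rate $\bar\chi$ must be shown to be strictly negative \emph{a.e.}, which forces a small rigidity argument exploiting the uniqueness of $\mu_X^P$ among stationary/$P$-invariant measures; and (ii) an $\mu_X^P$-a.e.\ statement must be transferred to a Lebesgue-a.e.\ statement along the $\RR$-orbit through $x$, which is possible only because $\mu_X$ disintegrates as $m_K\times\mu_Z$ with $\mu_Z$ being $\RR$-invariant, and $\mu_X^P$ is absolutely continuous with respect to $\mu_X$.
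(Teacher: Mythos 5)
Your first step (telescoping the chain rule and applying the Pointwise Ergodic Theorem) matches the spirit of the paper, but the upgrade from $\int_X \bar\chi \, d\mu_X^P = \chi_P(a) < 0$ to $\bar\chi < 0$ a.e.\ is not established by your argument. The uniqueness in \cref{FurstenburgMuP} concerns the $P$-invariant measure whose $K$-average is the \emph{fixed} measure $\mu_X$; the restriction of $\mu_X^P$ to the $a$-invariant set $\{\bar\chi \ge 0\}$ is not $P$-invariant, its $K$-average is not known to be $\mu_G$-stationary, and even if it were, producing ``a second stationary measure'' contradicts nothing: no uniqueness of $\mu_G$-stationary measures on~$X$ is asserted anywhere (different choices of $\mu_Z$ already give different ones). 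The fact you need --- that the exponent equals $\chi_P(a)$ (in particular is negative) on a.e.\ $a$-ergodic component of $\mu_X^P$ --- is exactly the nontrivial point the paper defers to \cite[Prop.~7.2]{DeroinHurtado}; your rigidity argument does not prove it.

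The more serious gap is the passage from pointwise decay of $\dleaf_{a^n}(x)$ to decay of $a^n(x+\epsilon)-a^n(x-\epsilon)=\int_{-\epsilon}^{\epsilon}\dleaf_{a^n}(x+t)\,dt$. First, your Lebesgue-density step requires the conditional measures of $\mu_X^P$ along $\RR$-orbits to be equivalent to Lebesgue, or at least $\mu_X^P \ll \mu_X$ with leafwise control; neither is available. The identity $\mu_X=\int_K k_*\mu_X^P\,dm_K$ does not give $\mu_X^P\ll\mu_X$ (it only controls $k_*\mu_X^P$ for a.e.~$k$), and it is $\mu_X=m_K\times\mu_Z$, not $\mu_X^P$, whose structure along the $\RR$-orbits is Lebesgue. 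The Egoroff set $X_\delta$ is large for $\mu_X^P$, and nothing transfers that to Lebesgue density~$1$ along the orbit through a $\mu_X^P$-typical point. Second, even granting density~$1$, on the complementary bad set of measure $\le 2\epsilon\delta$ the only available bound is $\dleaf_{a^n}\le C(a)^n$ (the $n$-th power of the Lipschitz constant from \cref{LipOnLeaves}), so the bad part of the integral can be of order $\delta\epsilon\,C(a)^n$, which explodes; shrinking $\delta$ with $n$ destroys the Egoroff uniformity (the threshold $N(\delta)$ and the set $X_\delta$ both change), and the proposed diagonal subsequence does not repair this. This is precisely the difficulty the paper's proof (\cref{chiPLocal}) is built to avoid: it never integrates the derivative over the leaf, but instead bounds the one-step stretch of a short interval by the bounded function $D_\eta(x)=\sup_{0<\epsilon<\eta}\log\bigl((a(x+\epsilon)-a(x))/\epsilon\bigr)$, whose Birkhoff sums control the telescoped difference quotient as long as the interval stays shorter than~$\eta$, and then closes a bootstrap induction by choosing $\epsilon$ so small that the resulting exponential bound keeps the interval below~$\eta$ for all~$n$. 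Some device of this kind --- a scale-$\eta$ difference-quotient cocycle plus a bootstrapping argument --- is the missing idea; without it, a.e.\ pointwise decay of the leafwise derivative does not yield contraction of intervals.
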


With additional work, it can be shown that $a$ contracts entire $\RR$-orbits, not just $\epsilon$-intervals. 
The resulting contraction property will be the key to the proof of \cref{padicThm}. 

\begin{key}[global contraction on $\RR$-orbits, cf.\ {\cite[Prop.~7.7]{DeroinHurtado}}] \label{keyprop}
For all $a \in A$, if $\chi_P(a) < 0$, then 
	\[ \text{for $\mu_X^P$-a.e.\ $x \in X$, \ for all $y \in x + \RR$, \quad $a^n \, x - a^n \, y \ \tends \ 0$} . \]
\end{key}

\begin{proof}
See \cref{keypropPfShort} for an outline. More details can be found in \cref{globalSect}.
\end{proof}

\subsection{Invariance under the centralizer}

\begin{notation}
For $a \in A$, let
	\[ U_a^+ = \bigset{ u \in G }{ \begin{matrix} a^{n} u a^{-n} \to 1 \\[-2pt] \text{as $n \to -\infty$} \end{matrix}}
	\qquad \begin{pmatrix} \text{This is the \emph{expanding horospherical}} \\ \text{\emph{subgroup} corresponding to~$a$.} \end{pmatrix}
	 \]
\end{notation}

\Cref{keyprop} has the following consequence, which will be used to show (by a bootstrapping argument) that $\mu_X^P$ is $G$-invariant. This contradicts \fullcref{NotInvt}{P}.

\begin{cor}[cf.\ {\cite[Prop.~8.3]{DeroinHurtado}}] \label{C_G(a)}
If $a \in A$, such that $\chi_P(a) < 0$ and $U_a^+ \subseteq P$, then $\mu_X^P$ is $C_G(a)$-invariant. 
\end{cor}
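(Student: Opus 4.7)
Let $c \in C_G(a)$ and set $\nu := c_* \mu_X^P$; the plan is to show $\nu = \mu_X^P$. First, observe two basic properties of $\nu$. Since $c$ and $a$ commute, $a_* \nu = c_* a_* \mu_X^P = \nu$, so $\nu$ is $a$-invariant. Second, the contraction of \cref{keyprop} passes from $\mu_X^P$ to $\nu$: by \cref{LipOnLeaves}, $c^{-1}$ is Lipschitz along each $\RR$-orbit, so if a $\nu$-typical $x$ is written as $x = cx'$ with $x'$ typical for $\mu_X^P$ and $y \in x + \RR$, then $c^{-1}y \in x' + \RR$ and
\[ |a^n x - a^n y| = |c(a^n x') - c(a^n c^{-1}y)| \le C(c)\,|a^n x' - a^n c^{-1}y| \tends 0. \]

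The next step is to compare $\mu_X^P$ and $\nu$ via their Birkhoff averages along $a$. Given $f \in C(X)$, let $\tilde f(x) := \lim_N \tfrac{1}{N}\sum_{n=0}^{N-1} f(a^n x)$ on the set $E_f^+$ where the limit exists; by \cref{PointwiseErgThm}, $E_f^+$ has full measure for every $a$-invariant probability, in particular for both $\mu_X^P$ and $\nu$. Define $\tilde f^-$ analogously using backward averages over $n = -1, -2, \dots$; these agree with $\tilde f$ on a full-measure subset for each $a$-invariant probability. I claim three a.e.\ invariance properties of $\tilde f$, each via a Mautner-type argument:
\begin{itemize}
\item For $u$ in the contracting horospherical $U_a^- := \{u \in G : a^n u a^{-n} \to 1 \text{ as } n \to +\infty\}$, the uniform continuity of $f$ together with $a^n u a^{-n} \to 1$ gives $|f(a^n u x) - f(a^n x)| \to 0$, whence $\tilde f(ux) = \tilde f(x)$ pointwise on $E_f^+$.
\item Symmetrically, $\tilde f^-$ is $U_a^+$-invariant pointwise on the backward set. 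Combining with $\tilde f = \tilde f^-$ a.e.\ and the $U_a^+$-invariance of $\mu_X^P$ (a consequence of the hypothesis $U_a^+ \subseteq P$), one extracts $U_a^+$-invariance of $\tilde f$ on a $\mu_X^P$-full set.
\item By \cref{keyprop} and uniform continuity of $f$, $\tilde f$ is constant on $\RR$-orbits $\mu_X^P$-a.e.
\end{itemize}

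In a connected semisimple group $G$, the two opposite horospherical subgroups $U_a^+$ and $U_a^-$ together generate $G$ as an abstract group: each root group $U_\alpha$ either lies in $U_a^\pm$ directly, or (when $\alpha(a) = 0$) is obtained as a commutator $[U_\beta, U_\gamma]$ with $\beta + \gamma = \alpha$ and $\beta(a) = -\gamma(a) \neq 0$. Expressing $c \in C_G(a) \subseteq G$ as a word in $U_a^\pm$-generators and iterating the invariances above is intended to yield $\tilde f(cx) = \tilde f(x)$ for $\mu_X^P$-a.e.\ $x$. Once this is in hand, the conclusion follows:
\[ \int_X f\, d\nu \;=\; \int_X \tilde f \, d\nu \;=\; \int_X \tilde f(cx)\,d\mu_X^P(x) \;=\; \int_X \tilde f\,d\mu_X^P \;=\; \int_X f\,d\mu_X^P, \]
where the first equality applies \cref{PointwiseErgThm} to the $a$-invariant measure $\nu$, and the third uses the pointwise $c$-invariance of $\tilde f$. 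Since $f$ is arbitrary, $\nu = \mu_X^P$.

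The main obstacle is the measure-theoretic bookkeeping hidden in the word-iteration: pushing the pointwise $U_a^\pm$-invariance of $\tilde f$ through a product factorization of $c$ requires each intermediate point to lie in the relevant full-measure set, which in turn demands quasi-invariance of $\mu_X^P$ under every generating factor. Quasi-invariance under $U_a^+$ is immediate from $P$-invariance, but for $U_a^-$-factors an additional argument is needed; this is where one expects to invoke the Radon-Nikodym formula of \cref{RadonNikodym} applied to $\mu_X = \int_K k_* \mu_X^P \, dm_K$ (\cref{FurstenburgMuP}), together with the bi-$K$-invariance of $\mu_G$, to control the measure class of $\mu_X^P$ along the unstable direction.
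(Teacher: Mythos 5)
There is a genuine gap, and it occurs at the two places where your argument has to do real work. First, the generation claim is false in the generality in which \cref{C_G(a)} must be applied. For semisimple but non-simple $G$, say $G = G_1 \times G_2$ with $a \in A \cap G_1$, every root of~$G_2$ vanishes on~$a$ and cannot be written as $\beta + \gamma$ with $\beta(a) = -\gamma(a) \neq 0$, so $\langle U_a^+, U_a^- \rangle$ lies in~$G_1$, while $C_G(a) \supseteq G_2$; hence $c$ admits no word decomposition in $U_a^\pm$ at all. This is not a peripheral case: in \cref{GeneralPropagatingPf} the corollary is invoked precisely for such an~$a$ (one centralizing a whole factor~$H$), in order to conclude $H$-invariance. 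Second, even where the generation holds (e.g.\ singular $a$ in a simple group), the iteration step is not mere bookkeeping but the heart of the matter: your $U_a^+$-invariance of $\tilde f$ is only an a.e.\ statement relative to $\mu_X^P$ (and only for a.e.\ $u \in U_a^+$, not the specific letters of a word), so applying it at an intermediate point $u_{i+1} \cdots u_m x$ requires $(u_{i+1} \cdots u_m)_* \mu_X^P$ to be absolutely continuous with respect to $\mu_X^P$, i.e.\ quasi-invariance of $\mu_X^P$ under $U_a^-$-letters. Nothing supplies this: \cref{RadonNikodym} and \cref{FurstenburgMuP} control the measure class of $\mu_X$, not of $\mu_X^P$, and since $U_a^- \not\subseteq P$ there is no reason for $\mu_X^P$ to be even quasi-invariant in that direction (measures of this type are typically singular transverse to~$P$). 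Indeed, if invariance properties of $\mu_X^P$ under both $U_a^+$ and $U_a^-$ could be had this cheaply, one would obtain $G$-invariance directly and short-circuit the entire propagation argument of \cref{propagating}, which is a strong sign the route cannot work as stated.

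The paper's proof avoids both problems by never leaving the stable side: it only uses $U_a^+ \subseteq P$ (so $\mu_X^P$ is genuinely $U_a^+$-invariant), together with a recurrence argument. Since $\pi_*(c_*\mu_X^P)$ is invariant under the conjugate parabolic $cPc^{-1}$, \cref{PActsMinimal} forces its support to be all of $G/\Gamma$, so some $a^i c x$ returns near~$x$ in the base; the small displacement is factored as $g^- u^+$ with $g^- \in P_a^-$ bounded under forward conjugation and $u^+ \in U_a^+$, and then the global contraction of \cref{keyprop} (together with \cref{LipOnLeaves}) shows that $x_c = a^i c x$, which is Birkhoff generic for $c_*\mu_X^P$, has the same Birkhoff averages as $u^+ x$, which is generic for $\mu_X^P$. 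Your proposal uses \cref{keyprop} only to make $\tilde f$ constant on $\RR$-orbits, but it is exactly this comparison of generic points after recurrence that replaces the unavailable $U_a^-$-step in your scheme.
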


\begin{proof}[Idea of proof]
Let $c \in C_G(A)$. We wish to show $c_* \mu_X^P = \mu_X^P$. 

The measure $\mu_X^P$ might not be ergodic for the action of~$a$, so we should work with ergodic components of the measure, but, for simplicity, we ignore this technical issue. (See \cref{C_G(a)Details} for a comment on this.)
Then, by the Pointwise Ergodic Theorem~\pref{PointwiseErgThm}, there exists $x \in X$, such that, for all $f \in C(X)$, we have
	\begin{align} \label{C_G(a)-Birkhoff}
	 \lim_{n \to \infty} \frac{1}{n} \sum_{i = 1}^n f(a^i \, x) = \int_X f \, d\mu_X^P 
	 . \end{align}
We say that $x$ is a ``Birkhoff-generic point'' for~$\mu_X^P$ (w.r.t.~$a$),
and we call the average on the left-hand side a ``Birkhoff average\rlap.''
Furthermore, since~\pref{C_G(a)-Birkhoff} holds for $\mu_X^P$-a.e.\ $x \in X$, we see from \cref{keyprop} that we may assume 
	\begin{align} \label{C_G(a)Pf-contract}
	\text{$a^n \, x \ - \ a^n \, y \quad \tends \quad 0$ \quad for all $y \in x + \RR$}
	 . \end{align}

Since $\mu_X^P$ is $P$-invariant, the measure $c_*\mu_X^P$ is ${}^c\!P$-invariant, where ${}^c\!P = c P c^{-1}$ (a conjugate of~$P$).
Define $\pi \colon X \to G/\Gamma$ by $\pi \bigl( [(h, z)] \bigr) = h \Gamma$, so $\pi$ is a well-defined $G$-equivariant map \fullcsee{Xbundle}{equi}.
Then $\pi_* (c_*\mu_X^P)$ is a ${}^c\!P$-invariant probability measure on $G/\Gamma$, so we see from \fullcref{PActsMinimal}{set} that its support is all of $G/\Gamma$. Since $\pi(cx)$ is Birkhoff generic for this measure (w.r.t.~$a$), we conclude that the $\langle a \rangle$-orbit of~$\pi(cx)$ is dense in $G/\Gamma$. Hence, we may choose $i \in \ZZ$, such that $\pi(a^i c x)$ is very close to~$\pi(x)$.

Now, if we let 
	\[ P_a^- = \bigset{ g \in G }{ \begin{matrix} \text{$a^{n} g a^{-n}$ is bounded} \\[-2pt] \text{for all $n \ge 0$} \end{matrix}} ,\]
then the Lie algebra~$\Lie g$ is the direct sum of the Lie algebra of~$P_a^-$ and the Lie algebra of~$U_a^+$. Hence, the conclusion of the preceding paragraph implies there exist small $g_- \in P_a^-$ and $u^+ \in U_a^+$, such that 
	\begin{align*} 
	\pi(a^i \, cx) = g^- u^+ \, \pi(x) = \pi(g^- u^+ x) 
	. \end{align*}
Writing $x = [(h, z)]$ with $h \in G$ and $z \in Z$, this means there exists $\gamma \in \Gamma$, such that
	\begin{align*} 
	a^i \, ch \gamma  = g^- u^+ h
	, \end{align*}
so (recalling that the $\RR$-orbits in~$Z$ are $\Gamma$-invariant) we have
	\begin{align} \label{C_G(a)Pf-SameLeaf}
	x
	&= c^{-1}a^{-i} a^i c x 
	= c^{-1}a^{-i} [(a^i c h, z)] 
	= c^{-1}a^{-i} [(a^i c h \gamma, \gamma^{-1} z)]
	\\&\notag
	= c^{-1}a^{-i} [(g^- u^+ h, \gamma^{-1} z)]
	 \in  c^{-1}a^{-i} [(g^- u^+ h, z)] + \RR
	= c^{-1}a^{-i} g^- u^+ x + \RR
	. \end{align}

Let $x_c = a^i c x$ and $x_0 = u^+ x$. 
We see from~\pref{C_G(a)Pf-SameLeaf} and~\pref{C_G(a)Pf-contract} that
	\[ a^n x - a^n (c^{-1}a^{-i} g^- x_0) \to 0 . \]
Then, since $a^i c$ commutes with~$a^n$, and is uniformly Lipschitz on the $\RR$-orbits \csee{LipOnLeaves}, we can multiply by $a^i c$ to conclude that
	\begin{align} \label{C_G(a)Pf-xcContract}
	a^n x_c - a^n(g^- x_0) \to 0 
	. \end{align}
However: 
	\begin{itemize}
	\item $d(a^n x_0, a^n x_c) \le d( a^n x_0, a^n g^- x_0) + d( a^n x_c, a^n g^- x_0)$.
	\item Since $g_- \in P^-_a$, we know that $d( a^n x_0, a^n g^- x_0)$ stays bounded at about the size of~$g^-$, so it is uniformly small.
	\item $d( a^n x_c, a^n g^- x_0) \le d_{\leaf}(a^n x_c, a^n g^- x_0) \to 0$ (by \pref{C_G(a)Pf-xcContract}).
	\end{itemize}
Therefore $a^n x_0$ is close to $a^n x_c$ for all~$n$, so 
	\[ \text{$x_0$ and $x_c$ have almost the same Birkhoff averages.} \]

Now, we have another technical issue. Since $x_0 = u^+ \, x$ and $x$ is Birkhoff generic for $\mu_X^P$, we would like to be able to say that $x_0$ is Birkhoff generic for $u^+_* \mu_X^P = \mu_X^P$. However, $u^+$ does not centralize~$a$ (far from it!), so this is not obvious. It is true for most values of~$u^+$ though: since $\mu_X^P$ is $U_a^+$-invariant, one can show (for a.e.~$x$) that the statement is true for $m_{U_a^+}$-a.e.\ choice of~$u^+$ in~$U_a^+$. By perturbing the generic point~$x$, we can change the value of~$u^+$ to get it into the good set (see \cite[p.~50]{DeroinHurtado}). Therefore, we can assume that $x_0$ is indeed Birkhoff generic for~$\mu_X^P$. 

On the other hand, $x_c$ is Birkhoff generic for $c_* \mu_X^P$. Combining this with the conclusions of the preceding two paragraphs, 
we conclude that $\mu_X^P$ is $\epsilon$-close to $c_* \mu_X^P$ (for every $\epsilon > 0$). By letting $\epsilon$ tend to~$0$, we conclude that $\mu_X^P = c_* \mu_X^P$. 
\end{proof}

\begin{rem}
The assumption in \cref{C_G(a)} that $U_a^+ \subseteq P$ simply means that $a$ is in a particular closed Weyl chamber in~$A$.
\end{rem}

\begin{note} \label{NotVacuous}
\Cref{C_G(a)} is not vacuous: there is an element $a$ of~$A$, such that $\chi_P(a) < 0$ and $U_a^+ \subseteq P$ \csee{ChiNontriv}.
\end{note}

\subsection{Completion of the proof by propagating invariance}

We can now complete the proof of \cref{padicThm}, by obtaining the following contradiction to \fullcref{NotInvt}{P}. (This is where we use the assumption that $\rank_{\QQ_p}G \ge 2$. If $G$ has rank one, then $C_G(a)$ is equal to~$A$, so \cref{C_G(a)} cannot show that $\mu_X^P$ is invariant under anything more than~$A$.) The contradiction implies that our assumption~\pref{AssumeActsOnR} that there is a nontrivial, orientation-preserving action of~$\Gamma$ on~$\RR$ must be false. So it does indeed complete the proof of \cref{padicThm}. 

\begin{cor}[``Propagating Invariance'' {\cite[Thm.~8.1 (and~\S9)]{DeroinHurtado}}] \label{propagating}
The measure $\mu_X^P$ is $G$-invariant. 
\end{cor}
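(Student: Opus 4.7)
The strategy is to combine \cref{C_G(a)} applied to multiple elements of~$A$ with the existing $P$-invariance, using the rank-$\ge 2$ hypothesis to generate all of~$G$. Let $\alpha_1, \ldots, \alpha_r$ denote the simple roots of~$A$ determined by~$P$, where $r = \rank_{\QQ_p} G \ge 2$; the unipotent radical of~$P$ is generated by the root subgroups $U_\beta$ with $\beta$ positive. For each index~$i$, the aim is to produce an element $a_i \in A$ lying on the wall $\ker \alpha_i$ of the closed positive Weyl chamber (so $\alpha_i(a_i) = 0$ and $\alpha_j(a_i) > 0$ for every $j \neq i$) that also satisfies $\chi_P(a_i) < 0$. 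Because every positive root is nonnegative on the closed positive Weyl chamber, the expanding horospherical $U_{a_i}^+$ is a product of positive root subgroups, so $U_{a_i}^+ \subseteq P$; hence \cref{C_G(a)} applies and $\mu_X^P$ is invariant under $C_G(a_i) \supseteq \langle A, U_{\alpha_i}, U_{-\alpha_i}\rangle$. In particular, $\mu_X^P$ becomes invariant under the negative root subgroup $U_{-\alpha_i}$.

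Once $U_{-\alpha_i}$-invariance is established for every simple root~$\alpha_i$, I would invoke the standard structural fact that $\langle P, U_{-\alpha_1}, \ldots, U_{-\alpha_r}\rangle = G$: iterated commutators of the $U_{-\alpha_i}$ generate the unipotent radical~$N^-$ of the opposite parabolic, and $\langle P, N^-\rangle = G$. This yields $G$-invariance of~$\mu_X^P$, giving the required contradiction with \fullcref{NotInvt}{P} and completing the proof of \cref{padicThm}.

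The main obstacle is guaranteeing the existence of a suitable~$a_i$ on every wall. Since $\chi_P$ is a nonzero linear functional on the $r$-dimensional vector space~$\Lie{a}$ (nontriviality follows from \cref{NotVacuous}), the open half-space $\{\chi_P < 0\}$ meets the interior of the closed positive Weyl chamber; when $r \ge 2$, this half-space typically also meets the relative interior of each codimension-one wall, producing the desired~$a_i$. The delicate case is when $\chi_P$ happens to be a negative scalar multiple of a single simple root~$\alpha_{i_0}$, so that $\chi_P$ vanishes identically on the wall $\ker\alpha_{i_0}$ and no admissible element lies there. I expect this to be the hardest step. I would handle it by a bootstrapping argument: applying \cref{C_G(a)} on the other walls first gives invariance under a strict parabolic $Q \supsetneq P$, after which re-running the construction with a minimal parabolic contained in~$Q$ that is not a conjugate of~$P$ produces a new Lyapunov functional no longer parallel to~$\alpha_{i_0}$, which supplies the missing invariance under $U_{-\alpha_{i_0}}$. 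In every branch of the argument it is precisely the hypothesis $\rank_{\QQ_p} G \ge 2$ that provides enough directions in $\Lie{a}^*$ to cover every negative simple-root subgroup and thereby exhaust all of~$G$.
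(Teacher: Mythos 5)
Your skeleton---produce elements $a$ on the walls of $\weyl_P$ with $\chi_P(a)<0$, apply \cref{C_G(a)}, and combine the resulting centralizer-invariance with $P$-invariance to generate $G$---is exactly the paper's strategy, and your ``easy'' case coincides with its first case. The gaps are in the case analysis and in the bootstrap. First, it is not true that the halfspace $\halfspace=\{\chi_P<0\}$ ``typically'' meets every wall: all that is known (\cref{NotVacuous}) is that $\chi_P$ is negative somewhere on $\weyl_P$, and the hyperplane $\ker\chi_P$ may pass through the interior of the chamber, in which case $\chi_P$ is \emph{strictly positive} on an entire wall and no admissible $a_i$ exists there. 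This is not a degenerate situation (it is \fullcref{propagate-fig}{interior} in the paper), and it is not the case you flag as delicate, where $\chi_P$ is proportional to a simple root and merely vanishes on a wall; your proposal neither notices nor treats it. Second, the mechanism of your bootstrap is wrong: once $\mu_X^P$ is invariant under a parabolic $Q\supsetneq P$, hence under another minimal parabolic $P'\supseteq A$ inside $Q$ (all minimal parabolics are conjugate, so ``not a conjugate of $P$'' cannot be what you mean), the uniqueness in \cref{FurstenburgMuP} forces $\mu_X^{P'}=\mu_X^P$ and therefore $\chi_{P'}=\chi_P$; no ``new Lyapunov functional'' ever appears. What actually rescues the argument---and handles the figure-(b) case as well---is the opposite observation: the functional is \emph{unchanged}, and the adjacent chamber $\weyl_{P'}$ across a good wall lies entirely in the same halfspace $\halfspace$, so the easy case applies to $P'$. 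As stated, your bootstrap rests on a false expectation rather than on this fact.

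More seriously, there is a configuration in which no bootstrap of this kind can succeed, and your proposal has no substitute for the paper's treatment of it. Since $G$ is only assumed semisimple of total rank $\ge 2$, it may have a rank-one simple factor $G_1$ (e.g.\ $G=\SL(2,\QQ_p)\times\SL(2,\QQ_p)$), and $\chi_P$ may be a negative multiple of the simple root $\alpha_{i_0}$ of $G_1$. Then $\chi_{P'}=\chi_P$ for every minimal parabolic you can ever reach, so the propagation never crosses the hyperplane $\ker\alpha_{i_0}$: you obtain invariance only under the parabolic $Q$ (equivalently, under the chambers contained in $\overline{\halfspace}$) and never under $U_{-\alpha_{i_0}}$. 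The paper deals with exactly this situation by a genuinely different argument (\cref{GeneralPropagatingPf-hard} of \cref{GeneralPropagatingPf}): writing $G=G_1\times H$, it shows $\mu_X$ is $KH$-invariant, deduces via \cref{RadonNikodym} that $\Gamma_0=\Gamma\cap KH$ acts by translations on the $\RR$-orbits and hence is abelian, and contradicts this using irreducibility of $\Gamma$ and openness of $K$, which make the projection of $\Gamma_0$ to $G_1$ Zariski dense in the non-abelian group $G_1$. Without an argument of this kind your proof is incomplete precisely in the case the paper singles out as the hard one.
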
 

\begin{proof}[Idea of proof] 
For concreteness, let us assume $G = \SL(3,\QQ_p)$. (See \cref{GeneralPropagatingPf} for an explanation of the general case.) Let $\weyl_P$ be the closed Weyl chamber corresponding to~$P$, so $U^+_a \subseteq P$ for all $a \in \weyl_P$. We may assume $P$ is upper triangular, as suggested in \fullcref{G=SL3}{P}.
Let 
	\[ \halfspace = \{\, a \in A \mid \chi_P(a) < 0 \,\} . \] 
This is an open halfspace in~$A$. 

\refstepcounter{caseholder}

\begin{case} \label{propagatingPf-easy}
Assume $\chi_P(a) < 0$ for all nontrivial $a \in \weyl_P$.
\end{case}
Let $a_1$ and~$a_2$ be on the two walls of~$\weyl_P$, as pictured in \fullcref{propagate-fig}{easy}. 
\begin{figure}
\[ \begin{matrix}
\includegraphics{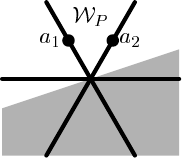} 
%
%
%
%
%
%
%
%
%
& \hskip 5mm &
\includegraphics{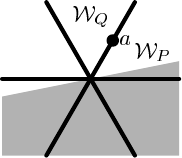} 
%
%
%
%
%
%
%
& \hskip 5mm &
\includegraphics{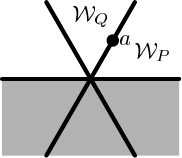} 
%
%
%
%
%
%
%
\\ \hbox{(a)} && \hbox{(b)} && \hbox{(c)} 
\end{matrix}
\]
\caption{The halfspace $\halfspace$ is unshaded.}
\label{propagate-fig}
\refstepcounter{subfigure}
\label{propagate-fig-easy}
\refstepcounter{subfigure}
\label{propagate-fig-interior}
\refstepcounter{subfigure}
\label{propagate-fig-bdry}
\end{figure}
Specifically, we may let
	\[ a_1 = \left[ \begin{matrix} 1/p &  &  \\  & 1/p &  \\  &  & p^2 \end{matrix} \right]
	\qquad \text{and} \qquad
	a_2 = \left[\begin{matrix} 1/p^2 \\ & p \\ && p \end{matrix}\right]
	. \]
By the assumption of this \lcnamecref{propagatingPf-easy}, we have $\chi_P(a_1) < 0$ and $\chi_P(a_2) < 0$. Also, we know $U^+_{a_1} \subseteq P$ and $U^+_{a_2} \subseteq P$, because $a_1, a_2 \in \weyl_P$. Indeed,
	\[ U^+_{a_1} = \left[ \begin{smallmatrix}  1 & & * \\  & 1 & *  \\ && 1 \end{smallmatrix}\right] \subset P
	\qquad \text{and} \qquad
	U^+_{a_1} =  \left[\begin{smallmatrix} 1 & * & * \\ & 1 &  \\ && 1 \end{smallmatrix}\right] \subset P . \]
So we see from \cref{C_G(a)} that $\mu_X^P$ is invariant under
	\[ C_G(a_1) =  \left[ \begin{smallmatrix} * &*\\ * & * \\ && 1 \end{smallmatrix}\right]
	\qquad \text{and} \qquad
	C_G(a_2) =  \left[\begin{smallmatrix} 1 & \\ & * & * \\ & *& * \end{smallmatrix}\right]
	. \]
These two centralizers generate~$G$, so we conclude that $\mu_X^P$ is invariant under all of~$G$. 

\begin{case}
The remaining case.
\end{case}
We know that:
	\begin{itemize}
	\item $\halfspace$ contains some element of~$\weyl_P$ (by \cref{NotVacuous}), 
	and
	\item $\halfspace$ does not contain every nontrivial element of~$\weyl_P$ (by the assumption of this case).
	\end{itemize}
So either the boundary of~$\halfspace$ passes through the interior of~$\weyl_P$ (as in \fullcref{propagate-fig}{interior}), 
or the boundary of~$\halfspace$  contains one the boundary rays of~$\weyl_P$, and the rest of~$\weyl_P$ is in~$\halfspace$ (as in \fullcref{propagate-fig}{bdry}). 
In either case, one of the two boundary rays of~$\weyl_P$ is in~$\halfspace$.

Let $\weyl_Q$ be the Weyl chamber on the other side of this boundary ray, with corresponding minimal parabolic subgroup~$Q$, and let $a$ be a nontrivial point on the ray. Then $a \in \halfspace$, which means $\chi_P(a) < 0$. (Also, $a \in \weyl_P$, so $U_a^+ \subseteq P$.) So we see from \cref{C_G(a)} that $\mu_X^P$ is $C_G(a)$-invariant. However, we also know by the definition of~$\mu_X^P$ that it is $P$-invariant. So $\mu_X^P$ is invariant under the subgroup $\langle C_G(a), P \rangle$. This is a parabolic subgroup of~$G$ (because every subgroup that contains a parabolic subgroup is also a parabolic subgroup). In fact, since $a$ is on the boundary between $\weyl_P$ and~$\weyl_Q$, this subgroup is equal to $\langle P, Q \rangle$, and therefore contains~$Q$. So $\mu_X^P$ is $Q$-invariant.

By the definition of~$\mu_X^P$, we also have $\mu_X = \int_K k_* \mu_X^P \, dm_K(k)$ \csee{FurstenburgMuP}. Hence, the uniqueness in \cref{FurstenburgMuP} implies that $\mu_X^Q = \mu_X^P$. Then we see from \cref{chiDefn} that $\chi_Q = \chi_P$. Since it is clear from \cref{propagate-fig}(\ref{propagate-fig-interior},\ref{propagate-fig-bdry}) 
that every nontrivial element of~$\weyl_Q$ is in~$\halfspace$, we have $\chi_P(a) < 0$ for all nontrivial $a \in \weyl_Q$. Then, since $\chi_Q = \chi_P$, we conclude that the same inequality holds with $\chi_Q$ in the place of~$\chi_P$. Hence, \cref{propagatingPf-easy} applies with the parabolic subgroup~$Q$ in the place of~$P$.
\end{proof}

\subsection{Proof of \texorpdfstring{\cref{keyprop}}{the Key Proposition}} \label{keypropPfShort}
Let $a \in A$, such that $\chi_P(a) < 0$. We know from \cref{chiPLocalOverview} that the action of~$a$ contracts small intervals in $\RR$-orbits (a.e.). In order to establish \cref{keyprop}, we need to show that the contraction happens on the entire $\RR$-orbit. To this end, we will employ contraction by generic elements of~$G$, rather than only by elements of~$A$.

For $g \in G$, we have $\mu_X(gX) = \mu_X(X) = 1$, so (by \cref{RadonNikodym}) $\int_X D^\leaf_g \, d\mu_X = 1$. As $\mu_X$ is not $G$-invariant (see \fullcref{NotInvt}{G}), we also know that $D^\leaf_g$ is not identically equal to~$1$ for all~$g$, so, since $\log$ is a concave function, we conclude from Jensen's Inequality that 
	\begin{align} \label{Liap}
	 \int_G \int_X \log D^\leaf_g(x) \, d\mu_X(x) \,d\mu_G(g) < 0 
	 . \end{align}
This implies that (on average) applying a random element of~$G$ contracts distances that are small. Much like what we saw in \cref{chiPLocalOverview} for~$a^n$, this implies that repeated application of random elements of~$G$ will contract small intervals in the $\RR$-orbits. However, we also have the following global contraction property of the $\Gamma$-action on each $\RR$-orbit:

\begin{prop}[see \cref{EveryActContracts}, {\cite[Prop.~6.3]{DeroinHurtado}}] \label{EveryActContractsOverview}
For every action of\/~$\Gamma$ on~$\RR$ without fixed points, there is a compact subset~$I$ of\/~$\RR$, such that for every $\epsilon > 0$ and all $s,t \in \RR$, there exists $\gamma \in \Gamma$, such that
	\[ \text{$|\gamma s - \gamma t| < \epsilon$ \ and \ $\{\gamma s, \gamma t\} \subset I$}  . \]
\end{prop}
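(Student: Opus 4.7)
The plan is to reduce a general action of~$\Gamma$ on~$\RR$ without global fixed points to a ``ping-pong'' dynamical situation and then to exploit the resulting contraction. The crucial algebraic input from~$\Gamma$ is that its abelianization is finite \fullcsee{SSLattFacts}{abel}. Using this, I would first rule out the possibility that the action preserves a Radon measure: any action on~$\RR$ preserving a Radon measure is semi-conjugate to an action by translations (see \cite[\S2.2.4]{DeroinNavasRivasBook}) and therefore factors through a homomorphism $\Gamma \to \RR$; since every such homomorphism is trivial, and an $\RR$-action with no global fixed point and trivial translation homomorphism would itself have to be trivial, the hypothesis of no global fixed point forces a contradiction. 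Consequently, the action preserves no Radon measure.

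Second, with no invariant Radon measure in hand, I would invoke the standard structure theorem for such actions on~$\RR$ (cf.\ \cite[Chap.~4]{DeroinNavasRivasBook}) to extract a ``resilient pair'' $f, g \in \Gamma$: there is a compact interval $J$ and disjoint subintervals $J_f, J_g \subset J$ such that, for suitable exponents $n_f, n_g$, one has $f^{n_f}(J) \subset J_f$ and $g^{n_g}(J) \subset J_g$, with each map uniformly contracting on~$J$. In particular, for any $\epsilon > 0$, some word $w \in \langle f, g \rangle$ shrinks~$J$ into a subinterval of diameter less than~$\epsilon$. Taking $I = J$, it then suffices to prove that every pair $s, t \in \RR$ can be dragged into~$J$ by a single element $\gamma_0 \in \Gamma$, because $\gamma = w\gamma_0$ will then satisfy both $\{\gamma s, \gamma t\} \subset I$ and $|\gamma s - \gamma t| < \epsilon$.

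The main obstacle is precisely this last point. Pushing a \emph{single} point into~$J$ is immediate from the ping-pong dynamics (iterate $f$ or $g$ toward an attracting fixed point in~$J$), but arranging for \emph{both} $s$ and~$t$ to land in~$J$ under a common element~$\gamma_0$ is delicate when $s$ and $t$ are separated by a repelling fixed point of $f$ or~$g$, since then iterates send them to opposite ends of~$\RR$. I would resolve this by exploiting the Markov-like combinatorics of the pair $(f, g)$: interleaving $f^{\pm 1}$ and $g^{\pm 1}$ realizes arbitrary joint itineraries, and a symbolic-dynamics argument shows that any pair~$\{s, t\}$ can be herded into a common attracting region inside~$J$ in finitely many alternations. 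Once this capture step is established, the rest of the proof is a routine application of the contracting word~$w$.
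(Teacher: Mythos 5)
There is a genuine gap. Your reduction runs ``no global fixed point $+$ finite abelianization $\Rightarrow$ no invariant Radon measure $\Rightarrow$ resilient pair with uniform contraction into a compact interval,'' but the second implication is false: it omits precisely the case in which the action commutes with a fixed-point-free homeomorphism~$\varphi$ of~$\RR$ (i.e., the action is essentially the lift of a circle action to the line). Such actions need not preserve any Radon measure, yet they can never satisfy the conclusion of the proposition: if $\varphi$ is normalized to be the translation $t \mapsto t+1$, then any two points $s$ and $t = s+1$ satisfy $|\gamma s - \gamma t| = 1$ for \emph{every}~$\gamma$, so no contraction below $\epsilon < 1$ is possible, and in particular no resilient pair with global contraction exists. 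This is why the correct statement for general finitely generated groups is a four-way alternative (fixed point, nontrivial homomorphism to~$\RR$, commuting fixed-point-free homeomorphism, or the contraction property), which is exactly what the paper quotes from Deroin--Kleptsyn--Navas--Parwani \cite[Thm.~7.1]{DeroinEtAl-Symmetric}. Ruling out the commuting-homeomorphism case requires lattice-specific input that your argument never uses: the paper passes to the induced action on $\RR/\ZZ = S^1$, invokes the theorem that every circle action of~$\Gamma$ has a finite orbit \fullcsee{SSLattFacts}{CircleAction}, and then uses finite abelianization once more to force a global fixed point on~$\RR$, a contradiction. Without some such argument your proof cannot be completed.

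A secondary issue: even within the ping-pong case, the ``capture'' step you flag as the main obstacle (herding both $s$ and~$t$ into~$J$ by a single group element) is simply asserted via ``a symbolic-dynamics argument shows.'' That step is the substantive dynamical content of the contraction property, and in the literature it is part of the proof of the quoted theorem rather than a routine consequence of having a resilient pair; as written it is a second unproved claim, not a detail. The paper avoids both problems by citing the four-case theorem directly and only doing the lattice-theoretic work needed to eliminate the three bad cases.
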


By using this, it is possible to prove that repeated application of random elements of~$G$ yields contraction on entire $\RR$-orbits, rather than only locally, and that the contraction is exponentially fast:

\begin{prop}[see \cref{RandomContract}, {\cite[Props.~6.4 and~6.8]{DeroinHurtado}}] \label{RandomContractOverview}
There is a constant $\delta < 1$, such that, for $\mu_X$-a.e.~$x \in X$, for all $y \in \RR x$, and for $\mu_G^\infty$-a.e.\ sequence $(g_i)_{i=1}^\infty$ of elements of~$G$, we have
	\[ \text{$|g_n g_{n-1} \cdots g_1 x \ - \ g_n g_{n-1} \cdots g_1 y| < \delta^n$ \quad for all sufficiently large~$n$} . \]
\end{prop}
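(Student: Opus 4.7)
The plan is to combine the Lyapunov-type inequality~\pref{Liap} with the global-contraction property of \cref{EveryActContractsOverview} to promote local contraction on $\RR$-orbits to global contraction.

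\textbf{Step 1 (Lyapunov and local contraction).} Form the skew product
\[ T \colon X \times G^\NN \to X \times G^\NN , \qquad T \bigl( x, (g_i)_{i \ge 1} \bigr) = \bigl( g_1 x, (g_{i+1})_{i \ge 1} \bigr) , \]
which preserves $\mu_X \times \mu_G^\NN$ by stationarity of $\mu_X$ \csee{muXStationaryStated}. The chain rule identifies
\[ \log \dleaf_{g_n \cdots g_1}(x) = \sum_{i=0}^{n-1} \log \dleaf_{g_{i+1}} (g_i \cdots g_1 x) \]
as a Birkhoff sum for $T$. Applying the Birkhoff ergodic theorem to the ergodic components of~$T$, the left-hand side divided by~$n$ converges $\mu_X \times \mu_G^\NN$-a.e.\ to some $\lambda(x, \omega)$, with $\int \lambda < 0$ by~\pref{Liap}. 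Fix $\delta_0 \in (0,1)$ with $\int \lambda < \log \delta_0$; by Egorov there is a set $E \subset X \times G^\NN$ of positive measure on which $\lambda < \log \delta_0$ and the convergence is uniform. Repeating the local-contraction argument used for \cref{chiPLocalOverview}, one obtains for each $(x, \omega) \in E$ a scale $\epsilon(x, \omega) > 0$ such that $|g_n \cdots g_1(x + t) - g_n \cdots g_1 x| < \delta_0^n$ whenever $|t| < \epsilon(x, \omega)$ and $n$ is sufficiently large.

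\textbf{Step 2 (Global extension).} For arbitrary $y \in x + \RR$ we must reduce to the local case. Since $\Gamma$ has no global fixed point on $Z$ \fullcsee{AlmPerSpace}{act}, its restriction to the $\RR$-orbit $x + \RR$ is a fixed-point-free action by orientation-preserving homeomorphisms \fullcsee{AlmPerSpace}{orpresLip}, so \cref{EveryActContractsOverview} produces, for any $\eta > 0$, an element $\gamma \in \Gamma$ carrying $x$ and $y$ into a fixed compact interval with $|\gamma x - \gamma y| < \eta$. The plan is to show that, for $\mu_G^\NN$-a.e.\ trajectory $\omega$, the random walk $R_n := g_n \cdots g_1$ almost surely produces, at infinitely many times~$n$, a product lying in a small neighborhood of such a~$\gamma$ while simultaneously $(R_n x, \sigma^n \omega) \in E$. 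Absolute continuity of $\mu_G$ \fullcsee{nuGDefn}{G} guarantees that at each step the random increment can approach any prescribed element of $G$ with positive probability; combined with recurrence of the stationary walk on $X$ and a Borel--Cantelli argument along return times to a slight enlargement of~$E$, this yields such~$n$ almost surely. Once $R_n$ has brought $x$ and $y$ to within the local scale $\epsilon(R_n x, \sigma^n \omega)$, Step~1 applied to the shifted trajectory gives $|R_m x - R_m y| < \delta_0^{m - n}$ for all sufficiently large $m - n$; absorbing the factor $\delta_0^{-n}$ into a slightly larger $\delta \in (\delta_0, 1)$ then yields $|R_m x - R_m y| < \delta^m$ for all sufficiently large~$m$.

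\textbf{Main obstacle.} The delicate part is the coordination in Step~2: one must simultaneously control three a priori unrelated random quantities along the trajectory, namely the hitting time into the good set~$E$ (finite a.s.\ by stationarity but without any uniform bound), the local scale $\epsilon(R_n x, \sigma^n \omega)$ (which may shrink arbitrarily along the orbit), and the probability that the next increment approximately realizes the globally contracting $\gamma$ provided by \cref{EveryActContractsOverview}. Making this rigorous likely requires quantitative (large-deviation) refinements of the Birkhoff theorem, and it is here that the bi-$K$-invariance and symmetry of~$\mu_G$ \fullcsee{nuGDefn}{G-Kinvt} enter, ensuring that the random walk is mixing enough to approximate arbitrary group elements at arbitrary times.
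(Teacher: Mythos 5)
Your Step~1 is fine in spirit (it is essentially the exponential-rate part of the argument, an adaptation of the proof of \cref{chiPLocal} with \pref{Liap} in place of $\chi_P(a)<0$), but Step~2 --- the actual passage from local to global contraction, which is the whole content of the statement --- has a genuine gap, and you have in effect flagged it yourself under ``Main obstacle.'' Two concrete problems. First, the claim that ``at each step the random increment can approach any prescribed element of $G$ with positive probability'' is false as stated: $\mu_G$ has \emph{compact} support \fullcsee{nuGDefn}{G}, so approximating the contracting element $\gamma$ furnished by \cref{EveryActContracts} requires a number of steps comparable to the word length of~$\gamma$. Second, and more seriously, that $\gamma$ depends on the current separation $|R_nx-R_ny|$, which you have no a priori control over along the trajectory; since the required $\gamma$ may be arbitrarily far away, there is no uniform lower bound on the probability of realizing it in a bounded number of steps, so the Borel--Cantelli/recurrence scheme you sketch does not close. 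No large-deviation refinement of Birkhoff fixes this by itself; what is missing is a mechanism that bounds the leafwise separation along almost every trajectory.

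The paper's proof (\cref{RandomContract}) supplies exactly that mechanism, and it is the idea absent from your proposal: because of \pref{MeanDisplacement} (stationarity), the leafwise interval length $\pi_n(x+t)-\pi_n(x-t)$ is a nonnegative martingale, so by the Martingale Convergence Theorem it converges a.s.\ to a finite limit; in particular it is eventually bounded by a fixed~$\ell$. For intervals of length at most~$\ell$, \cref{EveryActContracts} together with compactness of~$X$ gives a time~$N$ and a probability $p>0$, \emph{uniform in the basepoint}, that the next $N$ steps contract the interval below~$\epsilon$; a simple Markov renewal argument ($c_-\ge p+(1-p)c_-$) then shows such contraction events occur infinitely often a.s., and the already-established convergence of the interval length forces the limit to be~$0$. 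Only after this qualitative global contraction is in hand does one invoke the Lyapunov inequality \pref{Liap} (your Step~1) to get the exponential rate $\delta^n$. Without the martingale step, or some substitute controlling the separation along the orbit, your Step~2 does not go through.
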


We have been proving the $p$-adic case in this paper, but the next part of the argument is easier to explain for real groups, so we will switch to that setting for the remainder of this \lcnamecref{keypropPfShort}:
	\[ \text{\emph{Assume, contrary to \cref{pGGamma}, that $G$ is a connected real semisimple Lie group.}} \]

The group $G$ acts by isometries on its associated symmetric space~$K \backslash G$, which is a Riemannian manifold of nonpositive sectional curvature. (The $p$-adic case uses a Bruhat-Tits building, instead of a symmetric space.) Applying a random walk in~$G$ to a point in $K \backslash G$ yields a sequence of points, and it can be shown that the negative curvature of $K \backslash G$ implies that this sequence almost surely moves along a geodesic, up to a sublinear error:

\begin{thm}[Karlsson-Margulis {\cite[Thm.~2.1]{KarlssonMargulis}}] \label{tracking}
For $\mu_G^\infty$-a.e.\ sequence $(g_i)_{i=1}^\infty$ of elements of~$G$, there is a geodesic $\mathsf{a}_t$ in~$K \backslash G$, with $\mathsf{a}_0 = K$, such that
	\[ \frac{d  \bigl( K g_n g_{n-1} \cdots g_1, \ \mathsf{a}_n \bigr)}{n} \tends 0 . \]  
\end{thm}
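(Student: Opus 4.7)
The plan is to combine Kingman's subadditive ergodic theorem, which will supply a linear drift, with the nonpositive curvature of $K\backslash G$, which will convert that drift into asymptotic tracking by a geodesic.

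First I would set $y_n(\omega) = K g_n g_{n-1} \cdots g_1$ and $\ell_n(\omega) = d(K, y_n(\omega))$, where $\omega = (g_i)_{i=1}^\infty \in G^\NN$. Since $G$ acts on $K\backslash G$ by isometries on the right, right-translation by $(g_1 \cdots g_m)^{-1}$ shows that, with the Bernoulli shift $T\omega = (g_{i+1})_{i=1}^\infty$,
\[ \ell_{m+n}(\omega) \le \ell_m(\omega) + \ell_n(T^m \omega) . \]
The support of $\mu_G$ is compact, so $\ell_1$ is bounded, and $(G^\NN, \mu_G^\infty, T)$ is ergodic. Kingman's subadditive ergodic theorem then produces a constant $\lambda \ge 0$ with $\ell_n/n \tends \lambda$ almost surely. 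If $\lambda = 0$, the conclusion holds with the constant ``geodesic'' $\mathsf{a}_t \equiv K$, so it remains to treat the case $\lambda > 0$.

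For $\lambda > 0$ I would use that the symmetric space $K\backslash G$ is a complete CAT$(0)$ space. For each $\xi$ in the visual boundary $\partial_\infty(K\backslash G)$, the Busemann function $b_\xi$ is $1$-Lipschitz and convex along geodesics. The Karlsson--Margulis construction selects a measurable map $\omega \mapsto \xi(\omega) \in \partial_\infty(K\backslash G)$ so that, after applying the subadditive/maximal ergodic theorem to suitable functionals built from the Busemann cocycle $\omega \mapsto b_{\xi}(y_1(\omega))$,
\[ \frac{-b_{\xi(\omega)}\bigl( y_n(\omega) \bigr)}{n} \tends \lambda \qquad \mu_G^\infty\text{-a.s.} \]
Defining $\mathsf{a}_t$ to be the geodesic ray from $K$ toward $\xi(\omega)$ parametrized at speed $\lambda$, one then invokes the CAT$(0)$ comparison estimate that bounds $d(y_n, \mathsf{a}_n)^2$ by a constant times $\bigl( \ell_n - (-b_{\xi(\omega)}(y_n)) \bigr) \cdot \ell_n$. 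Once both $\ell_n/n$ and $-b_{\xi(\omega)}(y_n)/n$ converge to the same $\lambda$, the difference in the first factor is $o(n)$ while $\ell_n = O(n)$, so $d(y_n, \mathsf{a}_n)^2 = o(n^2)$ and $d(y_n, \mathsf{a}_n)/n \tends 0$, as desired.

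The main obstacle, and the truly nontrivial step, is producing the direction $\xi(\omega)$ and establishing tracking along the \emph{entire} sequence rather than only a subsequence. Extracting a visual limit of $y_n$ is easy but supplies no rate of convergence, and a subsequential geodesic would not give the full statement. The selection of $\xi(\omega)$ must be canonical enough that the ergodic theorem can propagate convergence from a density-one set of indices to all $n$; this is exactly where Karlsson and Margulis deploy their horofunction/Busemann argument, and it is where the nonpositive curvature and the ergodicity of the shift must be used together in an essential way.
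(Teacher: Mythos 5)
The paper itself does not prove this statement at all: it is imported as a black box, with a citation to Karlsson--Margulis, so the only question is whether your sketch would stand on its own. Its skeleton is indeed the genuine Karlsson--Margulis strategy, and the routine parts are fine: writing $w_{m+n}=uv$ with $u=g_{m+n}\cdots g_{m+1}$ and $v=g_m\cdots g_1$, the triangle inequality together with the isometric right translation by $v^{-1}=(g_m\cdots g_1)^{-1}$ (not $(g_1\cdots g_m)^{-1}$ as you wrote, but this is cosmetic) gives the subadditivity needed for Kingman; compact support of $\mu_G$ gives integrability; the case $\lambda=0$ is trivial; and in a CAT$(0)$ space the comparison inequality $d(y_n,\gamma(t_n))^2\le \ell_n^2-t_n^2$ with $t_n=-b_{\xi}(y_n)$ does convert the Busemann estimate $t_n/n\to\lambda$ into $d(y_n,\mathsf{a}_n)/n\to 0$, exactly as you indicate.

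The genuine gap is that the step you yourself flag as ``the truly nontrivial step'' \emph{is} the theorem, and your proposal proves it by appeal to ``the Karlsson--Margulis construction,'' i.e.\ by citing the result being proved. Producing a measurable boundary point $\xi(\omega)$ with $-b_{\xi(\omega)}(y_n)/n\to\lambda$ cannot be done by applying the subadditive or maximal ergodic theorem to ``the Busemann cocycle $b_\xi(y_1)$'': the direction $\xi$ is not available in advance, so there is no fixed functional on $G^{\mathbb{N}}$ to which those theorems apply, and for a randomly chosen horofunction the quantity $-b_\xi(y_n)$ need not grow linearly at all. What Karlsson and Margulis actually do is prove a separate ergodic lemma: for a.e.\ $\omega$ and every $\varepsilon>0$ there are infinitely many ``record'' times $n$ such that $\ell_n(\omega)-\ell_{n-k}(T^k\omega)\ge(\lambda-\varepsilon)k$ for \emph{all} $1\le k\le n$; one then takes horofunctions based at the points $y_n$ along such times, extracts a limit $b_{\xi(\omega)}$ in the compact horofunction boundary, and uses the record property to verify the linear lower bound on $-b_{\xi(\omega)}(y_k)$ for all $k$, before feeding it into the CAT$(0)$ comparison. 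None of this is supplied in your sketch, so as a standalone argument it is an accurate roadmap of the cited proof rather than a proof; within the present paper that is consistent with how the result is used (as an external input), but the central lemma is missing.
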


\begin{defn}
Fix a nonempty, open subset~$\open$ of~$G$. Then $\open$ generates~$G$, so we can define the corresponding \emph{word length}: for $g \in G$, we let
	\[ \ell_O(g) = \min\{\, r \in \NN \mid g \in \open^r \,\} , \]
where $\open^r = \{\, h_1 h_2 \cdots h_r \mid \text{$h_i \in \open$ or $h_i^{-1} \in \open$} \,\}$.
\end{defn}

Geodesics in $K \backslash G$ come from one-parameter subgroups of ($K$-conjugates of)~$A$, and every element of~$A$ is $K$-conjugate to an element of~$\weyl_P$, so \cref{tracking} can be restated as follows:

\begin{cor}[{\cite[Thm.~3.18]{DeroinHurtado}}] \label{trackingA}
For $\mu_G^\infty$-a.e.\ sequence $(g_i)_{i=1}^\infty$ of elements of~$G$, there exist $k \in K$ and $\widehat {a_P} \in \weyl_P \smallsetminus \{\1\}$, such that 
	\[ \frac{\ell_O ( g_n g_{n-1} \cdots g_1 \, k \, \widehat{a_P}^{-n})}{n}  \tends 0 . \]
Furthermore, as $(g_i)_{i=1}^\infty$ varies, the element~$k$ is uniformly distributed in~$K$ \textup(with respect to the Haar measure~$m_K$\textup).
\end{cor}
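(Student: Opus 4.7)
The plan is to invoke the Karlsson-Margulis tracking theorem~\pref{tracking} and then convert its geometric conclusion into the word-length statement of the corollary, using the Cartan decomposition to identify the geodesic with an element of~$\weyl_P$ up to the $K$-action.

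First, I would apply \cref{tracking} to obtain, for $\mu_G^\infty$-a.e.\ sequence $(g_i)$, a geodesic $\mathsf{a}_t$ in $K\backslash G$ with $\mathsf{a}_0 = K$ satisfying $d(Kg_n\cdots g_1, \mathsf{a}_n)/n \to 0$. Every geodesic through the basepoint comes from a one-parameter subgroup of~$G$ whose tangent vector at~$K$ lies in $T_K(K\backslash G) \cong \Lie p$, and by the Cartan decomposition every element of~$\Lie p$ is $\mathrm{Ad}(K)$-conjugate to an element of $\log\weyl_P$. Unwinding this conjugation would produce the presentation $\mathsf{a}_t = K\widehat{a_P}^t \, k_2$ for some $\widehat{a_P} \in \weyl_P$ and $k_2 \in K$. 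The $\mu_G$-random walk on the noncompact group~$G$ has strictly positive drift (a standard consequence of $\mu_G$ being compactly supported and generating), so $\widehat{a_P} \neq \1$.

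Second, right-$G$-invariance of the metric on~$K\backslash G$ gives
\[ d(Kg_n\cdots g_1, \mathsf{a}_n) = d\bigl(K,\, K\,(g_n\cdots g_1)\, k_2^{-1}\,\widehat{a_P}^{-n}\bigr), \]
so setting $k := k_2^{-1}$ the tracking bound becomes $d(K, K h_n)/n \to 0$ with $h_n = g_n\cdots g_1\, k\, \widehat{a_P}^{-n}$. The first assertion of the corollary then follows from an elementary inequality $\ell_O(h) \le C\,d(K,Kh) + C'$ that I would derive from the Cartan decomposition $h = k_1 a k_2'$ ($k_1, k_2' \in K$, $a \in \exp\weyl_P$): compactness of~$K$ bounds $\ell_O(k_1)$ and $\ell_O(k_2')$ uniformly, while $a$ can be reached from~$\1$ in $O(\|\log a\|)$ steps by writing $a = \bigl(\exp(\log a/N)\bigr)^N$ with $N = O(\|\log a\|)$ chosen large enough to put $\exp(\log a/N)$ in~$\open$.

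For the uniform distribution of~$k$, my approach is to exploit bi-$K$-invariance of~$\mu_G$: for any fixed $k_0 \in K$ the conjugated sequence $(k_0 g_i k_0^{-1})$ has the same law as $(g_i)$, and its partial product $k_0(g_n\cdots g_1)k_0^{-1}$ projects in $K\backslash G$ to the right-$k_0^{-1}$-translate of the original partial product; hence the new tracking geodesic is $\mathsf{a}_t \cdot k_0^{-1} = K\widehat{a_P}^t\,(k_2 k_0^{-1})$. Thus the law of~$k_2$ is forced to be invariant under $k_2 \mapsto k_2 k_0^{-1}$ for every $k_0 \in K$, which identifies it with the Haar measure~$m_K$; then $k = k_2^{-1}$ is Haar-distributed as well. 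The main obstacles are bookkeeping rather than conceptual: the mild nonuniqueness of~$k_2$ when $\widehat{a_P}$ lies on a wall of~$\weyl_P$ (which forces one to make a measurable selection from its $C_K(\widehat{a_P})$-coset) and a careful setup of the word-length/Riemannian-distance dictionary.
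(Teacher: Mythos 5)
Your proposal is correct and is essentially the paper's own route: the paper likewise obtains the corollary by rewriting the Karlsson--Margulis tracking geodesic through the basepoint as $K\widehat{a_P}^{\,t}k_2$ via the Cartan decomposition, leaving the word-length comparison and the Haar-distribution of~$k$ to the citation, and your Milnor--\v{S}varc-style bound $\ell_O(h)\le C\,d(K,Kh)+C'$ together with the conjugation-symmetry argument (bi-$K$-invariance of~$\mu_G$) supplies those details correctly. Two small points to tighten: positivity of the drift (hence $\widehat{a_P}\neq\1$) uses the semisimplicity/nonamenability of~$G$ (or the Benoist--Quint result quoted immediately after the corollary), not merely that $\mu_G$ is compactly supported and generating --- a symmetric compactly supported generating measure on an amenable group can have zero drift; and $k_2$ is ambiguous modulo $C_K(\widehat{a_P})\supseteq Z_K(A)$ even when $\widehat{a_P}$ is regular, not only on a wall, so the uniform-distribution claim is properly a statement about the induced measure on that coset space, which is exactly what your symmetry argument establishes.
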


We also have the following important technical result:

\begin{prop}[Benoist-Quint {\cite[Thm.~10.9(b), p.~158]{BenoistQuintBook}}]
The element~$\widehat{a_P}$ is a well-defined element of~$\weyl_P$ that is independent of $(g_i)_{i=1}^\infty$.
\end{prop}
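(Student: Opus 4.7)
The plan is to identify $\widehat{a_P}$ with the Lyapunov vector of the Cartan projection of the random walk, and then to use the ergodicity of the Bernoulli shift on sample paths to show this vector is deterministic.

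First, I would reformulate \cref{trackingA} in terms of the Cartan projection $\kappa \colon G \to \weyl_P$, defined via the Cartan decomposition $g = k_1 \, \kappa(g) \, k_2$ with $k_1, k_2 \in K$. The tracking condition $\ell_\open\bigl(g_n \cdots g_1 \, k \, \widehat{a_P}^{-n}\bigr)/n \to 0$ translates, after embedding $\weyl_P$ in a real vector space via the cocharacter / valuation map of $A$ (which plays the role of ``$\log$'' in the $p$-adic setting), into a law-of-large-numbers statement: $\tfrac{1}{n}\, \log \kappa(g_n \cdots g_1) \to \log \widehat{a_P}$, with $k$ just recording the limiting direction in~$K$ along which the decomposition stabilizes. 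The claim then reduces to showing that this limit exists, lies in $\weyl_P$, and is constant as $(g_i)_{i=1}^\infty$ varies.

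The core step is Kingman's sub-additive ergodic theorem applied to the Bernoulli shift $T$ on $\bigl(G^{\NN}, \mu_G^{\NN}\bigr)$, which is ergodic as a product measure. For each linear functional $\chi$ on the logarithmic image of~$A$ that is nonnegative on $\log \weyl_P$ (for instance, a fundamental weight), the sequence $\phi_n\bigl((g_i)\bigr) = \chi\bigl(\log \kappa(g_n \cdots g_1)\bigr)$ satisfies $\phi_{n+m} \le \phi_n + \phi_m \circ T^n$, because $\kappa(gh)$ is dominated by $\kappa(g)+\kappa(h)$ in the partial order generated by the dominant weights. Kingman's theorem then forces $\phi_n/n$ to converge a.e.\ to a $T$-invariant limit, which must be constant by ergodicity. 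Running this simultaneously over a spanning family of such~$\chi$ assembles a deterministic limit vector $\log \widehat{a_P}$. Well-definedness of $\widehat{a_P}$ for a given sample path is then a soft consequence: any two elements of $\weyl_P$ producing orbits that sublinearly track the same trajectory must produce rays within sublinear distance of one another, hence must coincide.

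The main obstacle I anticipate is the vector-valued nature of the Cartan projection. Kingman's theorem is natively a scalar result, whereas what is actually needed here is simultaneous coordinate-wise convergence in $\Lie{A}$, not merely norm convergence. Assembling scalar limits along enough functionals to reconstruct the full vector---and verifying both that the resulting limit lies in $\weyl_P$ and that it is non-trivial (i.e., $\widehat{a_P} \ne \1$, which uses that the compact support of $\mu_G$ generates $G$)---is the substantive content of the cited Benoist-Quint result, and is the step I expect to require the most care.
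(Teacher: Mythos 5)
The paper does not actually prove this statement: it is quoted verbatim from Benoist--Quint, so there is no internal argument to compare against. Judged on its own terms, your proposal is a correct and standard proof. The subadditivity of $\chi\circ\kappa$ for dominant $\chi$ (via operator norms in the representation of highest weight~$\chi$), Kingman along the fundamental weights (integrability being trivial since $\mu_G$ has compact support), ergodicity of the Bernoulli shift, and closedness and convexity of $\weyl_P$ do assemble into a deterministic limit of $\kappa(g_n\cdots g_1)/n$, and your uniqueness argument for a fixed path is sound. Two small points deserve to be made explicit. First, translating the sublinear word-length tracking of \cref{trackingA} into the statement $\kappa(g_n\cdots g_1)/n\to\log\widehat{a_P}$ uses that the Cartan projection is coarsely Lipschitz for the word metric $\ell_\open$ (equivalently, $1$-Lipschitz for the symmetric-space metric, which is quasi-isometric to $\ell_\open$); this is standard but is the hinge of the reduction. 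Second, the nontriviality $\widehat{a_P}\neq\1$ asserted in \cref{trackingA} needs Furstenberg-type positivity of the drift (via Zariski density of the group generated by $\operatorname{supp}\mu_G$), not merely that the support generates $G$ as you suggest; you rightly flag this as the delicate part. Note also that Benoist--Quint themselves argue differently, deriving the law of large numbers for $\kappa$ from the additive Iwasawa cocycle and stationary measures on the flag variety; your Kingman route is more elementary for well-definedness and determinism, while theirs yields the finer regularity of the Lyapunov vector as a byproduct.
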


Combining these results has the following consequence:

\begin{prop}[{\cite[Prop.~7.6]{DeroinHurtado}}] \label{ExistsContractingOverview}
For $\mu_G^\infty$-a.e.\ sequence $(g_i)_{i=1}^\infty$ of elements of~$G$, $\mu_X^P$-a.e.\ $x \in X$ and every $y \in x + \RR$, we have
	\[ \widehat{a_P}^n \, x \  - \ \widehat{a_P}^n \, y \  \tends \ 0 . \]
\end{prop}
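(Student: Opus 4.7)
The plan is to combine the two preceding inputs: the exponential random-walk contraction from \cref{RandomContractOverview} and the Karlsson-Margulis tracking from \cref{trackingA}. The strategy is to factor the random product $g_n \cdots g_1$ as a sublinear error times $\widehat{a_P}^n$ times an $n$-independent (but random) element of~$K$, and then cancel the sublinear error using the Lipschitz control on $\RR$-orbits from \cref{LipOnLeaves}.

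First I would fix a $\mu_G^\infty$-a.e.\ sequence $(g_i)_{i=1}^\infty$ satisfying the conclusions of both \cref{RandomContractOverview} and \cref{trackingA}. By \cref{trackingA}, there exist $k \in K$ (uniformly distributed as $(g_i)$ varies) and a deterministic $\widehat{a_P} \in \weyl_P \smallsetminus \{\1\}$ such that, setting
	\[ \epsilon_n \coloneqq g_n g_{n-1} \cdots g_1 \, k \, \widehat{a_P}^{-n} , \]
one has $\ell_O(\epsilon_n) = o(n)$ and $g_n \cdots g_1 = \epsilon_n \, \widehat{a_P}^n \, k^{-1}$. By \cref{RandomContractOverview}, there exist $\delta < 1$ and a $\mu_X$-null set $N \subset X$ such that, for every $x \notin N$ and every $y \in x + \RR$,
	\[ |\epsilon_n \, \widehat{a_P}^n \, k^{-1} x \ - \ \epsilon_n \, \widehat{a_P}^n \, k^{-1} y| \ < \ \delta^n \]
for all sufficiently large~$n$. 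Since $\open$ has compact closure, the leafwise Lipschitz constants from \cref{LipOnLeaves} are uniformly bounded on $\open \cup \open^{-1}$ by some constant~$C$; writing $\epsilon_n^{-1}$ as a word of length $\ell_O(\epsilon_n)$ in these generators shows that $\epsilon_n^{-1}$ is $C^{\ell_O(\epsilon_n)}$-Lipschitz on each $\RR$-orbit. Applying $\epsilon_n^{-1}$ to the two displayed points yields
	\[ |\widehat{a_P}^n (k^{-1} x) \ - \ \widehat{a_P}^n (k^{-1} y)| \ \le \ C^{\ell_O(\epsilon_n)} \, \delta^n \ = \ \exp\bigl(o(n) + n \log \delta\bigr) \tends 0 . \]

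The remaining step is a measure-class upgrade. Setting $x' \coloneqq k^{-1} x$ and $y' \coloneqq k^{-1} y$ (and using that $k^{-1}$ preserves the $\RR$-orbit decomposition), the conclusion becomes $\widehat{a_P}^n x' - \widehat{a_P}^n y' \tends 0$ for every $x' \notin k^{-1} N$ and every $y' \in x' + \RR$. The Furstenberg disintegration $\mu_X = \int_K k_* \mu_X^P \, dm_K(k)$ \csee{FurstenburgMuP} turns $\mu_X(N) = 0$ into $\mu_X^P(k^{-1} N) = 0$ for $m_K$-a.e.\ $k \in K$. Since the tracking element~$k$ from \cref{trackingA} is uniformly distributed in~$K$, it lands in this prescribed full-$m_K$-measure subset for $\mu_G^\infty$-a.e.\ sequence, so $k^{-1} N$ is $\mu_X^P$-null, and the conclusion indeed holds for $\mu_X^P$-a.e.\ $x'$.

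The main obstacle I expect is precisely this last upgrade from ``$\mu_X$-a.e.'' to ``$\mu_X^P$-a.e.'': \cref{RandomContractOverview} is naturally phrased for the stationary measure~$\mu_X$, whereas the Key Proposition requires the conclusion for the $P$-invariant boundary measure~$\mu_X^P$, and a $\mu_X$-null set is not automatically $\mu_X^P$-null. What rescues the argument is the coupling between the $K$-factor in the Furstenberg disintegration of~$\mu_X$ and the uniformly distributed tracking element~$k$ from \cref{trackingA}: translating the null set by $k^{-1}$ exactly matches the disintegration and delivers a $\mu_X^P$-null set.
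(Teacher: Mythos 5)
The first half of your argument --- factoring $g_n g_{n-1}\cdots g_1 = \epsilon_n\, \widehat{a_P}^n\, k^{-1}$ with $\ell_O(\epsilon_n) = o(n)$ and cancelling the sublinear error against the exponential contraction of \cref{RandomContractOverview}, using the uniform leafwise Lipschitz control of \cref{LipOnLeaves} --- is exactly the paper's argument, and it does prove that the set $X_0$ of points whose entire $\RR$-orbit is contracted by the powers $\widehat{a_P}^n$ has full $\mu_X$-measure. Note that $X_0$ does not depend on the sequence, since $\widehat{a_P}$ is deterministic, so for this step you do not even need the equidistribution of~$k$: the $K$-invariance of $\mu_X$ already gives $\mu_X\bigl(k(\omega)^{-1}(X\smallsetminus N(\omega))\bigr)=1$ for any single good sequence~$\omega$.

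The gap is in your upgrade from $\mu_X$ to $\mu_X^P$. From $\mu_X(N)=0$ and $\mu_X=\int_K k'_*\mu_X^P\,dm_K(k')$ you correctly deduce $\mu_X^P(k'^{-1}N)=0$ for $m_K$-a.e.\ $k'$, but the exceptional null set of bad $k'$ depends on $N$, hence on the sequence $\omega=(g_i)$, while the tracking element $k=k(\omega)$ is itself a function of~$\omega$. Uniform distribution of $k(\omega)$ only says that its law is $m_K$, i.e., that it avoids any \emph{fixed} $m_K$-null subset of~$K$ almost surely; it gives no control over whether $k(\omega)$ avoids an $\omega$-dependent null set $B(\omega)$. (Compare $B(\omega)=\{k(\omega)\}$: this is $m_K$-null for every~$\omega$, yet always hit.) So the assertion that $k^{-1}N$ is $\mu_X^P$-null does not follow from what you cite, and since $\mu_X^P$ is in general singular with respect to~$\mu_X$, no cheaper absolute-continuity argument rescues it. The paper handles this step by a different mechanism (see \cref{ExistsContractingDetails}): having shown $\mu_X(X_0)=1$, it uses $K$-invariance of $\mu_X$, the disintegration of \cref{FurstenburgMuP}, and Fubini to get that for $\mu_X^P$-a.e.\ $x$ one has $k'x\in X_0$ for $m_K$-a.e.\ $k'\in K$; since $K$ is open, small $k'$ factor as $g u$ with $u\in U_{\widehat{a_P}}^+$ and $g$ in the non-expanding part, and because $\widehat{a_P}^n g\, \widehat{a_P}^{-n}$ stays bounded, contraction at $gux$ transfers to contraction at $ux$; finally the $U_{\widehat{a_P}}^+$-invariance of $\mu_X^P$ (which holds because $U_{\widehat{a_P}}^+\subseteq P$) converts ``$ux\in X_0$ for a.e.\ small $u$'' into $\mu_X^P(X_0)=1$. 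Your last paragraph should be replaced by an argument of this kind.
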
,

\begin{proof}[Idea of proof]
We will ignore some technical issues, including the difference between~$\mu_X$ and~$\mu_X^P$.
 (\Cref{ExistsContractingDetails} explains how to get from $\mu_X$ to~$\mu_X^P$, by using the fact that $U^+_a$ is contained in~$P$, and therefore preserves~$\mu_X^P$.)
 
Since $\mu_X$ is $K$-invariant, we see that, for $m_K$-a.e.\ $k \in K$, the conclusion of \cref{RandomContractOverview} holds with $kx$ and~$ky$ in the place of $x$ and~$y$. Then the uniform distribution in the conclusion of \cref{trackingA} implies that we can choose $k$ and~$(g_i)_{i=1}^\infty$ so that the conclusions of \cref{RandomContractOverview} and \cref{trackingA} hold simultaneously. That is, 
	\[ \text{$| g_n g_{n-1} \cdots g_1 k x \ - \  g_n g_{n-1} \cdots g_1 k y| \tends 0$ exponentially fast,}\]
 and the difference between $g_n g_{n-1} \cdots g_1 \, k$ and $\widehat{a_P}^n$ is the product of a sublinear number of elements of~$\open$. Since elements of~$\open$ are uniformly Lipschitz, we conclude that $| \widehat{a_P}^n x \ - \  \widehat{a_P}^n y|$ also tends to~$0$ (exponentially fast). 
 \end{proof}

We have established that the single element~$\widehat{a_P}$ has global contraction. We can use this to prove that the elements in a halfspace of~$A$ also contract entire $\RR$-orbits.
In other words, we now prove \cref{keyprop}.

\begin{proof}[\bf Proof of \cref{keyprop}]
Fix $a \in A$. For each $x \in X$, 
let
	\[ f(x) = \sup \{\, t \in \RR^{\ge 0} \mid  a^n (x+t) - a^n x \ \tends \ 0 \,\} . \]
For any $n \in \ZZ^+$, the element $\widehat{a_p}^n$ commutes with~$a$ (and is uniformly Lipschitz on the $\RR$-orbits by \cref{LipOnLeaves}), so we have
	\begin{align*}
	 \widehat{a_p}^n \bigl( x + f(x) \bigr) = \widehat{a_p}^n x + f(\widehat{a_p}^n \, x)  
	  .\end{align*}
If $f(x) < \infty$, this implies
	\begin{align*}
	f(\widehat{a_p}^n \, x) \quad = \quad  \widehat{a_p}^n \bigl( x + f(x) \bigr) \ - \ \widehat{a_p}^n x \quad \tends \quad 0 
	&& \text{(by \cref{ExistsContractingOverview})}
	. 
	\end{align*}
We can now conclude from  the Pointwise Ergodic Theorem~\pref{PointwiseErgThm} that $\int_X f \, d\mu_X^P = 0$, so $f(x) = 0$ $\mu_X^P$-a.e. However, if $\chi_P(a) < 0$, then we see from \cref{chiPLocalOverview} that $f(x) > 0$ for $\mu_X^P$-a.e.~$x$. This is a contradiction, and completes the proof of \cref{keyprop}. 
\end{proof}

\section{Some additional explanations} \label{DeroinHurtadoPfSect}

For the interested reader, we provide some arguments that were omitted from \cref{OverviewSect}.

\subsection{Construction of the almost-periodic space~\texorpdfstring{$Z$}{Z}}

\begin{thm}[proof of \cref{AlmPerSpace}, {\cite[Thm.~1.1]{Deroin-AlmPer}, \cite[Thm.~5.4]{DeroinHurtado}}] \label{AlmPerSpacePf}
There is a \textup(nonempty\textup) compact metrizable space $Z$, such that: 
	\begin{enumerate}
	\item \label{AlmPerSpacePf-act}
	$\Gamma$ acts on~$Z$ \textup(by homeomorphisms\textup) with no global fixed point, 
	\item \label{AlmPerSpacePf-R}
	$\RR$ has a continuous action on $Z$ that is free, 
	\item \label{AlmPerSpacePf-orbits}
	each\/ $\RR$-orbit is\/ $\Gamma$-invariant, 
	and 
	\item \label{AlmPerSpacePf-orpres}
	the action of\/~$\Gamma$ on each\/ $\RR$-orbit is orientation preserving.
	\end{enumerate}
	Furthermore, two additional nice properties \pref{GammaLipschitz} and \pref{MeanDisplacement} hold.
\end{thm}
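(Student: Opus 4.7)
The plan is to build $Z$ as the closure of a family of \emph{translates} of the given $\Gamma$-action in a suitable function space of representations. The essential prerequisite is to first replace the original action, up to $\Gamma$-equivariant semiconjugacy, by an action that is Lipschitz on each $\RR$-orbit and has vanishing mean displacement with respect to a chosen random walk on~$\Gamma$. Fix a finitely supported, symmetric, generating probability measure~$\mu_\Gamma$ on~$\Gamma$ (which exists because $\Gamma$ is finitely generated), and, using the theory of $\mu_\Gamma$-harmonic measures for actions on the line (in the style of Deroin--Kleptsyn--Navas--Parwani), produce a $\Gamma$-equivariant, non-decreasing, surjective map $h \colon \RR \to \RR$ that conjugates the original action to a Lipschitz action $\rho \colon \Gamma \to \Homeo^+(\RR)$ with
\[ \int_\Gamma \bigl(\rho(\gamma)(x) - x\bigr) \, d\mu_\Gamma(\gamma) = 0 \qquad \text{for every } x \in \RR . \]
This normalized action is the source of the two extra properties \pref{GammaLipschitz} and \pref{MeanDisplacement}.

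With $\rho$ in hand, define for each $t \in \RR$ the translated representation
\[ \rho_t \colon \Gamma \to \Homeo^+(\RR) , \qquad \rho_t(\gamma)(x) = \rho(\gamma)(x+t) - t . \]
Equip $\mathrm{Hom}\bigl( \Gamma, \Homeo^+(\RR) \bigr)$ with the topology of uniform convergence on compact subsets of~$\RR$, evaluated on the finite support of~$\mu_\Gamma$. Set $Z$ to be the closure of $\{\,\rho_t \mid t \in \RR\,\}$. The uniform Lipschitz bounds from \pref{GammaLipschitz} give equicontinuity on compact sets, and the vanishing mean displacement from \pref{MeanDisplacement} controls the translation component of each $\rho_t(\gamma)$ uniformly in~$t$; together, these furnish the hypotheses of Arzel\`a--Ascoli, so $Z$ is compact and metrizable. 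The $\RR$-action is defined on the dense subset by $s \cdot \rho_t = \rho_{t+s}$ and extended by continuity; it is free because a nontrivial stabilizer of some limit $\tilde\rho$ would yield a $\Gamma$-invariant periodic structure on~$\RR$, contradicting absence of global fixed points for the original action.

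Define the $\Gamma$-action on~$Z$ by
\[ \gamma \cdot \tilde\rho = \tilde\rho + \tilde\rho(\gamma)(0) , \]
using the $\RR$-action already constructed. A direct check shows $\gamma \cdot \rho_t = \rho_{\rho(\gamma)(t)}$, so each $\RR$-orbit is $\Gamma$-invariant and the induced $\Gamma$-action along an orbit is just the Lipschitz, orientation-preserving homeomorphism $\tilde\rho(\gamma)$. No global $\Gamma$-fixed point can exist on~$Z$: any such $\tilde\rho$ would satisfy $\tilde\rho(\gamma)(0) = 0$ for every $\gamma \in \Gamma$, and combining this with the density of~$\{\rho_t\}$ and the faithfulness of the original action forces a contradiction.

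The main obstacle is the first step: for a general faithful orientation-preserving action of a finitely generated group on~$\RR$, the generating homeomorphisms need not be Lipschitz (or even H\"older), and the naive translate family $\{\rho_t\}_{t \in \RR}$ can be very far from equicontinuous, so no compactness is available from Arzel\`a--Ascoli. The harmonic normalization supplied by $\mu_\Gamma$-harmonic measures is the canonical way to replace the dynamics by a Lipschitz one with bounded drift, and is the technical heart of Deroin's \cite{Deroin-AlmPer}. Once this normalization is granted, the construction of~$Z$ and the verification of all four listed properties are routine extensions of the Lipschitz bounds together with the zero-drift condition.
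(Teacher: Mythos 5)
Your construction is essentially Deroin's hull-of-translates version of the paper's argument (the paper instead takes $Z$ to be the set of \emph{all} actions satisfying the Lipschitz and bounded-displacement estimates, cut down by an explicit nontriviality condition), and the $\Gamma$-action formula $\gamma\cdot\tilde\rho=\tilde\rho+\tilde\rho(\gamma)(0)$ and the Arzel\`a--Ascoli compactness step match the paper. However, there is a genuine gap at exactly the two points you dispatch in one line each: freeness of the $\RR$-action and absence of a global $\Gamma$-fixed point. The closure of the translate family $\{\rho_t\}$ can a priori contain degenerate limits --- for instance a representation with a global fixed point, or even the trivial representation, if the displacements $\rho(\gamma)(x)-x$ of the generators become small along some sequence $t_n\to\pm\infty$. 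Such a limit would be fixed by every $t\in\RR$ (killing freeness) and by every $\gamma\in\Gamma$ (killing the no-fixed-point property), and neither phenomenon is ``contradicting absence of global fixed points for the original action'' nor ruled out by faithfulness of $\rho$: those hypotheses concern $\rho$ itself, not limit points of its translates. What is needed is a displacement lower bound that is uniform in $t$ and survives passage to limits; this is precisely the nontriviality condition \pref{ZNontriv} that the paper builds into the definition of~$Z$ (a uniform lower bound on $\int_\Gamma(\varphi_\gamma(t)-t)^2\,d\mu_\Gamma$), whose availability for the normalized action is part of the content of the Deroin--Kleptsyn--Navas--Parwani harmonic construction, not an automatic consequence of ``Lipschitz plus zero drift.''

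Even granting such a lower bound, your justification of freeness is still not the right argument. The serious case is a representation $\varphi$ in $Z$ that is conjugated to itself by the translation by some $t\neq 0$; then $\varphi$ commutes with that translation, hence descends to an action of $\Gamma$ on the circle $\RR/t\ZZ$. Ruling this out uses properties of the lattice, not of the original line action: the paper invokes \fullcref{SSLattFacts}{CircleAction} to get a finite orbit (so, after passing to a finite-index subgroup, an invariant coset $s+t\ZZ$), then \fullcref{SSLattFacts}{abel} (finite abelianization) to force the action on that discrete set to be trivial, producing a fixed point of $\varphi$ and contradicting the nontriviality condition. Note also that this same circle argument is needed just to make your $\RR$-action well defined as stated, since you define it on the family $\{\rho_t\}$ via the parameter $t$ and the map $t\mapsto\rho_t$ has not been shown to be injective; the cleaner route (taken by the paper) is to define the conjugation-by-translation action on the whole representation space and restrict it to~$Z$.
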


\begin{proof}[Sketch of proof]
Fix a finite, symmetric subset~$S$ that generates~$\Gamma$ and contains~$\1$. 
A general action of~$\Gamma$ on~$\RR$ may be very badly behaved, but it has been shown \cite[Props.~8.1 and~8.4]{DeroinEtAl-Symmetric} that if an action exists, then there is an action $\varphi \colon \Gamma \to \Homeo^+(\RR)$, and a constant~$C$, such that for all $s,t \in \RR$, and all $\gamma \in S$:
	\begin{itemize}
	\item (Lipschitz) \quad
		$ |\varphi_\gamma(s) - \varphi_\gamma(t) | \le C |s - t|$,
		and
	\item (bounded displacement) \quad $ |\varphi_\gamma(t) - t | \le C$.
	\end{itemize}
Thus, we can fix a constant~$C$ such that the following set is nonempty:
	\[ Z_0 \coloneqq  \bigset{ 
		\begin{matrix} 
		\varphi \colon \Gamma \to \Homeo^+(\RR) \\
		\text{(homomorphism)}
		\end{matrix}
		}{
		\begin{matrix}
		\text{for all $\gamma \in S$ and all $s,t \in \RR$,} \\
		\text{we have \ }
		|\varphi_\gamma(s) - \varphi_\gamma(t) | \le C |s - t| \\
		\text{and \ }  |\varphi_\gamma(t) - t | \le C
		\end{matrix}
		}. \]
The trivial action is one of the elements of~$Z_0$. To eliminate this, we add a nontriviality condition:
	\begin{align} \label{ZNontriv}
	Z = \bigset{ \varphi \in Z_0 }{ \forall t \in \RR, \ \int_\Gamma \bigl( \varphi_\gamma(t) - t \bigr)^2 \, d \mu_\Gamma(\gamma) > \frac{1}{C} }
	. \end{align}
If $1/C$ is small enough, then $Z$ will be nonempty.

For each $\gamma \in \Gamma$, the bounded displacement condition implies that $\{\, \varphi_\gamma \mid \varphi \in Z \,\}$ is uniformly bounded on any compact interval, and the Lipschitz condition implies that this set of functions is equicontinuous. Therefore, we see from the Arzelà–Ascoli Theorem that $Z$ is compact.

This completes the construction of the space~$Z$. We now verify that it has the desired properties~(\ref{AlmPerSpacePf-act}--\ref{AlmPerSpacePf-orpres}). 

\medbreak

\pref{AlmPerSpacePf-R} The group~$\RR$ acts on~$Z$ by via conjugation by translations: for $t \in \RR$, let
	\[ (t * \varphi)_\gamma (s) = \varphi_\gamma(s + t) - t . \]

Suppose this $\RR$-action is not free, so there is some nonzero $t \in \RR$ and some $\varphi \in Z$, such that $t * \varphi = \varphi$. This means that for all $s \in \RR$ and all $\gamma \in \Gamma$, we have
	\[ \varphi_\gamma (s + t) - t = \varphi_\gamma(s) , \]
so the action of~$\Gamma$ centralizes the translation by~$t$. Hence, the action of~$\Gamma$ on~$\RR$ factors through to a well-defined action on the quotient $\RR / t\ZZ$. Then \fullcref{SSLattFacts}{CircleAction} tells us that this action must have a fixed point $s + t\ZZ$ (after replacing $\Gamma$ with a finite-index subgroup). So the subset $s + t\ZZ$ of~$\RR$ is $\Gamma$-invariant. The group~$\Gamma$ must act on this set by order-preserving permutations, which means that it acts by translations. But the group of translations is abelian, and the abelianization of~$\Gamma$ is finite \fullcsee{SSLattFacts}{abel}. So we conclude that $\Gamma$ fixes~$s$ (and also fixes all of the other points in $s + t \ZZ$. More precisely, $\varphi_\gamma(s) =s$ for every $\gamma \in \Gamma$. This contradicts the nontriviality condition in~\pref{ZNontriv}.

\medbreak

\pref{AlmPerSpacePf-act} We define an action of~$\Gamma$ on~$Z$ by letting an element~$\alpha$ of~$\Gamma$ act on the point~$\varphi$ of~$Z$ via the action of the element $\varphi_\alpha(0)$ of~$\RR$, so ${}^\alpha \! \varphi = \varphi_\alpha(0) * \gamma$. More concretely:
	\[ {}^\alpha \! \varphi_\gamma(t) = \bigl( \varphi_\alpha(0) * \varphi)_\gamma(t) =  \varphi_\gamma \bigl( t + \varphi_\alpha(0) \bigr) - \varphi_\alpha(0) . \]

It may not be obvious that this defines an action, so we provide a verification. Noting that, for $\alpha \in \Gamma$ and $t \in \RR$, we have
	\begin{align} \label{gamma(z+t)}
	 {}^\alpha (t * \varphi)
	= (t * \varphi)_\alpha(0) * (t * \varphi)
	= [ \varphi_\alpha(0 + t) - t] * (t * \varphi)
	= \varphi_\alpha(t) * \varphi
	, \end{align}
we see that
	\[{}^\alpha \bigl( {}^\beta \! \varphi \bigr)
	= {}^\alpha( \varphi_\beta(0) * \varphi)
	= \varphi_\alpha \bigl( \varphi_\beta(0) \bigr) * \varphi
	= \varphi_{\alpha \beta}(0) * \varphi
	= {}^{\alpha\beta} \varphi
	. \]

Suppose this $\Gamma$-action has a fixed point: ${}^\alpha \varphi = \varphi$ for every $\alpha \in \Gamma$. This means $\varphi_\alpha(0) * \varphi = \varphi$, so $\varphi_\alpha(0) = 0$ (because the $\RR$-action is free). Since this holds for every $\alpha \in \Gamma$, we conclude that $0$ is a fixed point of the action given by~$\varphi$. This contradicts the nontriviality condition in~\pref{ZNontriv}.

\medbreak

\pref{AlmPerSpacePf-orbits} Since elements of~$\Gamma$ act on each point via an element of~$\RR$, it is clear that the each $\RR$-orbit is $\Gamma$-invariant. 

\medbreak

\pref{AlmPerSpacePf-orpres}
If $s < t$, then $\varphi_\alpha(s) < \varphi_\alpha(t)$ (because $\varphi_\alpha$ is orientation preserving). So we see from~\pref{gamma(z+t)} that the action of~$\Gamma$ on each $\RR$-orbit is orientation preserving.
\end{proof} 

\begin{lem} \label{GammaLipschitz}
The action of\/ $\Gamma$ on each\/ $\RR$-orbit is Lipschitz. More precisely, for all $\alpha \in S$, all $t \in \RR$, and all $\varphi \in Z$, there is some $s \in \RR$, such that
	\[ {}^\alpha (t * \varphi) = s * {}^\alpha \varphi  
	\qquad \text{and\/ $|s| \le C |t|$} . \]
\end{lem}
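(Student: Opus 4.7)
The plan is to read the lemma as essentially the Lipschitz condition built into the definition of $Z_0$, once transported through the commutation identity \pref{gamma(z+t)} that was established in the proof of \cref{AlmPerSpacePf}. Since $\RR$ acts freely on $Z$ \fullcsee{AlmPerSpacePf}{R}, any identity of the form $r_1 * \varphi = r_2 * \varphi$ forces $r_1 = r_2$, so it will suffice to rewrite both sides of the asserted equality as translates of a common basepoint and read off the unique value of~$s$.

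First I would expand the right-hand side using associativity of the $\RR$-action and the definition of~${}^\alpha \varphi$:
\[ s * {}^\alpha \varphi \ = \ s * \bigl( \varphi_\alpha(0) * \varphi \bigr) \ = \ \bigl( s + \varphi_\alpha(0) \bigr) * \varphi . \]
For the left-hand side I would invoke the computation carried out in~\pref{gamma(z+t)}, which gives
\[ {}^\alpha (t * \varphi) \ = \ \varphi_\alpha(t) * \varphi . \]
Equating the two expressions and using that the $\RR$-action on~$Z$ is free, the unique choice is
\[ s \ = \ \varphi_\alpha(t) - \varphi_\alpha(0) . \]

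Second, I would bound $|s|$ by directly applying the Lipschitz clause in the definition of~$Z_0$: since $\alpha \in S$ and $\varphi \in Z \subseteq Z_0$, we have $|\varphi_\alpha(t) - \varphi_\alpha(0)| \le C\,|t - 0| = C|t|$, which is exactly the desired bound. There is no serious obstacle here; the only thing to be careful about is the direction of the computation, namely expressing ${}^\alpha(t * \varphi)$ as a translate of ${}^\alpha \varphi$ (rather than of $\varphi$), after which the Lipschitz estimate built into the construction of~$Z$ delivers the constant. In effect, the lemma records the fact that conjugating an $\RR$-translation of size~$t$ by the action of $\alpha \in S$ yields an $\RR$-translation whose size is controlled by the Lipschitz constant of~$\varphi_\alpha$.
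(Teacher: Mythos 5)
Your proof is correct, and it follows the same basic route as the paper: both rest on the identity \pref{gamma(z+t)}, which rewrites ${}^\alpha(t*\varphi)$ as $\varphi_\alpha(t)*\varphi$. The difference is in the bookkeeping and in which clause of the definition of~$Z_0$ supplies the bound. The paper simply takes $s = \varphi_\alpha(t)$ and estimates it via the bounded-displacement condition, arriving at $|s| \le (C+1)|t|$; you instead keep track of the base point, noting that $s * {}^\alpha\varphi = \bigl( s + \varphi_\alpha(0) \bigr) * \varphi$, so that freeness of the $\RR$-action forces $s = \varphi_\alpha(t) - \varphi_\alpha(0)$, and then the Lipschitz clause gives exactly $|s| \le C|t|$. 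Your version is actually the more careful one: it accounts for the offset $\varphi_\alpha(0)$ hidden in ${}^\alpha\varphi$, and it yields a genuinely linear bound in $|t|$ for all $t$ (the paper's use of bounded displacement only controls $|\varphi_\alpha(t) - t|$ by a constant, which does not give a bound of the form $C'|t|$ when $|t|$ is small). One cosmetic remark: freeness is only needed if you want to assert uniqueness of~$s$; for existence it suffices to check directly that $s = \varphi_\alpha(t) - \varphi_\alpha(0)$ works, which is immediate from \pref{gamma(z+t)}, as you in effect do.
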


\begin{proof}
By~\pref{gamma(z+t)}, we may take $s = \varphi_\alpha(t)$, and then we see from the bounded displacement property in the definition of~$Z_0$ that 
	\[ |s| = |\varphi_\alpha(t)| \le |\varphi_\alpha(t) - t| + |t| \le C|t| + |t| = (C + 1) |t| . \qedhere \]
\end{proof}

\subsection{The measures \texorpdfstring{$\mu_Z$ and~$\mu_X$}{on Z and X} are stationary}

It is an immediate consequence of work of B.\,Deroin, V.\,Kleptsyn, A.\,Navas, and K.\,Parwani \cite[Prop.~8.1]{DeroinEtAl-Symmetric} that left-orderable groups have an action on~$\RR$ in which the mean displacement of every point is~$0$. Therefore, the almost-periodic space~$Z$ can be constructed to have this property on every $\RR$-orbit:
	 \begin{align} \label{MeanDisplacement}
	  \forall z \in Z, \quad \int_\Gamma (\gamma z - z) \, d  \mu_\Gamma (\gamma) = 0
	  . \end{align}

\begin{rem}
It is shown in \cite[Prop.~8.4]{DeroinEtAl-Symmetric} that any action that satisfies~\pref{MeanDisplacement} is Lipschitz and has bounded displacement. Indeed, the existence of an action with the latter two properties (which was fundamental in the construction of the almost-periodic space~$Z$) is obtained as a consequence of the existence of an action satisfying~\pref{MeanDisplacement}.
\end{rem}

From~\pref{MeanDisplacement}, we obtain the following fact. 

\begin{prop}[proof of \cref{nuGammaStationaryStated}, {\cite[Lem.~5.6]{DeroinHurtado}}] \label{nuGammaStationary}
The measure $\mu_Z$ is $\mu_\Gamma$-stationary. 
\end{prop}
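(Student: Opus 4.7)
The plan is to prove the stronger pointwise statement $d(\mu_\Gamma * \mu_Z)/d\mu_Z \equiv 1$ $\mu_Z$-a.e., by combining \cref{RadonNikodym} with a differentiation of the mean displacement identity~\pref{MeanDisplacement} along each $\RR$-orbit.

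First, since $\mu_\Gamma$ is finitely supported and each $\gamma_*\mu_Z$ is absolutely continuous with respect to $\mu_Z$ with derivative $\dleaf_{\gamma^{-1}}$ by \cref{RadonNikodym}, one has
\[
\frac{d(\mu_\Gamma * \mu_Z)}{d\mu_Z}(z) \;=\; \int_\Gamma \dleaf_{\gamma^{-1}}(z)\, d\mu_\Gamma(\gamma) \;=\; \int_\Gamma \dleaf_{\gamma}(z)\, d\mu_\Gamma(\gamma),
\]
where the second equality uses the symmetry of $\mu_\Gamma$ (itself a consequence of the symmetry and bi-$K$-invariance of $\mu_G$: both $\mu_\Gamma(\{\gamma\}) = \mu_G(K\gamma)$ and $\mu_\Gamma(\{\gamma^{-1}\}) = \mu_G((K\gamma^{-1})^{-1}) = \mu_G(\gamma K)$ are equal to the constant value of $d\mu_G/dm_G$ on the double coset $K\gamma K$ times $m_G(K)$). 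So the task reduces to showing $F(z) := \int_\Gamma \dleaf_\gamma(z)\, d\mu_\Gamma(\gamma) = 1$ for $\mu_Z$-a.e.\ $z \in Z$.

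Second, I would establish this identity orbit-by-orbit. Fix $z_0 \in Z$ and parametrize its $\RR$-orbit by $s \mapsto z_0 + s$. Since each orbit is $\Gamma$-invariant by \fullcref{AlmPerSpace}{orbits}, every $\gamma \in \Gamma$ induces a Lipschitz map $h_\gamma \colon \RR \to \RR$ via $\gamma(z_0 + s) = z_0 + h_\gamma(s)$, with uniform Lipschitz constant by \cref{GammaLipschitz}. Applying~\pref{MeanDisplacement} at the point $z_0 + s$ yields $\int_\Gamma h_\gamma(s)\, d\mu_\Gamma(\gamma) = s$ for every $s \in \RR$. Both sides are Lipschitz in $s$, so differentiating the finite sum on the left (valid Lebesgue-a.e., where each of the finitely many $h_\gamma$ is differentiable) and using $h_\gamma'(s) = \dleaf_\gamma(z_0 + s)$ gives $F(z_0 + s) = 1$ for Lebesgue-a.e.\ $s$. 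Equivalently, the set $N := \{z \in Z : F(z) \neq 1\}$ meets every $\RR$-orbit in a Lebesgue-null subset.

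Finally, I would promote this orbit-wise vanishing to a $\mu_Z$-a.e.\ statement using $\RR$-invariance and Fubini: for any $T > 0$,
\[
\mu_Z(N) \;=\; \frac{1}{T}\int_0^T \mu_Z(N - t)\, dt \;=\; \int_Z \frac{1}{T}\int_0^T \1_N(z + t)\, dt\, d\mu_Z(z),
\]
and the inner integral vanishes for every $z$ because $N \cap (z + \RR)$ is Lebesgue-null in its orbit. Thus $\mu_Z(N) = 0$, so $F \equiv 1$ $\mu_Z$-a.e., which is the desired stationarity. The main subtlety is precisely this last promotion: since the $\RR$-orbits have infinite Lebesgue measure while $\mu_Z$ is a probability measure, there is no literal ``transverse-measure $\times$ Lebesgue'' disintegration of $\mu_Z$, and the interplay of $\RR$-invariance with Fubini is what lets one conclude the $\mu_Z$-a.e.\ statement from the orbit-wise one.
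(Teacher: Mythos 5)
Your proposal is correct and follows essentially the same route as the paper: both reduce, via \cref{RadonNikodym} and the symmetry of~$\mu_\Gamma$, to showing $\int_\Gamma \dleaf_\gamma \, d\mu_\Gamma = 1$ for $\mu_Z$-a.e.\ point, and both obtain this by differentiating the zero-mean-displacement identity~\pref{MeanDisplacement}, using the finiteness of the support of~$\mu_\Gamma$ to interchange the derivative with the sum. The only difference is bookkeeping: the paper performs the limit exchange pointwise at $\mu_Z$-a.e.~$z$ (the a.e.\ existence of $\dleaf_\gamma$ already encoding the orbitwise Lebesgue statement), whereas you differentiate orbit-by-orbit and then pass from Lebesgue-a.e.\ on each orbit to $\mu_Z$-a.e.\ by $\RR$-invariance and Fubini --- a step the paper leaves implicit.
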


\begin{proof}
For a.e.~$z \in Z$, we have
	\begin{align*}
	\int_\Gamma \frac{d \gamma_*\mu_Z}{d \mu_Z}(z) \, d  \mu_\Gamma (\gamma) 
	&= \int_\Gamma \frac{d \gamma^{-1}_*\mu_Z}{d \mu_Z}(z) \, d  \mu_\Gamma (\gamma) 
	\\&= \int_\Gamma \dleaf_\gamma(z)  \, d  \mu_\Gamma (\gamma) 
	\\&= \int_\Gamma \lim_{t \to 0} \frac{\gamma(z+ t) - z}{t} \, d  \mu_\Gamma (\gamma) 
	\\&= 1 + \int_\Gamma \lim_{t \to 0} \frac{\gamma(z+ t) - (z + t)}{t} \, d  \mu_\Gamma (\gamma) 
	\\&= 1 + \lim_{t \to 0} \frac{\int_\Gamma \bigl( \gamma(z+ t) - (z + t)   \bigr)\, d  \mu_\Gamma (\gamma) }{t} 
		&& \begin{pmatrix} \text{the integral is actually a}\\ \text{finite sum \csee{nuGammaDefn}} \end{pmatrix}
	\\&= 1 + \lim_{t \to 0} \frac{0}{t} 
		&& \text{\pref{MeanDisplacement}}
	\\& = 1
	. \end{align*}
So
	\[ \int_\Gamma \gamma_* \mu_Z \, d  \mu_\Gamma (\gamma) 
	=  \int_\Gamma \frac{d \gamma_*\mu_Z}{d \mu_Z} \, \mu_Z \, d  \mu_\Gamma (\gamma) 
	=  \left( \int_\Gamma \frac{d \gamma_*\mu_Z}{d \mu_Z} \, d  \mu_\Gamma (\gamma) \right) \cdot \mu_Z
	= 1 \cdot \mu_Z
	= \mu_Z . \qedhere \]
\end{proof}

\begin{cor}[proof of \cref{muXStationaryStated}, {\cite[Prop.~5.11]{DeroinHurtado}}] \label{muXStationary}
$\mu_X$ is $\mu_G$-stationary.
\end{cor}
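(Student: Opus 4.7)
The strategy is to unfold the convolution $\mu_G * \mu_X$ via the product decompositions $\mu_G = m_K \times \mu_\Gamma$ and $\mu_X = m_K \times \mu_Z$ \csee{muZmuXDefn}, and then reduce everything to the already-established stationarity of $\mu_Z$ \pref{nuGammaStationary}. Writing $g = k\gamma \in G$ with $k \in K$, $\gamma \in \Gamma$, and $x = [(k',z)] \in X$ with $k' \in K$, $z \in Z$, the action takes the form
\[ gx = [(k\gamma k',z)] = [(k\cdot k''(k',\gamma),\,\gamma'(k',\gamma)\cdot z)], \]
where $\gamma k' = k''(k',\gamma)\cdot\gamma'(k',\gamma)$ is the unique decomposition with $k''(k',\gamma) \in K$ and $\gamma'(k',\gamma) \in \Gamma$ \csee{G=KxGamma}.

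Accordingly, for any continuous $f$ on $X$, the convolution $(\mu_G * \mu_X)(f)$ becomes an iterated integral over $(k,\gamma,k',z) \in K \times \Gamma \times K \times Z$. The substitution $\tilde k \coloneqq k \cdot k''(k',\gamma)$, valid by left-invariance of $m_K$, absorbs the ``cocycle'' $k''$; after applying Fubini, the problem reduces to verifying that, for each fixed $k' \in K$,
\[ \int_\Gamma F\bigl(\gamma'(k',\gamma)\cdot z\bigr)\,d\mu_\Gamma(\gamma) = \int_\Gamma F(\tilde\gamma\cdot z)\,d\mu_\Gamma(\tilde\gamma), \quad \text{where} \quad F(z) \coloneqq \int_K f([(\tilde k, z)])\,dm_K(\tilde k). \]

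The main obstacle, and the heart of the proof, is to show that for each fixed $k'$ the map $\gamma \mapsto \gamma'(k',\gamma)$ is a $\mu_\Gamma$-preserving bijection of $\Gamma$. Bijectivity follows because, given $\gamma'$, the element $\gamma$ must be the unique member of $\Gamma \cap K\gamma'(k')^{-1}$ (using $G = K\Gamma$ and $K \cap \Gamma = \{\1\}$). For measure preservation, writing $\gamma' = (k'')^{-1}\gamma k'$ and applying the definition $\mu_\Gamma(\{\gamma\}) = \mu_G(K\gamma)$,
\[ \mu_\Gamma(\{\gamma'\}) = \mu_G\bigl(K(k'')^{-1}\gamma k'\bigr) = \mu_G(K\gamma k') = \mu_G(K\gamma) = \mu_\Gamma(\{\gamma\}), \]
where the middle two equalities are precisely the left and right $K$-invariance of $\mu_G$ \fullcsee{nuGDefn}{G-Kinvt}. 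This is exactly where the bi-$K$-invariance built into $\mu_G$ is used.

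Once this change of variables is justified, the right-hand side of the displayed equation is independent of $k'$, so the $k'$-integration evaporates (since $m_K$ is a probability measure). The remaining expression $\int_Z \int_\Gamma F(\tilde\gamma z)\,d\mu_\Gamma(\tilde\gamma)\,d\mu_Z(z)$ equals $\int_Z F\,d\mu_Z = \int_X f\,d\mu_X$ by \cref{nuGammaStationary} applied to the function $F$. We conclude $(\mu_G * \mu_X)(f) = \mu_X(f)$ for all continuous $f$, so $\mu_X$ is $\mu_G$-stationary, as required.
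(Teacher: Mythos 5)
Your argument is correct, but it takes a genuinely different route from the paper. The paper's proof of \cref{muXStationary} works infinitesimally: using the symmetry of~$\mu_G$ and \cref{RadonNikodym}, it computes $\int_G \dleaf_{g}\bigl([(k_0,z_0)]\bigr)\,d\mu_G(g)$ via the Chain Rule, right $K$-invariance of~$\mu_G$, the decomposition $\mu_G = m_K \times \mu_\Gamma$, and $\dleaf_k \equiv 1$ for $k \in K$, reducing to the identity $\int_\Gamma \dleaf_\gamma(z_0)\,d\mu_\Gamma(\gamma) = 1$ from the proof of \cref{nuGammaStationary}. You instead unfold the convolution $\mu_G * \mu_X$ directly, using the cocycle $\gamma k' = k''(k',\gamma)\,\gamma'(k',\gamma)$ coming from $G = K\Gamma$ with $K \cap \Gamma = \{\1\}$, and reduce to the stationarity statement $\mu_\Gamma * \mu_Z = \mu_Z$ used as a black box; the heart of your argument, that $\gamma \mapsto \gamma'(k',\gamma)$ is a $\mu_\Gamma$-preserving bijection for each fixed~$k'$, is verified correctly and isolates exactly where the bi-$K$-invariance of~$\mu_G$ enters. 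What your approach buys: it avoids the leafwise derivatives, the Chain Rule, \cref{RadonNikodym}, and the symmetry of~$\mu_G$ entirely, and it yields the measure identity $(\mu_G * \mu_X)(f) = \mu_X(f)$ directly rather than passing through an a.e.\ statement about Radon--Nikodym derivatives. What the paper's formulation buys: the $\dleaf$ computation is the one that survives when \cref{G=KGamma} is dropped (via the harmonicity of $D(\gamma,z)=\dleaf_\gamma(z)$ discussed in \cref{FiniteSetSect}), and it matches the formalism used later for~$\chi_P$, whereas your argument leans essentially on the product structures $G \simeq K \times \Gamma$ and $X \simeq K \times Z$. Two small points of precision: the substitution $\tilde k = k\,k''(k',\gamma)$ uses \emph{right}-invariance of $m_K$ (harmless, since $K$ is compact, hence unimodular), and the equality $\mu_G\bigl(K(k'')^{-1}\gamma k'\bigr) = \mu_G(K\gamma k')$ is already a set identity $K(k'')^{-1} = K$ rather than an instance of left $K$-invariance.
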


\begin{proof}
Let $[(k_0,z_0)] \in X$, with $k_0 \in K$ and $z_0 \in Z$. 
Then
	\begin{align*}
	\int_G  \frac{d g_*\mu_X }{ d \mu_X} & \bigl( [(k_0,z_0)] \bigr) \, d \mu_G(g)
	\\&= \int_G  \frac{d g^{-1}_*\mu_X }{ d \mu_X} \bigl( [(k_0,z_0)] \bigr) \, d \mu_G(g)
	\\&= \int_G \dleaf_{g} \bigl( [(k_0,z_0)] \bigr) \, d\mu_G(g)
	\\&= \int_G \dleaf_{gk_0} \bigl( [(\1,z_0)] \bigr)  \cdot  \dleaf_{k_0^{-1}}\bigl( [(k_0,z_0)] \bigr) \, d\mu_G(g)
		&& \text{(Chain Rule)}
	\\&= \int_G \dleaf_{gk_0} \bigl( [(\1,z_0)] \bigr) \cdot 1 \, d\mu_G(g)
	\\&= \int_G \dleaf_{g} \bigl( [(\1,z_0)] \bigr) \, d\mu_G(g)
		&& \text{($\mu_G$ is right $K$-invariant (\fullref{nuGDefn}{G-Kinvt})}
	\\&= \int_\Gamma  \int_K \dleaf_{k \gamma} \bigl( [(\1,z_0)] \bigr) \, d m_K(k) \, d\mu_\Gamma(\gamma)
		&& \text{(definition of $\mu_\Gamma$)}
	\\&= \int_\Gamma  \int_K \dleaf_{\gamma} \bigl( [(\1,z_0)] \bigr) \, d m_K(k) \, d\mu_\Gamma(\gamma)
		&& \text{(Chain Rule and $\dleaf_k = 1$)}
	\\&= \int_\Gamma \dleaf_{\gamma} \bigl( [(\1,z_0)] \bigr) \, d\mu_\Gamma(\gamma)
		&& \text{(integral of a constant)}
	\\&= \int_\Gamma \dleaf_{\gamma} (z_0) \, d\mu_\Gamma(\gamma)
		&& \text{($\gamma \bigl( [(\1,z)] \bigr) = [(\gamma ,z)] =  [(\1,\gamma z)]$)}
	\\&= 1
		&& \text{(\cref{nuGammaStationary})}
	. \qedhere \end{align*}
\end{proof}

\begin{warn}
\Cref{muXStationary} is not a trivial consequence of \cref{nuGammaStationary}: a stationary measure for an action of a lattice subgroup does not usually lead in an obvious way to a stationary measure for the induced action of the ambient group. However,  D.\,Creutz \cite[\S3]{Creutz-stationary} has found a general construction.
\end{warn}

\begin{lem}[proof of \fullcref{NotInvt}{G}] \label{NotInvtPf}
If $\mu_X$ is $G$-invariant, then $\mu_Z$ is $\Gamma$-invariant.
\end{lem}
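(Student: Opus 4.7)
The plan is to exploit the product structure $\mu_X = m_K \times \mu_Z$ under the identification $X \simeq K \times Z$ \csee{muZmuXDefn}, and to push the $\Gamma$-invariance of $\mu_X$ (which is implied by $G$-invariance) down to $\mu_Z$. Heuristically, $\mu_Z$ is the conditional measure of $\mu_X$ on each fibre of the natural projection $X \to G/\Gamma$, and $\Gamma$ fixes the fibre over the identity coset and acts on it exactly as it does on $Z$ itself.

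In $K \times Z$-coordinates, I would first compute the $\Gamma$-action. For $\gamma \in \Gamma$ and $(k,z) \in K \times Z$, the point $\gamma^{-1} \cdot [(k,z)] = [(\gamma^{-1} k, z)]$ is identified with $(k'(k), \gamma'(k) \, z)$, where $\gamma^{-1} k = k'(k) \, \gamma'(k)$ is the unique $K \cdot \Gamma$-decomposition from \pref{G=KGamma-eqn}. This is \emph{not} a product action, because $\gamma'(k)$ depends on $k$. The crucial observation is that $\gamma'(k)$ is constant on a compact open subset of positive $m_K$-measure: let $L = K \cap \gamma^{-1} K \gamma$, a compact open subgroup of $K$ (since both $K$ and its conjugate are), and let $K_0 = \gamma L \gamma^{-1} = K \cap \gamma K \gamma^{-1}$. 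For $k = \gamma l \gamma^{-1} \in K_0$ with $l \in L \subseteq K$, we have $\gamma^{-1} k = l \cdot \gamma^{-1}$, so by uniqueness of the decomposition $k'(k) = l \in L$ and $\gamma'(k) = \gamma^{-1}$ identically on $K_0$.

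With this in hand, for any continuous $f \colon Z \to \RR$, I would test $\mu_X$ against $F \colon X \to \RR$ defined on $K \times Z$ by $F(k,z) = m_K(L)^{-1} \, 1_L(k) \, f(z)$. By Fubini, $\int_X F \, d\mu_X = \int_Z f \, d\mu_Z$. Meanwhile, $\int_X (F \circ \gamma^{-1}) \, d\mu_X$ is supported on $\{k \in K : k'(k) \in L\}$, which (by uniqueness of the $K\Gamma$-decomposition) equals exactly $K_0$; using the constancy of $\gamma'(k)$ there, together with $m_K(K_0) = m_K(L)$ (conjugation invariance of Haar measure on the unimodular group $G$), this second integral reduces to $\int_Z f(\gamma^{-1} z) \, d\mu_Z(z)$. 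The assumed $\Gamma$-invariance of $\mu_X$ then gives $\int_Z f \, d\mu_Z = \int_Z f \, d(\gamma^{-1})_* \mu_Z$ for every continuous $f$, and hence $\gamma_* \mu_Z = \mu_Z$ for every $\gamma \in \Gamma$.

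The main obstacle is the non-product nature of the $\Gamma$-action on $X \simeq K \times Z$: a naive Fubini argument using a product test function will not isolate $\gamma$ acting on $\mu_Z$ alone, because $\gamma'(k)$ varies with $k$. The resolution relies on the $p$-adic feature that $K$ is an \emph{open} subgroup of $G$, so we may freely produce small compact open subgroups such as $L$; combined with the discreteness of $\Gamma$ and the uniqueness of the $K\Gamma$-decomposition, this forces $\gamma'$ to take the single value $\gamma^{-1}$ on the positive-measure set $K_0$, allowing the $\Gamma$-invariance of $\mu_X$ to be transferred cleanly to $\mu_Z$.
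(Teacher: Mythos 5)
Your argument is correct and is essentially the paper's own proof in test-function form: the paper likewise uses the openness of~$K$ to pick a small open set $K_0$ with $\gamma K_0\gamma^{-1}\subseteq K$, notes that $\gamma[K_0\times Z_0]=[\gamma K_0\gamma^{-1}\times\gamma Z_0]$, applies the product formula $\mu_X=m_K\times\mu_Z$, and cancels the $K$-factor via unimodularity of~$G$. Your explicit identification of the cocycle $\gamma'(k)\equiv\gamma^{-1}$ on $K\cap\gamma K\gamma^{-1}$ is just the pointwise version of that same computation, so the two proofs coincide in substance.
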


\begin{proof}
Given $\gamma \in \Gamma$ and $Z_0 \subseteq Z$, we wish to show $\mu_Z(Z_0) = \mu_Z(\gamma Z_0)$. Since $K$ is open, there is an open neighbourhood $K_0$ of~$\1$ in~$K$, such that $\gamma K_0 \gamma^{-1} \subseteq K$. Then
	\[ \gamma [K_0 \times Z_0] 
	= [\gamma K_0 \times Z_0] 
	= [\gamma K_0\gamma^{-1} \times \gamma Z_0]
	, \]
so
	\begin{align*}
	m_K(K_0) \cdot \mu_Z(Z_0)
	&= \mu_X \bigl( [K_0 \times Z_0] \bigr) 
		&& \text{(definition of~$\mu_X$)}
	\\&= \mu_X \bigl( \gamma [K_0 \times Z_0] \bigr)
		&& \text{(we are assuming that $\mu_X$ is $G$-invariant)}
	\\&= \mu_X \bigl( [\gamma K_0\gamma^{-1} \times \gamma Z_0] \bigr)
		&& \text{(definition of equivalence relation on $G \times Z$)}
	\\&= m_K (\gamma K_0 \gamma^{-1}) \cdot \mu_Z(\gamma Z_0)
		&& \text{(definition of~$\mu_X$)}
	. \end{align*}
However, 
	\[ m_K (\gamma K_0 \gamma^{-1})
	= m_G(\gamma K_0 \gamma^{-1})
	= m_G(K_0)
	= m_K(K_0) . \]
(The middle equality is because the semisimple group~$G$ is unimodular.)
Therefore, by cancelling a factor of $m_K (K_0)$, we conclude that $\mu_Z(Z_0) = \mu_Z(\gamma Z_0)$.
\end{proof}

\subsection{Contraction on the \texorpdfstring{$\RR$}{R}-orbits}

\begin{lem}[resolution of the technical issue in \cref{chiDefn}, cf.\ {\cite[Lem.~5.13]{DeroinHurtado}}] \label{chiDefnIssue}
$D^\leaf_{\!a} (x)$ exists for $\mu_X^P$-a.e.\ $x \in X$.
\end{lem}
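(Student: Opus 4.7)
The plan is to transfer the statement to one about $\mu_X$ by exploiting the averaging formula of \cref{FurstenburgMuP}. The key observation is that if $F \subseteq X$ is $K$-invariant and $\mu_X$-null, then
\[ 0 = \mu_X(F) = \int_K \mu_X^P(k^{-1} F) \, dm_K(k) = \int_K \mu_X^P(F) \, dm_K(k) = \mu_X^P(F), \]
so $F$ is automatically $\mu_X^P$-null. Hence it suffices to exhibit a $K$-invariant set $E' \supseteq E := \{\, x \in X \mid \dleaf_a(x) \text{ does not exist} \,\}$ with $\mu_X(E') = 0$.

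To construct such an $E'$, I will exploit the identification $X \simeq K \times Z$ from \cref{G=KxGamma} together with the decomposition $G = K \Gamma$. For each $k_0 \in K$, factor $a k_0 = k_0^\ast \, \gamma(k_0)$ uniquely with $k_0^\ast \in K$ and $\gamma(k_0) \in \Gamma$; uniqueness follows from $\Gamma \cap K = \{\1\}$ \csee{G=KxGamma}. Since $aK$ is compact and the cosets $\{K\gamma\}_{\gamma \in \Gamma}$ form a clopen partition of $G = K \Gamma$, only finitely many $\gamma_1, \ldots, \gamma_n \in \Gamma$ appear in the image of the map $k_0 \mapsto \gamma(k_0)$, and the preimages $K_i := \gamma^{-1}(\gamma_i)$ form a clopen partition of $K$.

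A short computation then gives, for $(k_0, z) \in K_i \times Z$,
\[ a \cdot [(k_0, z+t)] = [(k_0^\ast \gamma_i, z+t)] = [(k_0^\ast, \gamma_i(z+t))], \]
so the displacement of $a \cdot [(k_0, z)]$ along its $\RR$-orbit as $t$ varies is precisely $\gamma_i(z+t) - \gamma_i(z)$. Thus $\dleaf_a(k_0, z)$ exists if and only if $\dleaf_{\gamma_i}(z)$ exists, and the two values coincide. Setting $E_i := \{\, z \in Z \mid \dleaf_{\gamma_i}(z) \text{ does not exist} \,\}$, we obtain $E = \bigsqcup_i K_i \times E_i$. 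Since $\mu_X = m_K \times \mu_Z$, each $K_i$ has positive $m_K$-measure, and $\mu_X(E) = 0$ is known by hypothesis, so each $E_i$ is $\mu_Z$-null. The enlarged set $E' := K \times \bigcup_i E_i$ is then $K$-invariant (the $K$-action is on the first factor only), contains $E$, and satisfies $\mu_X(E') = 0$. The opening observation then yields $\mu_X^P(E) \le \mu_X^P(E') = 0$, as desired.

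The only non-routine ingredient is the coordinate formula $\dleaf_a(k_0, z) = \dleaf_{\gamma(k_0)}(z)$, together with the finiteness of the image of $k_0 \mapsto \gamma(k_0)$ coming from compactness of $aK$ and clopenness of the cosets $K\gamma$. Everything else is a short Fubini/averaging computation. The conceptual point is that the $K$-invariance of the enlargement $E'$ is exactly what lets us pass from $\mu_X$-nullity to $\mu_X^P$-nullity.
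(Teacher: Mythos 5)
Your proof is correct, and it rests on the same structural fact as the paper's own argument --- namely \fullcref{Xbundle}{OnLeaf} together with the openness of the cosets $K\gamma$, which makes the element $\gamma(a,k_0)$ through which $a$ acts on the fibre $[\{k_0\}\times Z]$ locally constant in~$k_0$. Where you differ is in how this is exploited. The paper keeps the statement local (each point of the bad set~$B$ has a $K$-neighbourhood inside~$B$) and argues by contradiction, bounding $\mu_X(B)$ from below by $m_K(\open')$ times the $\mu_Z$-measure of the $Z$-shadow of a positive-$\mu_X^P$-measure piece of~$B$. You instead globalize: compactness of $aK$ gives the finite clopen partition $K=\bigsqcup_i K_i$, hence the exact product decomposition $E=\bigsqcup_i K_i\times E_i$, and then the transfer from $\mu_X$-nullity to $\mu_X^P$-nullity is done by passing to the $K$-invariant hull $K\times\bigcup_i E_i$ and using \cref{FurstenburgMuP} in the form ``$K$-invariant $\mu_X$-null sets are $\mu_X^P$-null.'' This last observation is a genuine gain in transparency: the paper's final step (that the $Z$-shadow of a set of positive $\mu_X^P$-measure has positive $\mu_Z$-measure) is exactly the contrapositive of your averaging remark and is left implicit there, so your version both avoids the contradiction format and fills in that point explicitly. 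The only cosmetic caveats are that positivity of $m_K(K_i)$ should be said to come from $K_i$ being nonempty and open, and that $\mu_X^P(E)\le\mu_X^P(E')$ tacitly uses measurability of~$E$ (harmless, and no worse than the paper's own treatment).
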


\begin{proof}
Let 
	\[ \text{$B = \{\, x\in X \mid \text{$D^\leaf_{\!a} (x)$ does not exist} \,\}$ \quad (the ``bad'' set)}. \]
Suppose $[(k,z)] \in B$, and write $ak = k' \gamma$ with $k' \in K$ and $\gamma \in \Gamma$. Then $a$ acts on the entire $\RR$-orbit $[(k,z +\RR)]$ via the element~$\gamma$ of~$\Gamma$ \fullcsee{Xbundle}{OnLeaf}. Since $[(k,z)] \in B$, this implies that $\dleaf_\gamma(z)$ does not exist.
However, $K \gamma$ is open, so there is a neighbourhood~$\open$ of~$\1$ in~$K$, such that $a\open k \subseteq K \gamma$. Then $a$ acts via~$\gamma$ on every $\RR$-orbit in $[\open k \times Z]$. This implies $\open [(k,z)]  \subseteq B$.

For each $x \in B$, we have shown there is a neighbourhood $\open$ of~$\1$ in~$K$, such that $\open \, x \subseteq B$. If $\mu_X^P(B) > 0$, then there are a subset~$B'$ of positive measure, and a (small) neighbourhood~$\open'$ of~$\1$ in~$K$, such that $\open' B' \subset B$. Then, since $\mu_X = m_K \times \mu_Z$, we have
	\[ \mu_X(B) \ge \mu_X(\open' B') \ge m_K(\open') \cdot \mu_Z \bigl \{\, z \in Z \mid \bigl(K \times \{z\} \bigr) \cap B' \neq \emptyset \,\} \neq 0. \]
This contradicts the fact that $D^\leaf_{\!a} (x)$ exists for $\mu_X$-a.e.\ $x \in X$.
\end{proof}

\begin{lem}[proof of \cref{chiPHomoOverview}, {\cite[\S7.1, p.~43]{DeroinHurtado}}] \label{chiPHomo}
$\chi_P$ is a homomorphism.
\end{lem}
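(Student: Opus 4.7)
The plan is to apply the Chain Rule pointwise, split the logarithm, and then use the $P$-invariance of $\mu_X^P$ (via $A \subseteq P$) to do a change of variables.

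First, I would fix $a, b \in A$ and note that, since the $\RR$-action on $X$ commutes with no element of $G$ in general, but each element of $G$ maps $\RR$-orbits to $\RR$-orbits and restricts to a Lipschitz map on each orbit (\cref{LipOnLeaves}), the leaf-derivative satisfies the Chain Rule wherever the derivatives exist:
\[ D^\leaf_{ab}(x) = D^\leaf_{a}(bx) \cdot D^\leaf_{b}(x) \qquad \text{for $\mu_X^P$-a.e.\ $x \in X$.} \]
Existence of both $D^\leaf_b(x)$ and $D^\leaf_a(bx)$ at $\mu_X^P$-a.e.\ $x$ follows from \cref{chiDefnIssue} applied to $b$, together with the fact that $b_* \mu_X^P = \mu_X^P$ (see the next step), which allows \cref{chiDefnIssue} for $a$ to be pushed back through~$b$.

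Next, taking logarithms and integrating against $\mu_X^P$, I would write
\[ \chi_P(ab) = \int_X \log D^\leaf_a(bx) \, d\mu_X^P(x) + \int_X \log D^\leaf_b(x) \, d\mu_X^P(x) . \]
The second integral is $\chi_P(b)$ by definition. For the first integral, I perform the change of variables $y = bx$; since $b \in A \subseteq P$ and $\mu_X^P$ is $P$-invariant, we have $b_* \mu_X^P = \mu_X^P$, hence
\[ \int_X \log D^\leaf_a(bx) \, d\mu_X^P(x) = \int_X \log D^\leaf_a(y) \, d(b_* \mu_X^P)(y) = \int_X \log D^\leaf_a(y) \, d\mu_X^P(y) = \chi_P(a) . \]
Combining the two pieces gives $\chi_P(ab) = \chi_P(a) + \chi_P(b)$, as required.

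The only conceptual input beyond the Chain Rule is the $A$-invariance of $\mu_X^P$, which is immediate from $A \subseteq P$; there is no real obstacle. A minor bookkeeping point is to confirm $\chi_P(a)$ is finite so that the splitting $\log(uv) = \log u + \log v$ is legitimate inside the integral; this follows from the fact that, for $a$ in the compact support of consideration, $D^\leaf_a$ is bounded above and below away from $0$ on each $\RR$-orbit by a uniform Lipschitz constant depending only on $a$ (\cref{LipOnLeaves} applied to both $a$ and $a^{-1}$), so $\log D^\leaf_a$ is bounded and hence integrable against any probability measure.
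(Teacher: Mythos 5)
Your proposal is correct and follows essentially the same route as the paper: apply the Chain Rule to get $D^\leaf_{ab}(x) = D^\leaf_a(bx)\,D^\leaf_b(x)$, split the logarithm and integrate, and then use the $a_2$-invariance of $\mu_X^P$ (from $A \subseteq P$) to turn the first integral into $\chi_P(a)$. The extra bookkeeping you supply (pushing the $\mu_X^P$-a.e.\ existence of the derivative through $b$ via $b_*\mu_X^P = \mu_X^P$, and bounding $\log D^\leaf_a$ using \cref{LipOnLeaves} for $a$ and $a^{-1}$ to justify integrability) is sound and merely makes explicit points the paper leaves implicit.
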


\begin{proof}
We have
	\begin{align*}
	 \chi_P(a_1 a_2)
	&= \int_X \log D^\leaf_{\!a_1 a_2} (x)  \, d \mu_X^P(x)
	\\&= \int_X \log D^\leaf_{a_1} (a_2 x)  \, d \mu_X^P(x) + \int_X \log D^\leaf_{\!a_2} (x)  \, d \mu_X^P(x)
		&& \text{(Chain Rule)}
	\\&= \int_X \log D^\leaf_{a_1} (a_2 x)  \, d \mu_X^P(x) + \chi_P(a_2) 
	. \end{align*}
Since $A \subseteq P$, we know that $\mu_X^P$ is $a_2$-invariant, so 
	\[  \int_X \log D^\leaf_{\!a_1} (a_2 x)  \, d \mu_X^P(x)
	= \int_X \log D^\leaf_{\!a_1} (x)  \, d \mu_X^P(x)
	= \chi_P(a_1) . \qedhere \]
\end{proof}

Since the measure $\mu_Z$ is ergodic for the $\RR$-action, it can be shown that the measure $\mu_X^P$ is ergodic for the action of~$P$:

\begin{lem}[{\cite[Lem.~6.5]{DeroinHurtado}}] \label{muXPErgodic}
The $P$-invariant measure $\mu_X^P$ is ergodic.
\end{lem}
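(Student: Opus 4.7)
The plan is to leverage the uniqueness clause of Furstenberg's Theorem~\ref{FurstenburgMuP} to translate $P$-ergodicity of~$\mu_X^P$ into an extremality statement about~$\mu_X$, and then to trace that statement back to the $\RR$-ergodicity of~$\mu_Z$ that was built into the almost-periodic space~$Z$ from the outset.

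The first step is a Furstenberg-type correspondence. The map $\nu \mapsto \int_K k_*\nu \, dm_K(k)$ is an injective affine map from the set of $P$-invariant probability measures on~$X$ into the set of $K$-invariant, $\mu_G$-stationary probability measures on~$X$: injectivity is the uniqueness in \cref{FurstenburgMuP}, and a short calculation using bi-$K$-invariance of~$\mu_G$ \fullcref{nuGDefn}{G-Kinvt} together with the Iwasawa-type decomposition $G = KP$ (and $P$-invariance of~$\nu$) shows the image is $\mu_G$-stationary. Since affine injections preserve extreme points, a nontrivial decomposition of~$\mu_X^P$ into $P$-invariant probabilities would yield a nontrivial decomposition of~$\mu_X$ into $K$-invariant $\mu_G$-stationary probabilities. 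Thus it suffices to show that $\mu_X$ is extremal in this latter convex set.

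The second step is to descend from~$X$ to~$Z$. Under the identification $X \simeq K \times Z$ \csee{G=KxGamma}, every $K$-invariant probability measure on~$X$ factors as $m_K \times \nu_Z$ for a unique probability measure $\nu_Z$ on~$Z$. Using $\mu_G = m_K \times \mu_\Gamma$ \csee{nuGammaDefn} together with the description of the $G$-action on fibres~\fullcref{Xbundle}{OnLeaf}, one checks that $\mu_G$-stationarity of $m_K \times \nu_Z$ is equivalent to $\mu_\Gamma$-stationarity of~$\nu_Z$. Hence extremality of $\mu_X = m_K \times \mu_Z$ among $K$-invariant $\mu_G$-stationary measures on~$X$ is equivalent to extremality of~$\mu_Z$ among $\mu_\Gamma$-stationary measures on~$Z$, i.e.\ to $\mu_\Gamma$-ergodicity of~$\mu_Z$ in the stationary sense.

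The third step is to derive $\mu_\Gamma$-ergodicity of~$\mu_Z$ from its $\RR$-ergodicity \csee{muZmuXDefn}. A nontrivial $\mu_\Gamma$-ergodic decomposition would produce a nonconstant bounded $\mu_\Gamma$-harmonic function $f$ on~$Z$. The Lipschitz, orientation-preserving action of~$\Gamma$ on each $\RR$-orbit~\fullcref{AlmPerSpace}{orpresLip} combined with the vanishing-mean-displacement identity~\pref{MeanDisplacement} says the $\mu_\Gamma$-random walk has no drift along the $\RR$-flow, which should force $f$ to be $\RR$-invariant $\mu_Z$-a.e. (a Choquet--Deny / Poisson-boundary type rigidity argument), after which $\RR$-ergodicity of~$\mu_Z$ forces $f$ to be a.e.\ constant, the desired contradiction. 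The main obstacle is precisely this last step: because $\Gamma$ does not commute with~$\RR$, one cannot simply ``average out'' the $\RR$-direction, and the key point is to show, using~\pref{MeanDisplacement} and martingale convergence along $\mu_\Gamma$-trajectories, that the boundary values of~$f$ cannot distinguish points in the same $\RR$-orbit.
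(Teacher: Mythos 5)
Your first two steps run parallel to the paper's own argument: the paper likewise uses the uniqueness clause of \cref{FurstenburgMuP} to convert a nontrivial $P$-invariant decomposition $\mu_X^P = s\mu_1 + t\mu_2$ into a nontrivial decomposition of $\mu_X$ into $\mu_G$-stationary measures, i.e.\ into a nonconstant bounded invariant function for the random walk. (Your extra reduction from $X$ to $Z$ via $K$-invariant measures being of the form $m_K \times \nu_Z$ is fine under \cref{G=KGamma}, but it is not where the difficulty lies.) The genuine gap is in your third step, which you yourself flag as the ``main obstacle'': the claim that the zero-drift identity \pref{MeanDisplacement} plus martingale convergence forces the invariant function to be constant along $\RR$-orbits is not a proof, and the mechanism you propose is insufficient. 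What \pref{MeanDisplacement} and martingale convergence give is only that, along $\mu_\Gamma^\infty$-a.e.\ trajectory, the gap $\pi_n(z+t) - \pi_n(z)$ converges to a \emph{finite} limit; nothing in a Choquet--Deny-type argument makes that limit zero, and if it is not zero the boundary values can perfectly well separate points of the same $\RR$-orbit. (Choquet--Deny rigidity is a statement about abelian or nilpotent groups of translations; here $\Gamma$ neither commutes with the flow nor acts by translations, as you note.)

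The missing ingredient is precisely the global contraction of $\RR$-orbits under the random walk, i.e.\ \cref{EveryActContractsOverview} and \cref{RandomContractOverview} (Props.~6.3, 6.4 and~6.8 of Deroin--Hurtado). These are not consequences of zero drift alone: they rest on the trichotomy for fixed-point-free actions on~$\RR$ together with the lattice-specific facts \fullcref{SSLattFacts}{abel} and \fullcref{SSLattFacts}{CircleAction}, which rule out the translation-like degenerate cases in which contraction (and hence your desired $\RR$-invariance of harmonic functions) genuinely fails. The paper's proof of \cref{muXPErgodic} invokes exactly this: the random-walk analogue of the Pointwise Ergodic Theorem~\pref{PointwiseErgThm} together with \cref{RandomContract} shows the invariant function is essentially constant on a.e.\ $\RR$-orbit, and only then does the $\RR$-ergodicity of $\mu_Z$ \csee{muZmuXDefn} finish the argument. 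So your outline reaches the right reduction but leaves the decisive step unproved; to close it you must import the contraction results rather than a drift argument.
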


\begin{proof}[Idea of proof]
Suppose $\mu_X^P$ is not ergodic, so $\mu_X^P = s \mu_1 + t \mu_2$ is a convex combination of two different $P$-invariant probability measures (with $st \neq 0$). Then
	\[ \mu_X 
	= \int_K k_* \mu_X^P \, dm_K
	=  s\int_K k_* \mu_1 \, dm_K +  t \int_K k_* \mu_2 \, dm_K
	\]
is a convex combination of two $G$-stationary measures. Also, the uniqueness in \cref{FurstenburgMuP} implies that neither of these stationary measures is equal to~$\mu_X$. So $\mu_X$ is not ergodic as a stationary measure.

Hence, there is a $G$-invariant function~$f$ that is \ul{not} constant (a.e.). 
The Pointwise Ergodic Theorem~\pref{PointwiseErgThm} has an analogue for random walks with a stationary measure \cite[Cor.~4.8]{DeroinHurtado} (cf.\ the proof of \cref{ChiNontriv}). 
Combining this with \cref{RandomContract} (contraction on $\RR$-orbits) implies that $f$ is essentially constant on a.e.\ $\RR$-orbit in~$X$. Since the $\RR$-action on~$Z$ is ergodic, this implies that $f \bigl( [(h, z)] \bigr)$ is essentially independent of~$z$. But the $G$-invariance implies that it is also essentially independent of~$h$. So $f$ is constant~(a.e.). This is a contradiction.
\end{proof}

\begin{lem}[proof of \cref{chiPLocalOverview}, {\cite[Lem.~7.1]{DeroinHurtado}}] \label{chiPLocal}
If $a \in A$, such that $\chi_P(a) < 0$, then for $\mu_X^P$-a.e.~$x \in X$, there exists $\epsilon = \epsilon(x) > 0$, such that
	\[ \text{$a^n (x - \epsilon) \ - \ a^n (x + \epsilon) \ \to \ 0$ \quad as $n \to \infty$} . \]
\end{lem}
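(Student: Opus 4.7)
The plan is to combine the Pointwise Ergodic Theorem applied to $F := \log D^\leaf_a$ (which gives exponential decay of $D^\leaf_{a^n}$ at the base point $x$) with a Pesin-style tempered-neighbourhood argument that propagates this pointwise decay to an entire small leaf-interval around~$x$.

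First, since $a \in A \subseteq P$, the measure $\mu_X^P$ is $a$-invariant and $F \in L^1(X, \mu_X^P)$. The Chain Rule (as in \cref{chiPHomo}) gives $\log D^\leaf_{a^n}(x) = \sum_{i=0}^{n-1} F(a^i x)$, so \cref{PointwiseErgThm} applied on the $\langle a \rangle$-ergodic components of $\mu_X^P$ yields $\tfrac{1}{n} \log D^\leaf_{a^n}(x) \to \tilde\chi(x)$ $\mu_X^P$-a.e., where $\tilde\chi$ is $a$-invariant with $\int \tilde\chi \, d\mu_X^P = \chi_P(a) < 0$. The conclusion of the lemma is only needed where $\tilde\chi(x) < 0$, and on this set I can fix $\alpha \in (\tilde\chi(x), 0)$ and obtain $D^\leaf_{a^n}(x) \le e^{n\alpha}$ for $n \ge N(x)$. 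I would also verify that this set has full $\mu_X^P$-measure: combining the $a$-invariance of $\tilde\chi$ with the $P$-ergodicity from \cref{muXPErgodic} should force $\tilde\chi$ to be a.s.\ constant equal to $\chi_P(a) < 0$.

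Second, I would upgrade the pointwise bound to a \emph{uniform} bound on a small interval in the $\RR$-leaf through~$x$. By Lusin's theorem, for any $\delta > 0$ there is a compact set $K^\ast \subseteq X$ with $\mu_X^P(K^\ast) > 1 - \delta$ on which $F$ is uniformly continuous; Birkhoff applied to $\mathbf{1}_{K^\ast}$ guarantees that the orbit $\{a^i x\}_{i \ge 0}$ visits $K^\ast$ with frequency $> 1 - 2\delta$ for a.e.~$x$. Using the uniform leafwise Lipschitz constant $C = C(a)$ from \cref{LipOnLeaves}, I would construct $\epsilon(x) > 0$ inductively so that the iterated intervals $J_i := a^i\bigl([x - \epsilon(x), x + \epsilon(x)]\bigr)$ shrink geometrically, say $|J_i| \le C_0(x)\, \epsilon(x)\, e^{i \alpha'}$ for some $\alpha' \in (\alpha, 0)$; at step $i+1$, uniform continuity of $F$ on visits to $K^\ast$ controls the variation of $\log D^\leaf_a$ across $J_i$, and telescoping via the Chain Rule closes the induction. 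This gives $\sup_{y \in J_0} D^\leaf_{a^n}(y) \le C_0(x)\, e^{n\alpha'}$, so that
\[ \bigl| a^n(x+\epsilon(x)) - a^n(x - \epsilon(x)) \bigr| \;=\; \int_{-\epsilon(x)}^{\epsilon(x)} D^\leaf_{a^n}(x + t)\, dt \;\le\; 2\, \epsilon(x)\, C_0(x)\, e^{n \alpha'} \;\longrightarrow\; 0. \]

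The main obstacle is precisely the second step: constructing a \emph{tempered} neighbourhood $\epsilon(x) > 0$ on which one controls the leafwise derivatives of \emph{all} forward iterates $a^n$, not merely at the base point. This is the Pesin-theoretic heart of the argument, and the delicate point is a Borel--Cantelli bookkeeping that keeps $\epsilon(x)$ strictly positive for $\mu_X^P$-a.e.~$x$ while trading off the shrinkage of the iterated intervals against the frequency of returns to the continuity set $K^\ast$. The subsidiary technical point is the $\langle a \rangle$-ergodicity issue noted above, which I would resolve either by a direct reduction to the components on which $\tilde\chi < 0$ or by upgrading the $P$-ergodicity of \cref{muXPErgodic} to $a$-ergodicity using the specific structure of $P$ and~$A$.
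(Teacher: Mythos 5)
Your first step is fine, but the second step --- the ``Pesin-style tempered neighbourhood'' --- contains a genuine gap, and you correctly identify it as the heart of the matter without closing it. The specific failure is in the sentence ``uniform continuity of $F$ on visits to $K^\ast$ controls the variation of $\log D^\leaf_a$ across $J_i$'': Lusin's theorem only gives continuity of the \emph{restriction} $F|_{K^\ast}$, so knowing $a^i x \in K^\ast$ tells you nothing about the values of $F$ at the other points of the interval $J_i$, which in general lie outside $K^\ast$ and at which $F$ can be arbitrarily wild (indeed $\dleaf_a$ is only defined almost everywhere and has no modulus of continuity along the $\RR$-orbits --- the elements of $G$ are merely leafwise Lipschitz, not $C^{1+\alpha}$, so the H\"older regularity of the derivative cocycle that makes genuine Pesin theory work is simply not available here). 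Consequently the inductive bound $\sup_{y \in J_0} D^\leaf_{a^n}(y) \le C_0(x) e^{n\alpha'}$ cannot be established this way, and the proposal does not close. A secondary soft spot is your treatment of the $\langle a\rangle$-ergodic components: $\tilde\chi$ is $a$-invariant but not obviously $P$-invariant, so \cref{muXPErgodic} does not force it to be constant; what is actually needed (and what the paper invokes from \cite[Prop.~7.2]{DeroinHurtado}) is that the integral of $\log \dleaf_a$ is the same on every $a$-ergodic component of $\mu_X^P$.

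The paper's proof of \cref{chiPLocal} avoids the regularity problem entirely by replacing the infinitesimal derivative with a finite-scale expansion function: for small $\eta>0$ it sets
\[ D_\eta(x) \;=\; \sup_{0 < \epsilon < \eta} \log \left( \frac{a(x+\epsilon) - a(x)}{\epsilon} \right) , \]
which by its very definition dominates the log-expansion of \emph{any} leaf interval of length $<\eta$ based at $x$, with no continuity of $\dleaf_a$ required. These functions are uniformly bounded (by the leafwise Lipschitz constant of~$a$, \cref{LipOnLeaves}) and converge pointwise a.e.\ to $\dleaf_a$, so Dominated Convergence gives $\int_X D_\eta \, d\mu_X^P < \chi_P(a) + \delta$ for $\eta$ small. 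One then telescopes exactly as you do, but bounds the $i$-th term of the telescoping sum by $D_\eta(a^{i-1}x)$ as long as the iterated interval has length $<\eta$, applies the Pointwise Ergodic Theorem to the single fixed function $D_\eta$, and chooses $\epsilon$ so small that the resulting exponential bound $\epsilon\, e^{nc}$ (with $c = 2\delta + \chi_P(a) < 0$) keeps all iterated intervals below~$\eta$; induction then makes the estimate valid for all~$n$. So your overall architecture (Birkhoff plus chain-rule telescoping plus an induction keeping intervals small) matches the paper, but the one idea your write-up is missing --- and the one that makes the argument rigorous without any Pesin machinery --- is to run the ergodic theorem on the finite-scale supremum $D_\eta$ rather than on $\log \dleaf_a$ itself.
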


\begin{proof}
For (small) $\eta > 0$, let
	\[D_\eta(x) = \sup_{0 < \epsilon < \eta} \log \left( \frac{a(x + \epsilon) - a(x)}{\epsilon} \right) . \]
These functions are uniformly bounded, because we know that $a$ is uniformly Lipschitz on the $\RR$-orbits \csee{LipOnLeaves}. Also, the functions converge pointwise to $\dleaf_a$ as $\eta \to 0$. By the Dominated Convergence Theorem, then
	\begin{align} \label{chiPLocalPf-toChi}
	 \textstyle \int_X D_\eta \, d\mu_X^P \stackrel{\eta \to 0}{\longrightarrow} \chi_P(a) 
	 . \end{align}

Now, a technical point. We will apply the Pointwise Ergodic Theorem~\pref{PointwiseErgThm} to the action of~$a$. This requires an ergodic measure. We know from \cref{muXPErgodic} that the measure $\mu_X^P$ is ergodic for the $P$-action, but we cannot expect it to be ergodic for the action of~$a$. Therefore, we should consider the ergodic components of the measure. Fortunately, it can be shown that the integral in \cref{chiDefn} that defines~$\chi_P(a)$ has the same value for each ergodic component \cite[Prop.~7.2]{DeroinHurtado}, so this is not a serious problem. Therefore, we will simply ignore it, and act as if the measure $\mu_X^P$ is ergodic for the action of~$a$.

Note that, for $\epsilon, \delta > 0$, $\eta$ small, and $n$~large, we have
	\begin{align*}
	\frac{1}{n} \log \left( \frac{ a^n (x + \epsilon) - a^n x }{\epsilon} \right)
	&= \frac{1}{n}  \bigl[ \log \bigl( a^n (x + \epsilon) - a^n x \bigr] - \log \epsilon \bigr]
	\\&= \frac{1}{n}  \sum_{i=1}^n \log \left( \frac{ a^i (x + \epsilon) - a^i x }{a^{i-1} (x + \epsilon) - a^{i-1} x } \right)
		&& \text{(telescoping sum)}
	\\&\le \frac{1}{n} \sum_{i=1}^n D_\eta(a^{i-1}x)
		&& \begin{pmatrix} \text{if $a^{i-1} (x + \epsilon) - a^{i-1} x < \eta$}\\ \text{for $1 \le i < n$}\end{pmatrix}
	\\&< \delta + \int_X D_\eta \, d\mu_X^P
		&& \text{(Pointwise Ergodic Theorem~\pref{PointwiseErgThm})}		
	\\&< 2\delta + \chi_P(a)
		&& \text{\pref{chiPLocalPf-toChi}}
	. \end{align*}
So
	\[ \text{$a^n (x + \epsilon) - a^n x \ < \ \epsilon \, e^{n c }$ \quad where $c = 2\delta + \chi_P(a)$} . \]
We can choose $\delta$ small enough that $2\delta + \chi_P(a) < 0$, so $e^{nc} \tends 0$. Then we may choose $\epsilon$ small enough that $e^{n c} < \eta$ for all~$n$. Then we conclude, by induction, that $a^n (x + \epsilon) - a^n x < \eta$ for all~$n$. So the above estimate applies for all~$n$. Hence, we have shown that $a^n (x + \epsilon) - a^n x \tends 0$ (exponentially fast).

A similar argument with negative $\epsilon$ completes the proof.
\end{proof}

\subsection{\texorpdfstring{$\mu_X^P$}{The measure on X} is \texorpdfstring{$G$}{G}-invariant}

\begin{rem}[further details for the proof of \cref{C_G(a)}] \label{C_G(a)Details}
The proof of \cref{C_G(a)} strongly uses the fact that $\mu_X^P$ is $U_a^+$-invariant, and also uses the fact that $\pi_* (c_*\mu_X^P)$ is a ${}^c\!P$-invariant probability measure, but the latter is needed only to conclude that the support of this measure is all of $G/\Gamma$.

Fortunately, both of these properties are true of the ergodic components. Firstly, it follows from the Mautner Lemma \cite[Lem.~II.3.6, p.~61]{BekkaMayerBook} that if a probability measure is invariant under both~$a$ and its expanding horospherical subgroup~$U_a^+$, then each $a$-ergodic component~$\mu$ of the measure is invariant under~$U_a^+$. 
Furthermore, it can be shown (by an argument very similar to the proof of \fullcref{PActsMinimal}{set}) that $G/\Gamma$ has no nonempty, proper, closed subsets that are invariant under $\langle a, U_a^+ \rangle$. 
This implies that the support of $\pi_*\mu$ is all of $G/\Gamma$.
\end{rem}

\begin{cor}[proof of the general case of \cref{propagating}, {\cite[Thm.~8.1 (and~\S9)]{DeroinHurtado}}] \label{GeneralPropagatingPf}
The measure $\mu_X^P$ is $G$-invariant. 
\end{cor}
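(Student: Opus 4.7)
The plan is to generalize the rank-two argument of the excerpt to arbitrary rank by iterating the wall-crossing construction until we reach a Weyl chamber whose nontrivial elements all lie in the open halfspace $\halfspace = \{\, a \in A \mid \chi_P(a) < 0 \,\}$. By \cref{NotVacuous} the linear functional $\chi_P$ is nontrivial, so $\halfspace$ is a bona fide open halfspace in~$A$ and $\halfspace \cap \weyl_P \neq \emptyset$.

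First I would establish a geometric lemma: there exist a Weyl chamber $\weyl_{Q^*}$ with $\weyl_{Q^*} \smallsetminus \{\1\} \subseteq \halfspace$ and a gallery $\weyl_P = \weyl_{Q_0}, \weyl_{Q_1}, \dots, \weyl_{Q_m} = \weyl_{Q^*}$ of pairwise adjacent Weyl chambers such that the relative interior of each shared wall $W_i = \weyl_{Q_i} \cap \weyl_{Q_{i+1}}$ meets $\halfspace$. The chamber $\weyl_{Q^*}$ exists by genericity: the dual cones of the Weyl chambers cover $A^*$, so for a suitable choice of chamber the functional $-\chi_P$ lies in the interior of its dual, forcing $\chi_P < 0$ on every extreme ray. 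For the gallery, pick $a^P \in \halfspace \cap \weyl_P$ and $a^* \in \halfspace \cap \weyl_{Q^*}^\circ$, perturb slightly so that the segment from $a^P$ to $a^*$ is transverse to every root hyperplane, and read off the gallery from the sequence of chambers the segment traverses; convexity of $\halfspace$ guarantees the segment and every crossing point remain in $\halfspace$.

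Second, I would propagate by induction on $i$, showing $\mu_X^{Q_i} = \mu_X^P$ (hence $\chi_{Q_i} = \chi_P$) for every~$i$. Assuming this at step~$i$, pick $a \in W_i^\circ \cap \halfspace$: then $\chi_{Q_i}(a) = \chi_P(a) < 0$, and $a \in \weyl_{Q_i}$ forces $U_a^+ \subseteq Q_i$, so \cref{C_G(a)} gives $C_G(a)$-invariance of $\mu_X^{Q_i}$. Any lift to $N_G(A)$ of the simple reflection across $W_i$ fixes~$a$ and so lies in $C_G(a)$; consequently $\langle Q_i, C_G(a) \rangle$ is the standard rank-one enlargement of $Q_i$ across $W_i$, which contains the adjacent minimal parabolic $Q_{i+1}$. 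The uniqueness in \cref{FurstenburgMuP}, applied to $Q_{i+1}$ using $\mu_X = \int_K k_* \mu_X^{Q_i} \, dm_K(k)$ (inherited from $\mu_X^{Q_i} = \mu_X^P$ via the definition of $\mu_X^P$), then forces $\mu_X^{Q_{i+1}} = \mu_X^{Q_i} = \mu_X^P$.

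Finally, after the induction we have $\chi_{Q^*} = \chi_P$ strictly negative on $\weyl_{Q^*} \smallsetminus \{\1\}$. For each of the $r = \rank_{\QQ_p} G$ walls $W_j^*$ of $\weyl_{Q^*}$, pick $b_j$ in the relative interior: applying \cref{C_G(a)} with $Q^*$ in place of~$P$ yields $C_G(b_j)$-invariance of $\mu_X^{Q^*}$. Each $C_G(b_j)$ contains $A$ together with the rank-one subgroup attached to the simple root dual to $W_j^*$, and these rank-one subgroups generate the semisimple group~$G$; thus $\mu_X^P = \mu_X^{Q^*}$ is $G$-invariant. The main obstacle is the geometric Step~1, specifically arranging the perturbation so that every crossing of the segment from $a^P$ to $a^*$ occurs in the relative interior of a codimension-one Weyl chamber wall (not on a lower-dimensional face); once this transversality is guaranteed, the rest is routine bookkeeping built on \cref{C_G(a)} and the uniqueness in \cref{FurstenburgMuP}.
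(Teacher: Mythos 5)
Your gallery induction is sound as far as it goes, and it is essentially the paper's own wall-crossing mechanism: crossing one wall at a time via \cref{C_G(a)} applied at a point of the shared wall lying in $\halfspace$, then using the uniqueness in \cref{FurstenburgMuP} to conclude $\mu_X^{Q_{i+1}}=\mu_X^{Q_i}=\mu_X^P$, and finally running the ``easy case'' argument in a chamber on which $\chi_P$ is strictly negative. (The paper organizes the propagation through the set $\agood$ and the Weyl-group action rather than an explicit gallery, but that difference is cosmetic.)

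The genuine gap is your Step 1: the claim that some Weyl chamber $\weyl_{Q^*}$ satisfies $\weyl_{Q^*}\smallsetminus\{\1\}\subseteq\halfspace$. The observation that the dual cones of the Weyl chambers cover the dual space does not place the \emph{fixed} functional $-\chi_P$ in the \emph{interior} of one of them, and in general no such chamber need exist. Concretely, take $G=\SL(2,\QQ_p)\times\SL(2,\QQ_p)$ and suppose $\chi_P$ is a positive multiple of a root of the first factor: then $\ker\chi_P$ contains one full wall of every Weyl chamber, so no closed chamber minus the identity lies in the open halfspace, and your final step --- which needs $\chi_{Q^*}(b_j)<0$ at points $b_j$ of \emph{every} wall of $\weyl_{Q^*}$ in order to invoke \cref{C_G(a)} --- cannot be started. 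Ruling out this degenerate possibility is precisely the new content of the general case. In its \cref{GeneralPropagatingPf-hard}, the paper shows that if propagation does not already give everything, then $\chi_P$ must be proportional to the simple root of a rank-one simple factor $G_1$; writing $G=G_1\times H$, it then uses \cref{C_G(a)} to make $\mu_X^P$, and hence $\mu_X$, invariant under $KH$, deduces that $\Gamma_0=\Gamma\cap KH$ acts by translations on the $\RR$-orbits and so is abelian, and contradicts this using the openness of~$K$, the irreducibility of~$\Gamma$, and Zariski density of $\pi(\Gamma_0)$ in~$G_1$. None of this appears in your plan, and it cannot be replaced by a transversality/perturbation argument, since $\chi_P$ is a specific functional, not a generic one; indeed, for real groups this case is the longest part of the Deroin--Hurtado argument. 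So your proposal does prove the corollary whenever a chamber inside $\halfspace$ exists (for instance for $G=\SL(3,\QQ_p)$), but not in the stated generality.
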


\begin{proof}
Let $\mathcal{Q}_{\text{good}}$ be the set of all minimal parabolic subgroups~$Q$ containing~$A$, such that $\mu_X^P$ is $Q$-invariant (or, equivalently, such that $\mu_X^Q = \mu_X^P$). Also let 
	\[ \halfspace = \{\, a \in A \mid \chi_P(a) < 0 \,\} . \] 
The arguments given in the ``Idea of proof'' of \cref{propagating} show that if $Q_1 \in \mathcal{Q}_{\text{good}}$, and $\weyl_{Q_2}$ is a Weyl chamber, such that $\halfspace$ contains some nonidentity element of $\weyl_{Q_1} \cap \weyl_{Q_2}$, then $Q_2 \in \mathcal{Q}_{\text{good}}$. Therefore, if we let 
	\[ \agood \ \coloneqq \bigcup_{Q \in \mathcal{Q}_{\text{good}}} \weyl_Q , \]
it is not difficult to show that $\halfspace \subseteq \agood$. 

Furthermore, if $w$ is any element of the Weyl group, such that $w(\halfspace) \cap \agood$ is nonempty, then one can repeat the argument with $w(\halfspace)$ in the place of~$\halfspace$, to conclude that $\agood$ also contains~$w(\halfspace)$. For every element~$w$ of the Weyl group, this implies that either $w(\agood) = \agood$ or $w(\agood)$ is disjoint from the interior of~$\agood$.

\refstepcounter{caseholder}

\begin{case}
Assume $\agood$ is invariant under the Weyl group.
\end{case}
Since the Weyl group acts transitively on the set of Weyl chambers, this implies that every Weyl chamber is in~$\agood$, so $\agood = A$. This means that every minimal parabolic subgroup is in~$\mathcal{Q}_{\text{good}}$, so $\mu_X^P$ is invariant under every minimal parabolic subgroup. Since the minimal parabolic subgroups generate~$G$, we conclude that $\mu_X^P$ is $G$-invariant, as desired.

\begin{case} \label{GeneralPropagatingPf-hard}
Assume $\agood$ is \ul{not} invariant under the Weyl group.
\end{case}
Recall that $\agood$ contains the halfspace~$\halfspace$, and also that, for every element~$w$ of the Weyl group, either $w(\agood) = A$, or $w(\agood)$ is disjoint from the interior of~$\agood$. Then the assumption of this case implies that $\agood = \overline{\halfspace}$ (the closure of~$\halfspace$) and, for every element of the Weyl group, either $w(\overline{\halfspace}) = \overline{\halfspace}$ or $w(\overline{\halfspace}) = \overline{\halfspace^{\text{op}}}$, the closure of the opposite halfspace $\{\, a \in A \mid \chi_P(a) > 0 \,\}$. 

The action of the Weyl group of a simple group is irreducible, so the only way this can happen is if:
	\begin{itemize}
	\item $G$ has a simple factor~$G_1$ with $\rank_{\QQ_p} G_1 = 1$,
	and
	\item $\chi_P = c \, \alpha$, where $\alpha$ is the (unique) simple root of~$G_1$ (for some ordering of the roots), and $c \in \RR^+$.
	\end{itemize}

Write $G = G_1 \times H$, and choose some $a \in A \cap G_1$, such that $\alpha(a) < 0$. Then $a$ centralizes~$H$, so we see from \cref{C_G(a)} that $\mu_X^P$ is $H$-invariant. The subgroup~$H$ is normal, so this implies that $k_* \mu_X^P$ is also $H$-invariant, for every $k \in K$:
	\[ h_* (k_* \mu_X^P) = k_* (k^{-1} h k)_* \mu_X^P = k_* \mu_X^P . \]
Since $\mu_X$ is the integral of these $H$-invariant measures \csee{FurstenburgMuP}, we conclude that $\mu_X$ is $H$-invariant. 
And we know that $\mu_X$ is $K$-invariant (by definition). So $\mu_X$ is invariant under $KH$. Thus, letting $\Gamma_0 \coloneqq \Gamma \cap KH$, we know that $\mu_X$ is invariant under~$\Gamma_0$. For every $\gamma \in \Gamma$, this implies that $\dleaf_\gamma = 1$ $\mu_X$-a.e.\ \csee{RadonNikodym}, so $\gamma$ commutes with the $\RR$-flow. Therefore, $\Gamma_0$ acts by translations on each $\RR$-orbit in~$Z$. Since the group of translations is abelian, this implies that the commutator subgroup $[\Gamma_0, \Gamma_0]$ acts trivially on~$Z$. On the other hand, $\Gamma$ acts faithfully on~$Z$. So we conclude that $\Gamma_0$ is abelian.

Let $\pi \colon G \to G_1$ be the projection with kernel~$H$.
Since $\Gamma$ is irreducible, we know that $\pi(\Gamma)$ is dense in~$G_1$. Since $K$ is open, this implies that $\pi(\Gamma_0)$ is dense in $\pi(K)$, which is an open subgroup of~$G_1$. Since nontrivial open subgroups are Zariski dense, we conclude that $\pi(\Gamma_0)$ is Zariski dense in~$G_1$. Since $G_1$ is not abelian, this implies that $\pi(\Gamma_0)$ is not abelian, which contradicts the conclusion of the preceding paragraph.
\end{proof}

\begin{rem}
For real groups, \cref{GeneralPropagatingPf-hard} of the proof of \cref{GeneralPropagatingPf} is much longer \cite[\S9, pp.~55--64]{DeroinHurtado}. This is because $K$ is not open, so the intersection $\Gamma \cap K H$ may be abelian, or could even be trivial, so a different idea is needed.
\end{rem}

\subsection{More proof of \texorpdfstring{\cref{keyprop}}{the Key Proposition}} \label{globalSect}

\begin{prop}[proof of \cref{EveryActContractsOverview}, {\cite[Prop.~6.3]{DeroinHurtado}}] \label{EveryActContracts}
For every action of\/~$\Gamma$ on~$\RR$ without fixed points, there is a compact subset~$I$ of\/~$\RR$, such that for every $\epsilon > 0$ and all $x,y \in \RR$, there exists $\gamma \in \Gamma$, such that
	\[ \text{$|\gamma x - \gamma y| < \epsilon$ \ and \ $\{\gamma x, \gamma y\} \subset I$}  . \]
\end{prop}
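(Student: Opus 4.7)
The plan is to combine two ingredients: the absence of a $\Gamma$-invariant Borel probability measure on~$\RR$, together with the recurrence furnished by the compact almost-periodic space attached to the given action.

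First, I would reduce the setup. By the construction underlying \cref{AlmPerSpacePf}, the given action may be replaced by a topologically conjugate one $\varphi \colon \Gamma \to \Homeo^+(\RR)$ that is Lipschitz and has bounded displacement on some finite generating set $S \subset \Gamma$; the desired conclusion is invariant under such conjugation, since pushing $I$ forward by the conjugating homeomorphism yields another compact set. Next, I would show that $\Gamma$ preserves no Borel probability measure on~$\RR$: if $\mu$ were such a measure, its cumulative distribution function $F(x) = \mu\bigl((-\infty,x]\bigr)$ would be $\Gamma$-invariant (since $\Gamma$ is orientation-preserving). A non-constant $\Gamma$-invariant monotone function on~$\RR$ forces global fixed points at its jump points or at endpoints of maximal constancy intervals; in the borderline case where $F$ is continuous and strictly increasing, $F$ would conjugate the action to one preserving Lebesgue measure on an interval, which is necessarily trivial. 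Either way we contradict the no-fixed-point hypothesis.

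The heart of the argument is then to use the almost-periodic space~$Z$ constructed for~$\varphi$ (via \cref{AlmPerSpacePf}) to produce the compression and the recurrence simultaneously. Embedding each $u \in \RR$ as the translate $u * \varphi \in Z$ places the given $x, y$ inside a single $\RR$-orbit of the compact space~$Z$. Applied to this~$Z$, the argument of \fullcref{NotInvt}{Gamma} shows that the stationary measure on~$Z$ is not $\Gamma$-invariant (otherwise $\Gamma$ would act by translations on the $\RR$-orbits, contradicting the finite abelianization of~$\Gamma$ from \fullcref{SSLattFacts}{abel}). I would then argue that this non-invariance, combined with compactness of~$Z$, forces the diagonal $\Gamma$-orbit of the pair $(x * \varphi, y * \varphi)$ in $Z \times Z$ to accumulate on the diagonal of some compact $\RR$-fibre. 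Extracting a subsequence $\gamma_n$ realizing such an accumulation would yield both $|\gamma_n x - \gamma_n y| \to 0$ and containment of $\gamma_n x, \gamma_n y$ in a fixed compact interval $I \subset \RR$ read off from the chosen fibre.

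The main obstacle I anticipate is ensuring that a single $\gamma$ realizes both the compression and the recurrence, rather than having them along two separate subsequences: proximality alone gives compression but could send the pair off to infinity, while recurrence alone keeps the pair bounded but need not force it to be close. Overcoming this requires staying inside the compact space~$Z$ throughout, since $\RR$-displacements within a single $\RR$-orbit of~$Z$ correspond exactly to displacements in~$\RR$, so that any accumulation on a diagonal in $Z \times Z$ translates directly into the simultaneous compression-and-recurrence statement we want.
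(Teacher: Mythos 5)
There is a genuine gap at the heart of your argument. The step ``this non-invariance, combined with compactness of~$Z$, forces the diagonal $\Gamma$-orbit of the pair $(x*\varphi, y*\varphi)$ to accumulate on the diagonal of some compact $\RR$-fibre'' is exactly the statement to be proved, and it is not a consequence of the two ingredients you cite. Non-existence of an invariant probability measure on~$\RR$ (or non-invariance of the stationary measure on~$Z$) does \emph{not} imply the contraction property for a general fixed-point-free action: if the action is semiconjugate to a group of translations, or if it commutes with a fixed-point-free homeomorphism (i.e.\ is the lift of a circle action --- think of a lifted irrational rotation, which preserves all distances), then there is no invariant probability measure and no fixed point, yet no compression whatsoever is possible. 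These are precisely the degenerate cases that must be excluded using properties specific to~$\Gamma$, and your proposal never addresses the second one. The paper's proof runs through the classification of Deroin--Kleptsyn--Navas--Parwani \cite[Thm.~7.1]{DeroinEtAl-Symmetric}: every orientation-preserving action of a finitely generated group on~$\RR$ either has a fixed point, or admits a nontrivial homomorphism to~$\RR$, or commutes with a fixed-point-free homeomorphism, or has the desired contraction property. The first alternative is excluded by hypothesis, the second by finite abelianization \fullcsee{SSLattFacts}{abel}, and the third --- the case your sketch cannot handle --- is excluded by passing to the induced circle action and invoking the finite-orbit theorem \fullcsee{SSLattFacts}{CircleAction}, again together with finite abelianization. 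Without some substitute for that classification theorem and for the circle-action input, your ``accumulation on the diagonal'' claim is unsupported; moreover, trying to extract it from stationary-measure contraction would be circular in the paper's logical order, since the random-walk contraction statements (\cref{RandomContractOverview}, \cref{RandomContract}) are themselves deduced from \cref{EveryActContracts}.

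Two smaller points. Your preliminary reduction replaces the given action by the one produced in the construction of~$Z$, but that construction (via \cite[Props.~8.1 and~8.4]{DeroinEtAl-Symmetric}) yields \emph{some} well-behaved action of~$\Gamma$, related to the original one by semiconjugacy at best, not by a conjugacy; since the proposition is asserted for \emph{every} fixed-point-free action, this transfer needs justification (a semiconjugacy can collapse intervals, so the contraction property does not pull back for free). Your argument that an invariant probability measure forces a fixed point is essentially fine (the median of the measure is fixed), but, as explained above, that fact alone is far too weak to yield the conclusion.
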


\begin{proof}
It is a general fact about orientation-preserving actions of finitely generated groups on the real line that one of the following possibilities must hold \cite[Thm.~7.1]{DeroinEtAl-Symmetric}:
	\begin{enumerate}
	\item \label{EveryActContracts-fp}
	the action has a fixed point, or
	\item \label{EveryActContracts-abel}
	there is a nontrivial homomorphism $\Gamma \to \RR$, or
	\item \label{EveryActContracts-S1}
	there is a fixed-point-free homeomorphism~$\varphi$ of~$\RR$ that commutes with every element of~$\Gamma$, or
	\item \label{EveryActContracts-good}
	the action has the desired contraction property.
	\end{enumerate}
Since we are assuming that~\pref{EveryActContracts-fp} does not hold, and \fullcref{SSLattFacts}{abel} tells us that~\pref{EveryActContracts-abel} does not hold, we may assume that~\pref{EveryActContracts-S1} holds.

We may assume that $\varphi$ is the translation $\varphi(t) = t + 1$.
Then $\Gamma$ acts on $\RR / \ZZ = S^1$. By \fullcref{SSLattFacts}{CircleAction}, this action has a finite orbit, so, by passing to a finite-index subgroup of~$G$, we may assume that it has a fixed point. Assuming, without loss of generality, that the fixed point is $0 + \ZZ$, this implies that the set $\ZZ$ is $\Gamma$-invariant. Now, since the action of~$\Gamma$ is orientation-preserving, the action on~$\ZZ$ must be by translations. However, the group of translations is isomorphic to the abelian group~$\ZZ$, whereas the abelianization of~$\Gamma$ is finite (see \fullcref{SSLattFacts}{abel} again). Hence, every point in~$\ZZ$ is fixed by~$\Gamma$, which contradicts an assumption of this \lcnamecref{EveryActContracts}.
\end{proof}

\begin{prop}[proof of \cref{RandomContractOverview}, {\cite[Props.~6.4 and~6.8]{DeroinHurtado}}] \label{RandomContract}
There is a constant $\delta < 1$, such that, for $\mu_X$-a.e.~$x \in X$, for all $y \in \RR x$, and for $\mu_G^\infty$-a.e.\ sequence $(g_i)_{i=1}^\infty$ of elements of~$G$, we have
	\[ \text{$|g_n g_{n-1} \cdots g_1 x \ - \ g_n g_{n-1} \cdots g_1 y| < \delta^n$ \quad for all sufficiently large~$n$} . \]
\end{prop}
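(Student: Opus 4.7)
The strategy is twofold: first, establish an exponential rate of contraction for the derivative cocycle $D^\leaf_{g_n \cdots g_1}$ along the $\RR$-orbits (a random-walk Lyapunov estimate), then promote this from local to global contraction by invoking \cref{EveryActContractsOverview}.

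For the Lyapunov step, I would consider the skew product
\[
T \colon G^\NN \times X \to G^\NN \times X, \qquad \bigl((g_1,g_2,\ldots),x\bigr) \mapsto \bigl((g_2,g_3,\ldots),\, g_1 x\bigr),
\]
which preserves $\mu_G^\NN \times \mu_X$ precisely because $\mu_X$ is $\mu_G$-stationary (\cref{muXStationaryStated}). Apply the pointwise ergodic theorem to the bounded function $(g,x)\mapsto \log D^\leaf_{g_1}(x)$; combined with the chain rule, this yields, for a.e.\ $(g,x)$,
\[
\tfrac{1}{n}\log D^\leaf_{g_n\cdots g_1}(x) \;=\; \tfrac{1}{n}\sum_{i=1}^n \log D^\leaf_{g_i}(g_{i-1}\cdots g_1 x) \;\longrightarrow\; \lambda,
\]
where $\lambda$ is the integral of $\log D^\leaf_g$ over $\mu_G \times \mu_X$. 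By the Jensen computation \eqref{Liap}, $\lambda<0$ (if $T$ is not ergodic, use ergodic decomposition, noting that Jensen's inequality is strict on each component because $D^\leaf_g \not\equiv 1$ by \fullcref{NotInvt}{G}). Pick any $\delta \in (e^\lambda,1)$. A telescoping-sum argument mirroring the proof of \cref{chiPLocal}---with the single element $a$ replaced by the random product $g_n\cdots g_1$ and $\chi_P(a)$ replaced by the Birkhoff limit above---then furnishes the \emph{local} statement: for a.e.\ $(g,x)$ there exists $\epsilon(g,x)>0$ such that $|g_n\cdots g_1(x+s)-g_n\cdots g_1 x|<\delta^n$ for all large $n$ and all $|s|\le\epsilon(g,x)$.

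To globalize, fix $y\in x+\RR$ and track the displacement $t_n = g_n\cdots g_1 y - g_n\cdots g_1 x$ along the orbit through $g_n\cdots g_1 x$. It suffices to show that almost surely there is a random time $N$ at which $|t_N|<\epsilon(\sigma^N g,\, g_N\cdots g_1 x)$, for then the local statement applied to the tail $\sigma^N g$ starting from $g_N\cdots g_1 x$ completes the proof at rate $\delta^n$ (the constant $\delta$ is universal, only $N$ and the $o(1)$ error depend on the data). This is where \cref{EveryActContractsOverview} enters: it provides a compact $I\subset\RR$ such that for every $\eta>0$ some $\gamma\in\Gamma$ maps any chosen pair of points into $I$ within distance $\eta$. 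Because $G=K\Gamma$ \csee{G=KGamma} and each element of $G$ acts on a fibre via its $\Gamma$-component \fullcsee{Xbundle}{OnLeaf}, one can translate this into a statement about $G$; since the support of $\mu_\Gamma$ is finite and generates $\Gamma$, any fixed word in that support is realized by a block of the random walk with positive probability, so an independent-blocks Borel--Cantelli argument ensures that the desired contracting element appears almost surely.

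The main obstacle will be this last globalization step, specifically arranging the following delicate conjunction: the random walk must almost surely produce a \emph{specific} $\Gamma$-element that brings the pair $(x,y)$ close, at a time $N$ where the Lyapunov-type good behavior from the first part is already in force at the new base point $g_N\cdots g_1 x$. Making the two a.s.\ statements compatible---so that the random $N$ is finite and the subsequent local $\delta^n$-rate is not degraded by the cost of the preparatory block---will require matching the generic sets from Birkhoff with the recurrence/hitting events from Borel--Cantelli. The potential non-ergodicity of $T$ is a secondary nuisance, handled by ergodic decomposition as noted above.
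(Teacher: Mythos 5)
Your first (Lyapunov) step matches the paper's rate argument in spirit, but your globalization step --- which you yourself flag as the main obstacle --- is a genuine gap, and it is precisely the point where the paper uses an ingredient missing from your sketch. Writing $\pi_n = g_n g_{n-1}\cdots g_1$, you need, almost surely, a random time $N$ at which the displacement $t_N = \pi_N y - \pi_N x$ drops below the random threshold $\epsilon(\,\text{tail of } (g_i),\, \pi_N x)$ coming from your local statement, and you propose to produce such times from \cref{EveryActContractsOverview} plus an independent-blocks Borel--Cantelli argument. As stated this does not close, for two reasons. First, the element $\gamma$ supplied by \cref{EveryActContractsOverview} depends on the current pair $(\pi_n x, \pi_n y)$, and its word length is unbounded unless $|t_n|$ (and the location of the pair in the leaf) stays in a bounded range; so the fact that any \emph{fixed} word in the support of $\mu_\Gamma$ appears in a block with positive probability does not yield a per-block success probability bounded below uniformly over histories, which is exactly what a Borel--Cantelli or renewal argument requires. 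Second, even when the pair has been brought close, you must beat the threshold $\epsilon$ evaluated at the shifted sequence and the new base point, a quantity with no a priori lower bound; one needs a positive-measure set on which $\epsilon \ge \epsilon_0$ together with a joint recurrence statement, and you offer no mechanism for this.

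The paper resolves the displacement-control problem before any hitting argument: by the mean-zero displacement property \pref{MeanDisplacement} (built into the construction of $Z$ and $\mu_\Gamma$, and transferred to $\mu_G$ via $\mu_G = m_K\times\mu_\Gamma$), the interval length $\pi_n(x+t)-\pi_n(x-t)$ is a nonnegative martingale, hence converges a.s.\ to a finite limit; in particular it is bounded along a.e.\ trajectory. Only then does compactness of $X$ give, for a fixed $\ell$, uniform $N$ and $p>0$ such that from any $x$ the walk compresses $[x-\ell,x+\ell]$ below $\epsilon$ within $N$ steps with probability at least $p$, and the renewal estimate $c_- \ge p + (1-p)c_-$ shows such compressions occur infinitely often a.s.; combined with the martingale convergence this forces the limit to be $0$. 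The exponential rate $\delta^n$ is obtained afterwards by the \pref{Liap}/Jensen argument in the style of \cref{chiPLocal}, which is essentially your first step. That first step is fine in outline, though your treatment of non-ergodicity is too quick: strict negativity of the leafwise exponent on each ergodic stationary component does not follow just from $\dleaf_g \not\equiv 1$ $\mu_X$-a.e., since ergodic components need not be absolutely continuous with respect to $\mu_X$ and the Radon--Nikodym identity of \cref{RadonNikodym} is stated for $\mu_X$ itself. To repair your write-up, either import the martingale convergence step (or some substitute that keeps $|t_n|$ bounded) before running the hitting-time argument, or follow the paper's order: qualitative global contraction first, exponential rate second.
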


\begin{proof}
For convenience, let $\pi_n = g_n g_{n-1} \cdots g_1$. For any $\ell > 0$ and $x \in X$, it follows from \cref{EveryActContracts} that if $n$ is sufficiently large, then there is a positive probability that $\pi_n(x + \ell) - \pi_n x$ is small. Then an argument similar to the proof of \cref{chiPLocal} shows that this difference almost surely tends to~$0$. Hence, we see from the compactness of~$X$ that, for every $\ell, \epsilon > 0$, there exist $p > 0$ and $N \in \NN$, such that, for every $x \in X$, we have
	\[ \mu_G^\infty \bigl( \{\, (g_i)_{i=1}^\infty \mid  \pi_N(x +\ell) - \pi_N x \le \epsilon\bigr\} \bigr) \ge p . \]
For $x \in X$, let
	\[ C_x = \{\, g \in G \mid g(x + \ell) - g(x - \ell) < \epsilon \,\} , \]
so $\mu_G^{*N}(C_x) \ge p$ (where $\mu_G^{*N}$ is the $N$-fold convolution $\mu_G * \mu_G * \cdots * \mu_G$).

We claim for each $x \in X$ and $\mu_G^\infty$-a.e.\ sequence $(g_i)_{i=1}^\infty$ of elements of~$G$, there are infinitely many~$n$, such that $\pi_{n + N} \pi_n^{-1} \in C_{\pi_n x}$. To see this, let $c(x)$ be the probability that this happens at least once. Note that:
	\begin{itemize}
	\item if $\pi_N \in C_x$, then we may let $n = 0$,
	and
	\item otherwise, we may ignore $n = 0,1,2,\ldots, N-1$, and start our search at $n = N$.
	\end{itemize}
Therefore
	\[ c(x) \ge \mu_G^{*N}(C_x) + \int_{G \smallsetminus C_x} c (gx) \, d \mu_G^{*N}(g) . \]
Therefore, the infinum $c_-$ of $c(x)$ satisfies
	\[ c_- \ge p + (1 - p) c_- = c_- + (1 - c_-) p , \]
so $c_- = 1$. Hence, we have $c(x) = 1$ for all~$x$. Then, since the random walk is a Markov process, we conclude that the event happens an infinite number of times.

Now, the Martingale Convergence Theorem tells us that for every $x \in X$, every $t \in \RR$, and $\mu_G^\infty$-a.e.\ $(g_i)_{i=1}^\infty$, the sequence $\pi_n(x + t) - \pi_n(x - t)$ converges to a finite limit. For $\ell \in \RR^+$, let $\mathcal{E}_\ell = \mathcal{E}_\ell(x,t)$ be the set of sequences for which this limit is strictly less than~$\ell$. Then, for $n$ sufficiently large, the length of the interval is less than~$\ell$. Then, by the claim of the preceding paragraph, we know (almost surely) that there are infinitely many~$n$, such that $\pi_{g_N g_{N-1} \cdots g_{n+1}} \in C_{\pi_n x}$. This implies 
	\[ \pi_N(x + t) - \pi_N(x - t) < \epsilon . \]
Hence, since we know that the sequence $\pi_n(x + t) - \pi_n(x - t)$ converges, we conclude that it converges to~$0$.

An argument similar to the proof of \cref{chiPLocal} (but using~\pref{Liap} in place of the assumption that $\chi_P(a) < 0$) shows that the contraction is exponentially fast \cite[Lem.~6.7 and Prop.~6.8]{DeroinHurtado}.
\end{proof}

As mentioned in \cref{OverviewSect}, the next part of the argument is easier for real groups, so we will switch to that setting for the remainder of the \lcnamecref{globalSect}:
	\[ \text{\emph{Assume, contrary to \cref{pGGamma}, that $G$ is a connected real semisimple Lie group.}} \]

\begin{rem}[a technical point in the proof of \cref{ExistsContractingOverview}] \label{ExistsContractingDetails}
For convenience, let $a = \widehat{a_P}$. Also let 
	\[ X_0 = \{\, x \in X \mid \text{for all $u \in x + \RR$, we have $\widehat{a_P}^n \, x \  - \ \widehat{a_P}^n \, y \  \tends \ 0 \,\}$} .\]
The ``Idea of proof'' of \cref{ExistsContractingOverview} established that $\mu_X(X_0) = 1$, and we wish to show that $\mu_X^P(X_0) = 1$.

The measure $\mu_X$ is $K$-invariant, so $k_*\mu_X(X_0) = 1$ for all $k \in K$. By Fubini's Theorem, we conclude, for $\mu_X$-a.e.\ $x \in X$, that we have $kx \in X_0$ for $m_K$-a.e.\ $k \in K$. Since $\mu_X = \int_K k_* \mu_X^P \, dk$, we can change $\mu_X$ to~$\mu_X^P$:
	\begin{align} \label{ExistsContractingDetails-Kinvt}
	 \text{for $\mu_X^P$-a.e.\ $x \in X$, for $m_K$-a.e.\ $k \in K$, we have $kx \in X_0$}
	 . \end{align}

Fix $\mu_X^P$-a.e.\ $x \in X$. Also fix a neighbourhood $U$ of~$\1$ in~$U^+_a$, and let $\open^-$ be a small neighbourhood of~$\1$ in the set
	\[ \{\, g \in G \mid \text{$\{a^n g a^{-n}$ is bounded} \,\} . \]
We see from~\pref{ExistsContractingDetails-Kinvt} that for $m_{U_a^+}$-a.e.\ $u \in U$, we can choose some $g \in \open^-$, such that $gu x \in X_0$. Letting $g_n \coloneqq a^n g a^{-n}$, we have
	\[ | g_n a^n u x - g_n a^n uy| = | a^n gu x - a^n guy| \tends 0 .\]
Since $\{g_n\}$ is bounded, this implies 
	\[ |a^n u x - a^n uy| \tends 0 . \]
So $ux \in X_0$. This (and Fubini's Theorem) establishes that $u_* \mu_X^P(X_0) = 1$ for a.e.\ $u \in U$. However, $\mu_X^P$ is $U$-invariant (since $U \subseteq U^+_a \subseteq P$). So we conclude that $\mu_X^P(X_0) = 1$.
\end{rem}

We can also show that  $\chi_P$ is nontrivial. More precisely:

\begin{prop}[proof of \cref{NotVacuous}, {\cite[Lem.~7.4]{DeroinHurtado}}] \label{ChiNontriv}
For $\widehat{a_P}$ as in \cref{trackingA}, we have $\chi_P(\widehat{a_P}) < 0$ and $U_{\widehat{a_P}}^+ \subseteq P$.
\end{prop}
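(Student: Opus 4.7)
The inclusion $U^+_{\widehat{a_P}} \subseteq P$ is immediate: by \cref{trackingA}, $\widehat{a_P}$ lies in the closed Weyl chamber $\weyl_P$, which (as noted in the remark after \cref{C_G(a)}) is exactly the set of $a \in A$ with $U^+_a \subseteq P$. So the real work is proving $\chi_P(\widehat{a_P}) < 0$. The plan is to identify $\chi_P(\widehat{a_P})$ with the exponential rate of contraction of the $\mu_G$-random walk on $\RR$-orbits in $(X, \mu_X)$, and then observe that this rate is strictly negative by Jensen's inequality.

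To see that the rate is negative: by Jensen's inequality applied as in \pref{Liap},
\[ L \coloneqq -\int_G \int_X \log D^\leaf_g(x) \, d\mu_X(x) \, d\mu_G(g) > 0. \]
By the random-walk analogue of the Pointwise Ergodic Theorem~\pref{PointwiseErgThm} applied to the stationary measure $\mu_X$ and the cocycle $\log D^\leaf$ (the tool cited from \cite[Cor.~4.8]{DeroinHurtado} in the proof of \cref{muXPErgodic}), for $\mu_X \times \mu_G^\infty$-a.e.\ $(x, (g_i))$ we have
\[ \frac{1}{n} \log D^\leaf_{g_n \cdots g_1}(x) \tends -L. \]

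To identify this rate with $\chi_P(\widehat{a_P})$, apply \cref{trackingA} to write $g_n \cdots g_1 = \xi_n \, \widehat{a_P}^n \, k^{-1}$ almost surely, with $k \in K$ uniformly distributed and $\ell_O(\xi_n) = o(n)$. Elements of $\open$ act by uniformly Lipschitz maps on each $\RR$-orbit (\cref{LipOnLeaves}), so $|\log D^\leaf_g| \le C \, \ell_O(g)$ for some constant $C$, giving $|\log D^\leaf_{\xi_n}| = o(n)$. Since $D^\leaf_{k^{-1}} = 1$ for $k \in K$, the Chain Rule yields
\[ \log D^\leaf_{g_n \cdots g_1}(x) = \log D^\leaf_{\widehat{a_P}^n}(k^{-1} x) + o(n). \]
Using $K$-invariance of $\mu_X$ and the uniform distribution of $k$, for $\mu_X$-a.e.\ $y$ we conclude
\[ \frac{1}{n} \log D^\leaf_{\widehat{a_P}^n}(y) \tends -L, \]
and hence, by dominated convergence (using $\bigl|\tfrac{1}{n}\log D^\leaf_{\widehat{a_P}^n}\bigr| \le \|\log D^\leaf_{\widehat{a_P}}\|_\infty$), that $\frac{1}{n}\int_X \log D^\leaf_{\widehat{a_P}^n}\,d\mu_X \tends -L$.

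The final step is to match this limit with $\chi_P(\widehat{a_P})$. Using the decomposition $\mu_X = \int_K k_*\mu_X^P\, dm_K(k)$, the chain-rule identity $\log D^\leaf_{\widehat{a_P}^n}(kx) = \log D^\leaf_{\widehat{a_P}^n k \widehat{a_P}^{-n}}(\widehat{a_P}^n x) + \log D^\leaf_{\widehat{a_P}^n}(x)$, the $A$-invariance of $\mu_X^P$, and \cref{chiPHomo}, one obtains
\[ -L = \chi_P(\widehat{a_P}) + \lim_{n \to \infty} \frac{1}{n} \int_K \int_X \log D^\leaf_{\widehat{a_P}^n k \widehat{a_P}^{-n}}(y) \, d\mu_X^P(y) \, dm_K(k). \]
The main obstacle is showing that this residual ``conjugation cocycle'' term vanishes. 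The expected mechanism is to exploit the symmetry of $\mu_G$ (hence of $m_K$) under $k \mapsto k^{-1}$ together with the identity $\log D^\leaf_{g^{-1}}(y) = -\log D^\leaf_g(g^{-1} y)$, combined with the $A$-invariance of $\mu_X^P$, to produce a cancellation that forces $\chi_P(\widehat{a_P}) = -L < 0$. This fits the general principle that the Lyapunov exponent of a $\mu_G$-random walk is governed by the Cartan projection of its Karlsson--Margulis drift.
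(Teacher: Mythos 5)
Your first two-thirds track the paper's argument: the inclusion $U^+_{\widehat{a_P}} \subseteq P$ because $\widehat{a_P} \in \weyl_P$, the negativity of the drift $-L$ via Jensen's inequality \pref{Liap}, the random-walk ergodic theorem, and Karlsson--Margulis tracking (\cref{trackingA}) to transfer the negative exponent from the random products $g_n\cdots g_1$ to the deterministic sequence $\widehat{a_P}^{\,n}$ (your handling of the twist by~$k$, using $\dleaf_k = 1$ and the $K$-invariance of $\mu_X$, is exactly the ``replace $x$ by $kx$'' device the paper alludes to). The divergence, and the gap, is in the last step: the paper identifies the pointwise limit of $\tfrac1n\log \dleaf_{\widehat{a_P}^{\,n}}$ with $\chi_P(\widehat{a_P})$ by applying the Pointwise Ergodic Theorem~\pref{PointwiseErgThm} to the action of $\widehat{a_P}$ on the ergodic components of $\mu_X^P$ (using that $\int \log \dleaf_{\widehat{a_P}}$ is the same on every component, as in the proof of \cref{chiPLocal}), and then compares this $\mu_X^P$-a.e.\ limit with the $\mu_X$-a.e.\ limit $-L$. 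You instead integrate against $\mu_X$, decompose $\mu_X = \int_K k_*\mu_X^P\,dm_K$ via \cref{FurstenburgMuP}, and arrive at
\[ -L \;=\; \chi_P(\widehat{a_P}) \;+\; \lim_{n\to\infty}\frac1n\int_K\int_X \log \dleaf_{\widehat{a_P}^{\,n} k\, \widehat{a_P}^{-n}}(y)\,d\mu_X^P(y)\,dm_K(k), \]
leaving the vanishing of the residual term as an ``expected mechanism.'' That residual is not a technicality: for $m_K$-typical $k$ the conjugates $\widehat{a_P}^{\,n} k \widehat{a_P}^{-n}$ leave every compact set with word length growing linearly in~$n$, so the integrand is a priori of size $O(n)$ and the normalized term is $O(1)$; showing it tends to~$0$ encodes precisely how the $K$-average $\mu_X$ and the $P$-invariant measure $\mu_X^P$ behave under the $\widehat{a_P}^{\,n}$-flow, i.e.\ it is essentially the content of the proposition itself, so as written the argument is incomplete at its crux.

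Moreover, the cancellation you suggest does not go through as stated. Pairing $k$ with $k^{-1}$ and using $\log \dleaf_{g^{-1}}(y) = -\log \dleaf_g(g^{-1}y)$ shifts the base point by $(\widehat{a_P}^{\,n} k \widehat{a_P}^{-n})^{-1}$, so the two contributions would only cancel if $\mu_X^P$ were (asymptotically) invariant under these conjugates; but $\mu_X^P$ is not even $K$-invariant \fullcsee{NotInvt}{P}, and nothing in the setup gives invariance under $\widehat{a_P}^{\,n} k \widehat{a_P}^{-n}$. The fix is to abandon the $K$-averaged identity and argue pointwise as the paper does: obtain $\tfrac1n \log \dleaf_{\widehat{a_P}^{\,n}}(x) \to \chi_P(\widehat{a_P})$ for $\mu_X^P$-a.e.\ $x$ by the Birkhoff theorem on $a$-ergodic components of $\mu_X^P$, transfer the $\mu_X$-a.e.\ statement $\tfrac1n \log \dleaf_{\widehat{a_P}^{\,n}} \to -L$ to $\mu_X^P$-a.e.\ points using the $U^+_{\widehat{a_P}}$-invariance of $\mu_X^P$ (as in \cref{ExistsContractingDetails}) or the $kx$-trick, and then equate the two limits to conclude $\chi_P(\widehat{a_P}) = -L < 0$.
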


\begin{proof}[Idea of proof]
Since $\widehat{a_P} \in \weyl_P$, we know that $U_{\widehat{a_P}}^+ \subseteq P$. Therefore, we only need to prove the other statement.

The Pointwise Ergodic Theorem~\pref{PointwiseErgThm} has an analogue for random walks with a stationary measure \cite[Cor.~4.8]{DeroinHurtado}. (This was also mentioned in the proof of \cref{muXPErgodic}.) Applying this (and the Chain Rule) to the function $\log \dleaf_g(x)$ on $G \times X$ yields: for $\mu_G^\infty$-a.e.\ sequence $(g_i)_{i=1}^\infty$ of elements of~$G$, and $\mu_X$-a.e.\ $x \in X$, if we let $\pi_n = g_n g_{n-1} \cdots g_1$, then
	\begin{align} \label{ChiNontrivPf-g}
	 \frac{1}{n} \log \dleaf_{\pi_n}(x) 
	=  \frac{1}{n} \sum_{i =0}^{n-1} \log \dleaf_{g_{i+1}} (\pi_i x) 
	\tends \int_G \int_X \log\dleaf_g \, d\mu_X(x) \, d\mu_G(g)
	< 0
	, \end{align}
where the final inequality is from~\pref{Liap}.

For convenience, let $a = \widehat{a_P}$. 
As in the proof of \cref{C_G(a)}, there is a technical issue that the measure $\mu_X^P$ might not be ergodic for the action of~$a$, even though \cref{muXPErgodic} tells us that it is ergodic for the action of~$P$. An additional technical issue is that the above inequality is valid for $\mu_X$-a.e.~$x$, but the definition of $\chi_P(\widehat{a_P})$ uses the measure~$\mu_x^P$. We will ignore both of these issues. (Roughly speaking, they are handled by proving that statements we prove for~$x$ remain valid for~$kx$, for a.e.\ $k \in K$.)

Assuming that $\mu_X^P$ is ergodic with respect to~$a$, we have, for $\mu_X^P$-a.e.\ $x \in X$:
	\begin{align} \label{ChiNontrivPf-a}
	\frac{1}{n} \log \dleaf_{a^n}(x) 
	&= \frac{1}{n}  \sum_{k=0}^{n-1} \log \dleaf_a(a^k x) 
		&& \text{(Chain Rule)}
	\\&\tends \int_X \log \dleaf_a \, d\mu_X^P                \notag
		&& \text{(Pointwise Ergodic Theorem~\pref{PointwiseErgThm})}
	\\&= \chi_P(a)                \notag
		&& \text{(definition of~$\chi_P$)}
	. \end{align}
Also, it follows from \cref{trackingA} that 
	\[ \frac{\log \dleaf_{a^n}(x) - \log \dleaf_{\pi_n}(x)}{n} \tends 0 .\]
Therefore, if we ignore the technical issue that~\pref{ChiNontrivPf-a} is for $\mu_X^P$-a.e.~$x$, but~\pref{ChiNontrivPf-g} is for $\mu_X$-a.e.~$x$, we can equate the two (in the limit), and conclude that $\chi_P(a) < 0$.
\end{proof}

\section{\texorpdfstring{$K\Gamma$}{KΓ} might not be all of~\texorpdfstring{$G$}{G}} \label{FiniteSetSect}

We have been assuming that $G = K \Gamma$ \csee{G=KGamma}. This makes it easy to extend any function~$f$ on~$\Gamma$ to a function~$f_G$ on~$G$, simply by making it left $K$-invariant: $f_G(k \gamma) = f(\gamma)$. Without this assumption, there is usually no canonical way to extend a function on~$\Gamma$ to a function on~$G$. However, we can do this for ``harmonic'' functions.

\begin{defn}[Furstenberg {\cite[Defn.~4.1]{Furstenberg-Poisson}}]
Assume $\mu_H$ is a probability measure on a topological group~$H$, 
	and
	 $f \colon H \to \RR$.
We say that $f$ is \emph{$\mu_H$-harmonic} if, for all $x \in H$, we have
	\[ f(x) = \int_H f(h x) \, d\mu_H(h) . \]
That is, the value of~$f$ at any point is equal to the average (with respect to~$\mu_H$) of the values of~$f$ at certain other points.
\end{defn}

\begin{lem}[{\cite[p.~363]{Furstenberg-Poisson}}] \label{HarmInvt}
Every $\mu_G$-harmonic function~$f$ on~$G$ is left $K$-invariant. Therefore, $f$ corresponds to a well-defined function~$\overline f$ on $K \backslash G$.
\end{lem}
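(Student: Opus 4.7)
The plan is to derive left $K$-invariance of~$f$ directly from the mean-value identity defining harmonicity, using only the right $K$-invariance of~$\mu_G$ established in \fullref{nuGDefn}{G-Kinvt}. The passage to the quotient function~$\overline f$ is then automatic.

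First, I would fix $k \in K$ and $x \in G$, and apply harmonicity at the point~$kx$ to write
\[ f(kx) = \int_G f(hkx) \, d\mu_G(h) . \]
Setting $\phi(h) \coloneqq f(hx)$, the right-hand side becomes $\int_G \phi(hk) \, d\mu_G(h)$. Since $\mu_G$ is right $K$-invariant, the map $h \mapsto hk$ preserves~$\mu_G$, so
\[ \int_G \phi(hk) \, d\mu_G(h) = \int_G \phi(h) \, d\mu_G(h) = \int_G f(hx) \, d\mu_G(h) . \]
Next, I would apply harmonicity a second time, now at the point~$x$, to identify this last integral with~$f(x)$. Chaining the equalities yields $f(kx) = f(x)$ for all $k \in K$ and $x \in G$, which is exactly left $K$-invariance.

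The final clause of the lemma is then immediate: because $f(kx) = f(x)$, the value~$f(x)$ depends only on the coset~$Kx$, so $\overline f(Kx) \coloneqq f(x)$ is a well-defined function on $K \backslash G$ satisfying $\overline f \circ \pi = f$, where $\pi \colon G \to K \backslash G$ is the natural quotient map. There is essentially no obstacle here: the lemma is a bookkeeping consequence of the interplay between the mean-value identity and the right $K$-invariance of~$\mu_G$. The only subtlety worth flagging is that harmonicity is applied at \emph{two} points, $kx$ and then~$x$; the left $K$-invariance of~$\mu_G$, its symmetry, and its absolute continuity play no role in this particular proof.
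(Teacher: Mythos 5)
Your proof is correct and is essentially identical to the paper's own argument: both apply the harmonicity identity at $kx$, use the right $K$-invariance of $\mu_G$ to replace $f(hkx)$ by $f(hx)$ in the integral, and then apply harmonicity again at $x$ to conclude $f(kx)=f(x)$, after which the descent to $K\backslash G$ is immediate.
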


\begin{proof}
For $k \in K$, we have
	\begin{align*}
	f(k x)
	&= \int_G f(g k x) \, d\mu_G(g) 
		&& \text{($f$ is $\mu_G$-harmonic)}
	\\&= \int_G f(g x) \, d\mu_G(g) 
		&& \text{($\mu_G$ is right $K$-invariant \fullcsee{nuGDefn}{G})}
	\\&= f(x) 
		&& \text{($f$ is $\mu_G$-harmonic)}
	. \qedhere \end{align*}
\end{proof}

\begin{rem} \label{HarmDetByGamma}
Choose some $x_0 \in G$, and construct the random walk $x_0, x_1, x_2,\ldots$ in~$G$, starting at~$x_0$, with law~$\mu_G$. (This means that, for every $n \ge 1$, the distribution of the $n$th~step $x_n x_{n-1}^{-1}$ is given by the measure~$\mu_G$.) 
We define a stopping time
	\[  N = N(x_0, x_1, \ldots) = \min\{\, n \ge 0 \mid x_n \in K \Gamma \,\} . \]

Now, suppose $f \colon G \to \RR$ is $\mu_G$-harmonic. Then the sequence $f(x_0), f(x_1), f(x_2), \ldots$ is a \emph{martingale}. More precisely, $f(x_n)$ is the expected value of~$f(x_{n+1})$, given $x_0, x_1, \ldots, x_n$. So it is not difficult to see that $f(x_0)$ is the expected value of $f(x_N)$ (where $N$ is the above-mentioned stopping time, or any other stopping time that satisfies mild conditions). Since $x_N \in K \Gamma$ and $f$ is left $K$-invariant \csee{HarmInvt}, we can conclude that $f(x_0)$ is determined by the values of~$f$ on~$\Gamma$. Since $x_0$ is arbitrary, this means that the value of~$f$ at every point of the group~$G$ is determined by its values on the subgroup~$\Gamma$. Of course, this argument relies on the assumption that $f$ is $\mu_G$-harmonic.
\end{rem}

Conversely, given an appropriate function on~$\Gamma$, we will explain how to extend it to a $\mu_G$-harmonic function that is defined on all of~$G$.

\begin{defn} \label{RandWalkDefns}
\leavevmode\noprelistbreak
	\begin{enumerate}
	\item For each $x \in K \backslash G$, we define an atomic probability measure~$\omega_x$ on $K \backslash G$ by:
	\[ \text{$\omega_{x}(y) = \mu_G(Khg^{-1})$
	\qquad if $x = Kg$ and $y = Kh$} . \]
This is clearly independent of the choice of the representative~$h$ of~$y$, and the following calculation shows that it is also independent of the choice of the representative~$g$ of~$x$:
	\[ \mu_G \bigl (Kh (kg)^{-1} \bigr)
		=  \mu_G \bigl (Kh g^{-1} k^{-1} \bigr)
		=  \mu_G \bigl (Kh g^{-1} \bigr) , \]
where the final equality follows from our assumption that $\mu_G$ is right $K$-invariant \fullcsee{nuGDefn}{G-Kinvt}
.
	\item We consider the random walk~$\omega = \omega(x_0)$ on $K \backslash G$ that starts at a point~$x_0$, and has transition probability $\omega_x(y)$ from~$x$ to~$y$.
	\item Let $\widehat x = K$ be the basepoint of $K \backslash G$. For $x \in K \backslash G$ and $\gamma \in \Gamma$, let $\mu_x(\gamma)$ be the probability that $\widehat x \gamma$ is the first point of $\widehat x \, \Gamma$ that is encountered by the random walk $\omega(x)$ that starts at~$x$.
	\item Let $\mu_{K \backslash G}$ be the (atomic) measure on $K \backslash G$ that is induced by~$\mu_G$. More precisely, if $\pi \colon G \to K \backslash G$ is the natural quotient map, then $\mu_{K \backslash G} = \pi_* \mu_G$.
	\item \label{RandWalkDefns-muGamma}
	Define a measure~$\mu_\Gamma$ on~$\Gamma$ by:
		\[ \mu_\Gamma(\gamma) = \int_{K \backslash G} \mu_x(\gamma) \, d\mu_{K \backslash G}(x) . \]
	\end{enumerate}
\end{defn}

We will need an estimate of the following type:

\begin{lem} \label{RandWalkTail}
We have $\mu_x(\Gamma) = 1$ for all $x \in K \backslash G$. 

Indeed, if we let $\ell(\gamma)$ be the word length of~$\gamma$, with respect to some finite generating set of\/~$\Gamma$, then there exists $\delta < 1$, such that 
	\[ \sum_{\ell(\gamma) < n} \mu_x(\gamma) > 1 - \delta^n \qquad \text{for all large~$n$.} \]
\end{lem}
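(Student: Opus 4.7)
The plan is to bound the first hitting time of $K\Gamma$ for the walk $\omega(x)$, first in terms of the number of walk steps, and then in terms of word length. Let $T$ denote the first $n \ge 0$ with $x_n \in K\Gamma$, where $x_0 = x = Kg_0$ and $x_n = K g_n' g_{n-1}' \cdots g_1' g_0$ with i.i.d.\ steps $g_i' \sim \mu_G$; then $\mu_x(\gamma) = \mathbb{P}_x(T < \infty, \ x_T = K\gamma)$, so both claims follow from an exponential tail bound on a suitable hitting time.

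The first stage is to project the walk to the \emph{finite} double-coset space $F \coloneqq K\backslash G / \Gamma$ \csee{KGGammaFinite}. The target set $K\Gamma$ projects to a single point $f_0 \in F$, so the hitting time of $K\Gamma$ in $K \backslash G$ equals the hitting time of $f_0$ in~$F$. Because $\mu_G$ is bi-$K$-invariant (\fullref{nuGDefn}{G-Kinvt}) and $K$ is open, $\operatorname{supp} \mu_G$ is a union of open double cosets $KgK$, hence open and nonempty; together with symmetry and the fact that $\operatorname{supp} \mu_G$ generates $G$ \fullcsee{nuGDefn}{G}, this makes $\bigcup_{n \ge 1} \operatorname{supp} \mu_G^{*n}$ an open subgroup of~$G$, which must equal~$G$. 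Since each $\mu_G^{*n}$ is absolutely continuous with respect to Haar measure, it has positive mass on every nonempty open set; so for every $f \in F$ there is some $n_f$ with the projected walk reaching $f_0$ from $f$ in $n_f$ steps with positive probability. Finiteness of $F$ allows us to choose uniform $N$ and $\epsilon > 0$ so that the hitting probability of $f_0$ within $N$ steps is at least $\epsilon$ from every $f \in F$. The strong Markov property then yields
\[ \mathbb{P}_x(T > mN) \le (1-\epsilon)^m, \qquad \text{hence} \qquad \mathbb{P}_x(T > n) \le \delta_0^n \]
for some $\delta_0 < 1$ independent of~$x$. In particular $\mu_x(\Gamma) = \mathbb{P}_x(T < \infty) = 1$.

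The second stage converts walk-step bounds to word-length bounds. If $x_T = K\gamma$ then $\gamma \in K \cdot (\operatorname{supp} \mu_G)^T \cdot g_0$, a compact subset of~$G$ (since $\operatorname{supp} \mu_G$ is compact, as assumed in \fullcref{nuGDefn}{G}). Because $\Gamma$ is cocompact in~$G$ \fullcsee{SSLattFacts}{padicCocpct}, the \v{S}varc-Milnor lemma applies: fixing a finite generating set of $\Gamma$ and any compact subset $B \subseteq G$, there is a constant $C$ so that $\Gamma \cap B^n \subseteq S_\Gamma^{Cn + C}$ for all~$n$. Applying this to $B = K \cdot \operatorname{supp} \mu_G$ (and absorbing the bounded contribution of~$g_0$ into the additive constant) gives $\ell(\gamma) \le CT + C'$ on $\{T < \infty\}$. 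Combining with the first stage,
\[ \sum_{\ell(\gamma) \ge n} \mu_x(\gamma) \ \le \ \mathbb{P}_x\bigl(T \ge (n-C')/C\bigr) \ \le \ \delta_0^{(n-C')/C} \ \le \ \delta^n \]
for some $\delta \in (\delta_0, 1)$ and all sufficiently large~$n$, as required.

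The main obstacle is the second stage: random walk theory naturally controls number of steps, but the lemma asks for control in the word metric on~$\Gamma$. This conversion is exactly where cocompactness of~$\Gamma$ is essential, which is why the argument is clean in the $p$-adic case (by \fullcref{SSLattFacts}{padicCocpct}) but requires more work in the real setting where non-uniform lattices must be accommodated. Uniformity of~$\delta$ in the starting point~$x$ comes from finiteness of $F$ together with the fact that the right $\Gamma$-action on~$G$ commutes with the walk, so shifting~$g_0$ by an element of~$\Gamma$ merely right-translates $\mu_x$ on~$\Gamma$ by that same element, altering word lengths by a bounded amount.
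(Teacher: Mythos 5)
Your argument is correct and follows essentially the same route as the paper's proof: project the walk to the finite double-coset space $K \backslash G / \Gamma$ to get a uniform geometric tail bound on the hitting time of $K\Gamma$, then use cocompactness of~$\Gamma$ (\v{S}varc--Milnor / quasi-isometry of~$\Gamma$ with its orbit in $K\backslash G$) to convert the step-count bound into a word-length bound. Your extra verification that the projected finite chain reaches $\widehat x\,\Gamma$ from every state with positive probability (via openness of the support of~$\mu_G$ and the fact that it generates~$G$) fills in a detail the paper leaves implicit, and is welcome.
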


\begin{proof}
The random walk~$\omega$ on $K \backslash G$ factors through to a random walk~$\overline\omega$ on $K \backslash G / \Gamma$: for $x,y \in K \backslash G$, the transition probability from $x \Gamma$ to $y \Gamma$ is $\sum_{\gamma \in \Gamma} \omega_x(y \gamma)$. This is a random walk (or ``Markov chain'') on a finite set \csee{KGGammaFinite}, so there is some $r \in \NN$ and $\epsilon > 0$, such that, for every starting vertex, the walk has a probability $> \epsilon$ of reaching the point $\widehat x \,\Gamma$ within $r$~steps.
	Therefore, the probability that the stopping time~$N$ is greater than~$n$ is less than $(1 - \epsilon)^{n/r}$.

To complete the proof, combine this observation with the fact that, since $\Gamma$ is cocompact, the word length metric~$\ell_\Gamma$ on~$\Gamma$ is quasi-isometric to the metric on $\widehat x \, \Gamma$ that is obtained by restricting the metric on~$K \backslash G$ \cite[Prop.~I.8.19, p.~140]{BridsonHaefligerBook}.
\end{proof}

\begin{prop}[{\cite[Thm.~3.12]{DeroinHurtado}}] \label{ExtendHarm}
Every positive \textup(or bounded\/\textup) $\mu_\Gamma$-harmonic function on~$\Gamma$ extends to a unique $\mu_G$-harmonic function on~$G$. Specifically, the $\mu_G$-harmonic extension~$f_G$ of~$f$ is defined by:
		\[ \overline{f_G}(x) = \sum_{\gamma \in \Gamma} \mu_x(\gamma) \, f(\gamma) 
		\qquad \text{for $x \in K \backslash G$} . \]
\end{prop}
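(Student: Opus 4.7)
The plan is to interpret $\overline{f_G}(x)$ probabilistically as $E_x[f(\gamma_N)]$, where $\hat x \gamma_N$ is the first point of $\hat x\Gamma$ hit (at time $N$) by the random walk $\omega$ starting at $x$. This recognizes the formula as a Poisson-type kernel for the harmonic-extension problem. For bounded $f$, \cref{RandWalkTail} gives $\mu_x(\Gamma) = 1$, so the series converges absolutely with $|\overline{f_G}(x)| \le \|f\|_\infty$. The extension property $\overline{f_G}(\hat x \gamma_0) = f(\gamma_0)$ is immediate for $\gamma_0 \in \Gamma$, because the walk starting at $\hat x \gamma_0 \in \hat x \Gamma$ hits $\hat x \Gamma$ at time~$0$, making $\mu_{\hat x \gamma_0}$ the Dirac mass at $\gamma_0$.

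The heart of the argument is verifying $\mu_G$-harmonicity of $f_G(g) := \overline{f_G}(Kg)$. Since $f_G$ is left $K$-invariant by construction, the identity $f_G(g) = \int_G f_G(hg)\, d\mu_G(h)$ reduces (via a change of variables grouping $h$ into cosets $Khg^{-1}$) to the discrete identity
\[
\overline{f_G}(x) \;=\; \sum_{y \in K\backslash G} \omega_x(y)\, \overline{f_G}(y) \qquad \text{for all } x \in K\backslash G.
\]
I would split this into two cases. If $x \notin \hat x \Gamma$, first-step analysis (the strong Markov property at time~$1$, using that the walk has not yet hit $\hat x \Gamma$) gives $\mu_x(\gamma) = \sum_y \omega_x(y)\mu_y(\gamma)$, and interchanging the sums delivers the identity. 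If $x = \hat x \gamma_0 \in \hat x \Gamma$, first-step analysis is unavailable (the walk has already hit $\hat x \Gamma$ at time~$0$); instead I use the right-$\Gamma$-equivariance of $\omega$ (immediate from the formula $\omega_{Kg}(Kh) = \mu_G(Khg^{-1})$, which is invariant under $(g,h) \mapsto (g\gamma,h\gamma)$) to obtain $\mu_{y\gamma}(\gamma') = \mu_y(\gamma'\gamma^{-1})$. After the substitution $y = y'\gamma_0$ (which converts $\omega_{\hat x \gamma_0}$ into $\omega_{\hat x} = \mu_{K\backslash G} = \pi_*\mu_G$), the distribution of the first hit of $\hat x \Gamma$ \emph{after} the first step, starting from $\hat x \gamma_0$, becomes $\mu_\Gamma(\cdot\,\gamma_0^{-1})$, so the required identity collapses to
\[
f(\gamma_0) \;=\; \sum_{\gamma \in \Gamma} \mu_\Gamma(\gamma \gamma_0^{-1})\,f(\gamma) \;=\; \sum_{\alpha \in \Gamma} \mu_\Gamma(\alpha)\,f(\alpha\gamma_0),
\]
which is precisely the $\mu_\Gamma$-harmonicity of $f$ at $\gamma_0$.

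Uniqueness follows directly from \cref{HarmDetByGamma}: the difference of two $\mu_G$-harmonic extensions of $f$ is $\mu_G$-harmonic and vanishes on $\Gamma$, hence on all of $K\Gamma$ by \cref{HarmInvt}, and the martingale/optional-stopping argument of \cref{HarmDetByGamma} (whose applicability is ensured by the exponential tail of the hitting time in \cref{RandWalkTail}) forces this difference to vanish identically.

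I expect the main obstacle to be the positive (unbounded) case, where the defining series is a sum of non-negative terms that is a priori only in $[0,\infty]$. Finiteness on $\Gamma$ is automatic from the extension property; the harmonicity identities above make sense in $[0,\infty]$ and propagate this finiteness from $\Gamma$ outward, since any $x \in K\backslash G$ can be connected to $\hat x \Gamma$ by finitely many $\omega$-steps with positive probability, and the right-$\Gamma$-equivariance converts such a backward walk into a finite convex combination of values $f(\gamma)$ controlling $\overline{f_G}(x)$. Once finiteness is established, the first-step analysis and the Fubini interchanges above are unproblematic, and the rest of the argument goes through without change.
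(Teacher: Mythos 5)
Your proposal is correct and takes essentially the same route as the paper: the extension property from $\mu_{\widehat x \gamma_0}$ being a point mass, first-step analysis at points off $\widehat x\,\Gamma$, and at points of $\widehat x\,\Gamma$ the right-$\Gamma$-equivariance of the walk combined with the definition of~$\mu_\Gamma$ and the $\mu_\Gamma$-harmonicity of~$f$, with uniqueness via the martingale argument of \cref{HarmDetByGamma}. (The paper simply assumes $f$ is bounded ``to simplify the estimates,'' so your extra discussion of the positive unbounded case goes slightly beyond what is written there.)
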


\begin{proof}
To simplify the estimates, we assume that $f$ is bounded.
Then it is clear from \cref{RandWalkTail} that the sum defining $\overline{f_G}(x)$ converges, so $f_G$ is a function from~$G$ to~$\RR$.

The uniqueness of the extension is a consequence of the argument in \cref{HarmDetByGamma} (if the extension exists).

For $x = K \gamma \in \widehat x \, \Gamma$, the measure $\mu_x$ is the point mass at~$\gamma$. So it is immediate from the definition of~$\overline{f_G}$ that $\overline{f_G}(K \gamma) = f(\gamma)$. Therefore the formula for~$f_G$ does provide an extension of~$f$.

To complete the proof, we show that $f_G$ is $\mu_G$-harmonic.

For $x_0 \notin \widehat x \, \Gamma$, we see from the definition that 
	\[ \mu_{x_0}(\gamma) = \int_{K \backslash G} \mu_{x_1}(\gamma) \, d \mu_{K \backslash G}(x_1) , \]
so 
	\[\overline{f_G}(x_0)
		= \sum_{\gamma \in \Gamma} \mu_{x_0}(\gamma) \, f(\gamma) 
		= \int_{K \backslash G} \sum_{\gamma \in \Gamma} \mu_{x_1}(\gamma) \, f(\gamma)  \, d \mu_G(x_1)
		= \int_{K \backslash G}  \overline{f_G}(x_1) \, d \mu_G(x_1)
		,\]
so $f_G$ is $\mu_G$-harmonic at the points of~$G$ that are in~$x_0$. 

To see that $f_G$ is also $\mu_G$-harmonic at points of $\widehat x \, \Gamma$, note that
	\begin{align*}
	\overline{f_G}(K \lambda)
	&= f(\lambda)
		&& \text{($f_G$ is an extension of~$f$)}
	\\&= \sum_{\gamma \in \Gamma} \mu_\Gamma(\gamma) \, f(\gamma \lambda)
		&& \text{($f$ is $\mu_\Gamma$-harmonic)}
	\\&= \int_{K \backslash G} \sum_{\gamma \in \Gamma} \mu_x(\gamma) \, f(\gamma \lambda) \, d\mu_{K \backslash G}(x)
		&& \text{(definition of $\mu_\Gamma$)}
	\\&= \int_{K \backslash G} \sum_{\gamma \in \Gamma} \mu_{x\lambda}(\gamma\lambda) \, f(\gamma \lambda) \, d\mu_{K \backslash G}(x)
		&& \begin{pmatrix} \text{definition of~$\mu_x$, and the} \\ \text{random walk~$\omega$ is $G$-invariant} \end{pmatrix}
	\\&= \int_{K \backslash G} \sum_{\gamma \in \Gamma} \mu_{x\lambda}(\gamma) \, f(\gamma) \, d\mu_{K \backslash G}(x)
		&& \text{(change of variables in the sum)}
	\\&= \int_{K \backslash G} \overline{f_G}(x \lambda) \, d\mu_{K \backslash G}(x)
		&& \text{(definition of $\overline{f_G}$)}
	. \qedhere \end{align*}
\end{proof}

Properties of harmonic functions (including \cref{ExtendHarm}) play key roles in the Deroin-Hurtado proof. 

\begin{eg} \label{GeneralmuX}
Harmonic functions are the foundation of the construction of the measure~$\mu_X$ when $G \neq K \Gamma$.
Let $\prob(Z)$ be the space of probability measures on~$Z$, and define $f \colon \Gamma \to \prob(Z)$ by $f(\gamma) = \gamma_*^{-1} \mu_Z$. Then $f$ is $\mu_\Gamma$-harmonic (because $\mu_Z$ is $\mu_\Gamma$-stationary, and $\mu_\Gamma$ is \emph{symmetric}, i.e., $\mu_\Gamma(\gamma^{-1}) = \mu_\Gamma(\gamma)$). So it extends to a unique $\mu_G$-harmonic function $f_G \colon G \to \prob(Z)$. Also note that $f$ is right $\Gamma$-equivariant, in the sense that $f(\gamma \lambda) = \lambda^{-1}_* f(\gamma$), so the uniqueness implies that $f_G$ is also right $\Gamma$-equivariant. Hence, $f_G$ represents a section of the bundle $\bigl( G \times \prob(Z) \bigr)/ \Gamma \to G/\Gamma$. Integrating this section with respect to the $G$-invariant probability measure on $G/\Gamma$ yields a probability measure~$\mu_X$ on $(G \times Z) / \Gamma = X$. Since $f_G$ is $\mu_G$-harmonic (and the measure on $G/\Gamma$ is $G$-invariant), the measure~$\mu_X$ is $\mu_G$-stationary.
\end{eg}

\begin{rem}
Harmonic functions are left $K$-invariant \csee{HarmInvt}, so the section $f_G$ of \cref{GeneralmuX} is constant on~$K$. If $X \simeq K \times Z$ (as stated in \fullcref{Xbundle}{X}), this implies that $\mu_X = m_K \times \mu_Z$, which agrees with \cref{muZmuXDefn}.
\end{rem}

\begin{rem}[cf.\ {\cite[5.3.2, 5.3.4, and 5.10(3)]{DeroinHurtado}}]
Here is a slightly different way to look at the construction of~$\mu_X$.
Define $D \colon \Gamma \times Z \to \RR^+$ by
	$ D(\gamma, z) = \dleaf_\gamma(z) $.
For $\lambda, \gamma \in \Gamma$, we see from the Chain Rule that 
	\begin{align} \label{muXChainRule}
	 D(\lambda\gamma, z) = D(\gamma, z) \, D(\lambda, \gamma z)  
	 . \end{align}
Since $\mu_Z$ is $\mu_\Gamma$-stationary \csee{nuGammaStationary}, this implies
	\[ \sum_{\lambda \in \Gamma} \mu_\Gamma(\lambda) \, D(\lambda\gamma, z) 
		= D(\gamma, z) \sum_{\lambda \in \Gamma} \mu_\Gamma(\lambda) \,D(\lambda, \gamma z)
		= D(\gamma, z) \cdot 1
		= D(\gamma, z)
		. \]
This means that the function $\gamma \mapsto D(\gamma, z)$ is $\mu_\Gamma$-harmonic, and therefore extends to a unique $\mu_G$-harmonic function on~$G$. Thus, we obtain a function $D_G \colon G \times Z \to \RR^{\ge0}$. (This function is $\mu_G$-harmonic in the $G$-coordinate, and defined for $\mu_Z$-a.e.~$z$.) Let 
	\[ \widetilde{\mu_X} = D_G \cdot (m_G \times \mu_Z) , \]
so $\widetilde{\mu_X}$ is a measure on $G \times Z$ that is in the measure class of $m_G \times \mu_Z$. 

For any $g \in G$, the uniqueness of~$D_G$ implies that~\pref{muXChainRule} still holds if we replace $D$ with~$D_G$, and replace $\lambda$ with~$g$.
Then for $\gamma \in \Gamma$ (acting on the right via $(g,z) \mapsto (g \gamma, \gamma^{-1} z)$), we can use the fact that $d \gamma^{-1}_* \mu_Z/d\mu_Z = D_G(\gamma, z)$ \csee{RadonNikodym} to conclude that
	\[ \frac{ d \gamma_* \widetilde{\mu_X} }{d \widetilde{\mu_X}}(g,z)
		= \frac{D_G(g \gamma^{-1}, \gamma z)}{D_G(g,z)} \cdot \frac{d \gamma_* m_G}{dm_G}(g) \cdot  \frac{d \gamma^{-1}_* \mu_Z}{d\mu_Z} (z)
		= \frac{D_G(g \gamma^{-1}, \gamma z)}{D_G(g,z)} \cdot 1 \cdot D_G(\gamma, z)
		= 1
		. \]
This means that $\widetilde{\mu_X}$ is invariant for the action of~$\Gamma$, so we can construct a well-defined measure~$\mu_X$ on~$X$ from~$\widetilde{\mu_X}$, by restricting the quotient map $G \times Z \to X$ to any fundamental domain of the action.
The harmonicity of~$D$ in the $G$-coordinate implies that $\mu_X$ is $\mu_G$-stationary. 

The advantage of this approach is that it shows $\mu_X$ is in the same measure class as $m_G \times \mu_Z$ (restricted to a fundamental domain).
\end{rem}

\end{document}